\newtheorem{definition}{Definition}[section]
\newtheorem{theorem}[definition]{Theorem}
\newtheorem{proposition}[definition]{Proposition}
\newtheorem{lemma}[definition]{Lemma}
\renewcommand{\theta}{\vartheta}
\newcommand{\T}{{\mathbb T}}
\newcommand{\R}{{\mathbb R}}
\newcommand{\N}{{\mathbb N}}
\renewcommand{\phi}{\varphi}
\begin{document}

\title[Periodic orbits of non-exact oscillating magnetic fields on surfaces]{Infinitely many periodic orbits in non-exact oscillating magnetic fields on surfaces with genus at least two for almost every low energy level}

\author{Luca Asselle}
\address{Ruhr-Universit\"at Bochum, Fakult\"at f\"ur Mathematik, NA 4/35, D-44780 Bochum, Germany}
\email{\href{mailto:luca.asselle@ruhr-uni-bochum.de}{luca.asselle@ruhr-uni-bochum.de}}
\author{Gabriele Benedetti}
\address{WWU M\"unster, Mathematisches Institut, Einsteinstrasse 62, D-48149 M\"unster, Germany}
\email{\href{mailto:benedett@uni-muenster.de}{benedett@uni-muenster.de}}

\subjclass[2010]{37J45, 58E05}

\keywords{Dynamical systems, Periodic orbits, Symplectic geometry, Magnetic flows}
\date{\today}
\begin{abstract}
In this paper we consider oscillating non-exact magnetic fields on surfaces with genus at least two and show that for almost every energy level $k$ below a certain value $\tau_+^*(g,\sigma)$ less than or equal to the \textit{Ma\~n\'e critical value of the universal cover} there are infinitely many closed magnetic geodesics with energy $k$.
\end{abstract}

\maketitle

\section{Introduction}

Let $(M,g)$ denote a closed connected orientable Riemannian surface with genus at least two. Let $\sigma\in \Omega^2(M)$ be a non-exact 2-form, let $\omega_g$ denote the standard symplectic form on $TM$ obtained by pulling back the canonical symplectic form $dp\wedge dq$ on $T^*M$ via the Riemannian metric and let 
\[\omega:=\ \omega_g + \pi^*\sigma \]
be the \textit{twisted symplectic form} determined by the pair $(g,\sigma)$. Let $E:TM\rightarrow \R$ denote the energy Hamiltonian 
\[E(q,v)\ =\ \frac{1}{2}\, \|v\|_q^2\]
and $\phi_t:TM\rightarrow TM$ represent the flow of the symplectic gradient of $E$ with respect to $\omega$. We call $\phi_t$ the \textit{magnetic flow} of the pair $(g,\sigma)$. The reason for this terminology is that $\phi_t$ can be thought of as modeling the motion of a particle of unit mass and charge under the effect of a magnetic field represented by the 2-form $\sigma$. 

The aim of this paper is to generalize the main theorem in \cite{AMMP14} to the non-exact oscillating case, that is to prove the existence of infinitely many closed periodic orbits on almost every low energy level 
without any additional non-degeneracy assumption (such an assumption is needed in \cite{AMP13} for the exact case).
More precisely, to any magnetic pair $(g,\sigma)$ we can associate the energy value $\tau_+^*(g,\sigma)$, which is strictly positive if and only if the 2-form $\sigma$ changes sign (see Lemma \ref{lem:tau}). 
A $2$-form $\sigma$ of this kind is called \textit{oscillating}. Then the following theorem holds.

\begin{theorem}
Let $(M,g)$ be a closed orientable Riemannian surface of genus at least two and let $\sigma\in \Omega^2(M)$ be a non-exact oscillating $2$-form. For almost every $k\in (0,\tau_+^*(g,\sigma))$ 
there are infinitely many geometrically distinct closed magnetic geodesics with energy $k$.
\label{mainteo}
\end{theorem}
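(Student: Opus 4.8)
\emph{Strategy.} I would follow the variational scheme of Contreras, developed for exact magnetic systems in \cite{AMMP14}, but rebuilt so as not to require a global primitive of $\sigma$. The first step is the variational reduction. Since $M$ has genus at least two, $\pi_2(M)=0$, so for every contractible loop $\gamma$ the $\sigma$-area $A(\gamma):=\int_D\sigma$ is independent of the choice of a disk $D$ bounded by $\gamma$; hence
\[
S_k(\gamma,T)\ =\ \frac{1}{2T}\int_0^1|\dot\gamma|^2\,dt\ +\ kT\ -\ A(\gamma)
\]
is a well-defined single-valued functional on the Hilbert manifold $\mathcal M$ of pairs $(\gamma,T)$, $\gamma$ a contractible $W^{1,2}$-loop and $T>0$, and its critical points with $T>0$ are exactly the periodic orbits of $\phi_t$ of energy $k$, possibly iterated. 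Thus everything reduces to producing, for almost every $k\in(0,\tau_+^*(g,\sigma))$, infinitely many geometrically distinct such critical points. Note that $S_k$ makes sense only on the component of contractible loops, the ``area'' being multivalued elsewhere; this is precisely why the part of \cite{AMMP14} that exploits the infinitely many non-contractible free-homotopy classes has to be replaced by something else in the non-exact case.

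\emph{The landscape of $S_k$.} Next I would describe the geometry of $S_k$ for $k<\tau_+^*$. Since $\sigma$ takes positive values, for small $k$ there is a ``waist'': a set of local minimizers of $S_k$ formed by short closed magnetic geodesics bounding a small $\sigma$-area, at a level $S_k(u_0)$ of order $k\cdot(\text{period})>0$. Since $\sigma$ changes sign, one can construct immersed disks that spiral around a region where $\sigma>0$, producing contractible loops of $S_k$-action as negative as one wishes; in particular $\inf_{\mathcal M}S_k=-\infty$, so the waist is not a global minimum. The content of Lemma \ref{lem:tau} and of the very definition of $\tau_+^*$ is that this ``downhill at infinity'' region is non-empty and separated from the waist by a genuine barrier exactly when $k<\tau_+^*$. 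This yields a mountain-pass geometry; iterating the spiralling construction produces a whole sequence of minimax classes with values $S_k(u_0)\le c_1(k)\le c_2(k)\le\cdots\to+\infty$. Finally, the Palais--Smale condition for $S_k$ fails only along sequences with $T\to 0$ or $T\to\infty$, and holds as soon as the periods stay in a fixed compact subinterval of $(0,\infty)$.

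\emph{Struwe monotonicity.} Each map $k\mapsto c_m(k)$ is monotone, because $\partial_kS_k=T>0$, hence differentiable off a null set; deleting a countable union of null sets leaves a full-measure set $\mathcal K\subset(0,\tau_+^*)$ at whose points all the $c_m$ are differentiable. For $k\in\mathcal K$, the differentiability of $c_m$ at $k$ yields, by the monotonicity argument of Struwe as adapted to magnetic flows in \cite{AMMP14}, a Palais--Smale sequence for $S_k$ at level $c_m(k)$ whose periods are uniformly bounded above and below, which therefore subconverges to a periodic orbit of energy $k$ at action level $c_m(k)$. In particular, for every $k\in\mathcal K$ one gets at least one periodic orbit strictly above the waist --- and, unlike in \cite{AMP13}, no non-degeneracy assumption on $(g,\sigma)$ is required, the full-measure set of energies playing that role.

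\emph{From one orbit to infinitely many: the main obstacle.} Fix $k\in\mathcal K$ and assume, for contradiction, that $\phi_t$ has only finitely many geometrically distinct periodic orbits of energy $k$. If one of them, say $\gamma$, is a local minimizer of $S_k$, then it is isolated (finitely many orbits) and has positive action, and a Bangert--Hingston-type iteration argument applies: the iterates $\gamma^{j}$ are local minimizers at the diverging levels $jS_k(\gamma)$, consecutive basins are separated by mountain passes, and these mountain passes give infinitely many distinct orbits --- a contradiction. If no orbit is a local minimizer, then every orbit is a saddle with a descending direction, and a Morse-theoretic count gives the contradiction: by the previous step the infinitely many distinct numbers $c_m(k)\to\infty$ are all critical values, whereas the local homology of the finitely many orbits and of their iterates (controlled by the Gromoll--Meyer theorem) is far too sparse to realise all of them. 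Making this last step rigorous is the heart of the matter, and the point where non-exactness bites: one must stay on the contractible component throughout, show that the iterated minimax classes there genuinely have unbounded critical values, check that the Bangert--Hingston mechanism survives in the non-reversible magnetic setting, and verify that Lemma \ref{lem:tau} pins down exactly the interval $(0,\tau_+^*(g,\sigma))$ on which this entire picture holds.
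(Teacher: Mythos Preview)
Your outline has the right skeleton---free-period action functional, local minimizer, minimax over iterates, Struwe monotonicity for almost-every-$k$ compactness, contradiction assuming finitely many orbits---but two load-bearing pieces are wrong. First, the claim that $S_k$ is single-valued only on contractible loops is false: since $c(g,\sigma)<\infty$ on genus $\geq 2$ (Lemma~\ref{manfin}), Merry's lemma gives $\int f^*\sigma=0$ on every $2$-torus, so the cylinder construction defines $S_k$ on every $\Lambda_\nu$. More importantly, your description of the landscape is inverted. The local minimizer is not a ``short loop near a point where $\sigma>0$'' with small positive action; it is produced by Taimanov's theorem (Theorem~\ref{Taimanov92}) via the functional $\mathcal T_k$ on embedded surfaces, and when it is contractible it has \emph{negative} action (Lemma~\ref{localminimizers}). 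The second valley is a loop $\mu$ with $S_k(\mu)<S_k(\alpha_{k_*})<0$, and by Bangert's homotopy (Lemma~\ref{Bangert80}) the minimax values satisfy $c_n(k)\to -\infty$, not $+\infty$.

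Second, and fatally, your contradiction mechanism does not work. You argue that if only finitely many prime orbits exist, then the ``sparse'' local homology of their iterates cannot realise infinitely many distinct critical values $c_m(k)\to\infty$. But the iterates of finitely many orbits already produce infinitely many critical values $nS_k(\gamma_j)$, so the set of critical values is not sparse at all; distinctness of the $c_m$ buys nothing. The paper's mechanism is different: one proves (Theorem~\ref{iterationofmountainpasses}) that every sufficiently high iterate $\gamma^n$ fails to be a \emph{mountain-pass} point---either $\operatorname{ind}(\gamma^n)\geq 2$ and the sublevel near $\gamma^n$ is connected, or $\operatorname{ind}(\gamma^n)=0$ and Bangert's pulling-one-loop-at-a-time homotopy connects the sublevel components. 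One then picks $n$ large enough (using $c_n\to-\infty$ in the contractible case, or a free-homotopy-class count otherwise) so that every orbit at level $c_n(k)$ is a high iterate, applies the Struwe lemma (Lemma~\ref{lem:str}) to get a path $v\in\mathcal P_n(k)$ inside $\{S_k<c_n(k)\}\cup\mathcal W$, and then pushes $v$ entirely into $\{S_k<c_n(k)\}$, contradicting the definition of $c_n(k)$.
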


\vspace{3mm}

In \cite{AMMP14} the same result has been shown when the 2-form $\sigma$ is exact. There the energy interval $(0,\tau_+^*(g,\sigma))$ is replaced by the interval $(0,c_u(L))$, where $c_u(L)$ denotes the Ma\~{n}\'e critical value of the universal cover of the Lagrange function $L:TM\rightarrow \R$ given by $L(q,v) = \|v\|_q^2/2 + \lambda_q(v),$ where $\lambda\in\Omega^1(M)$ is a primitive of $\sigma$. The Lagrangian $L$ gives rise to an associated free-period action functional 
\[S_k:\Lambda \longrightarrow \R\, , \ \ \ \ (x,T)\ \longmapsto\ T\, \int_0^1 \Big [ L\Big (x(t),\frac{\dot x(t)}{T}\Big ) + k \Big ]\, dt\, ,\]

\noindent on the space $\Lambda:= H^1(\T,M)\times (0,+\infty)$. The interest in this functional relies on the fact that its critical points are exactly the closed magnetic geodesics with energy $k$. The properties of $S_k$ are very different according to whether $k$ is supercritical or subcritical (for a beautiful survey, see \cite{Abb13}).

For $k>c_u(L)$, there exists a global minimizer in every non-trivial free homotopy class since $S_k$ is bounded from below on a fixed free homotopy class and it satisfies the Palais-Smale condition \cite{CIPP00}.

For $k<c_u(L)$, the functional is not bounded from below on each free homotopy class and there are examples where the Palais-Smale property fails. However, following Contreras' general result on autonomous Tonelli Lagrangian systems \cite{Con06}, in the subcritical range $S_k$ has a mountain-pass geometry on the space of contractible loops. The two valleys are represented by the constant loops and by the loops with negative action. Moreover, for the specific case of exact magnetic flows on surfaces the results of Taimanov \cite{Tai92b,Tai92a,Tai93} and independently of Contreras, Macarini and G. Paternain \cite{CMP04} imply that, for $k$ in this range, there exists a local minimizer $\alpha_k$ for $S_k$. If we suppose further that the minimizer is strict, then $S_k$ has a mountain-pass geometry for every iterate $\alpha^n_k$ of $\alpha_k$, since $\alpha^n_k$ is still a strict local minimizer. For every $n\geq1$ the two valleys are a small neighborhood of $\alpha^n_k$ and the set of loops in the same free 
component of $\alpha^n_k$ with action less than $S_k(\alpha^n_k)$. 
Therefore, critical points of mountain-pass type do exist provided one can prove the convergence of the Palais-Smale sequences associated to the minimax values. An argument originally due to Struwe \cite{Str90} (and used already for the free-period action functional by Contreras \cite{Con06}) ensures such convergence on almost every energy level below $c_u(L)$ exploiting the fact that the functional $S_k$ is monotone in $k$ (see \cite{AMMP14}). Combining this result with a study of the behaviour of the index of a critical point under iteration, the analogue of Theorem \ref{mainteo} for exact magnetic flows on subcritical energy levels has been proved in \cite{AMMP14}. 
\medskip

In the present paper we aim to prove Theorem \ref{mainteo} exactly by generalizing the scheme of proof carried out in \cite{AMMP14} to our setting. The first difficulty is that in the non-exact case the Lagrangian action functional is not available anymore, because the closed 2-form $\sigma$ does not admit a primitive. However, we can define a closed $1$-form $dS_k$ on $\Lambda$, whose vanishing points still correspond to magnetic geodesics. In general, the direct study of the vanishing set of the $1$-form would require to apply the methods of Novikov theory for multivalued functionals to this infinite-dimensional setting, but no result has been obtained so far by pursuing this strategy.

The main idea to overcome this problem in our case is due to Will Merry \cite{Mer10}. He shows that, if the integral of $\sigma$ vanishes on every $2$-torus $\mathbb T^2\rightarrow M$, the closed $1$-form $dS_k$ is actually exact on $\Lambda$ and a primitive $S_k:\Lambda\rightarrow \R$ can be explicitly written. Merry proved that the integrals of $\sigma$ on $2$-tori vanish provided the \textit{Ma\~{n}\'e critical value of the universal cover} $c(g,\sigma)\in \R\cup \{+\infty\}$ associated to the magnetic pair $(g,\sigma)$ is finite \cite[Lemma 2.2]{Mer10}. Since on surfaces of genus at least two the Ma\~{n}\'e critical value is always finite (see Lemma \ref{manfin}), we have the existence of the functional $S_k$ in our case. We recall its basic properties in relation to the Palais-Smale condition in Section \ref{sec:pal}.

Moreover, $S_k$ has locally (in particular near a critical point) the same structure as a Lagrangian action functional (with a primitive $\theta$ not defined on the whole $M$). Therefore, the local theory is the same as in the exact case: iterates of (strict) local minimizers are still (strict) local minimizers (Lemma \ref{persistenceoflocalminimizers}) and the Morse index of the critical points satisfies the same iteration properties as described in \cite[Section 1]{AMP13} and in \cite{AMMP14}. In particular, as shown in \cite{AMMP14} for the exact case, a sufficiently high iterate of a periodic orbit cannot be a mountain pass critical point of $S_k$ (see Section \ref{sec:ite} for further details).

We restrict our study to energy values below $\tau^*_+(g,\sigma)$ since, in this range, $S_k$ is not bounded from below and there exists a local minimizer $\alpha_k$ of $S_k$ by \cite{Tai93} or \cite{CMP04} (see Section \ref{sec:min}). In Section \ref{sec:minmax} we will use the existence of this local minimizer and its iterates to construct a sequence of minimax values, which allows us to prove Theorem \ref{mainteo}.

We shall mention that in \cite{GGM14} it has been proved that, for surfaces of genus at least two with a nowhere-vanishing magnetic field, the twisted geodesic flow has infinitely many periodic orbits on every low energy level. There the fact that for nowhere-vanishing magnetic fields the energy levels are of contact type for low energies was essentially used. Therefore, this kind of result cannot be applied here, since as we will see in Proposition \ref{nocontact}, low energy levels are never of contact type for oscillating $2$-forms.

Finally, we briefly point out where in our argument it is crucially needed that the base manifold $M$ has dimension two. 

First, the existence of a local minimizer for $S_k$ is so far only known when $M$ is a surface and relies essentially on the fact that in dimension two, one can define an analogue of the functional $S_k$ called $\mathcal T_k$ on the space of oriented multi-curves, that are the boundary of surfaces embedded in $M$ (see Section \ref{sec:min}). Moreover, the critical points of the new functional are still multi-curves whose components are magnetic geodesics with energy $k$.
Although $S_k$ is not bounded from below on the connected components of the free loop-space, the functional $\mathcal T_k$ is and one might try to apply the direct method of the calculus of variation to find its critical points. The space of embedded multi-curves is however not the best to work with and that is the reason why Taimanov in \cite{Tai92b,Tai92a,Tai93} considered the restriction of $\mathcal T_k$ to the space of the so-called \textit{films}, whilst Contreras, Macarini and Paternain in \cite{CMP04} took a suitable extension of $\mathcal T_k$ to the space of integral 2-currents.

Second, the fact that (strict) local minimizers for $S_k$ remain (strict) local minimizers also when iterated is in general true only on orientable surfaces (however, one can prove Theorem \ref{mainteo} also for non-orientable surfaces by working on the orientation double covering). 
The interested reader may look at \cite{Hed32} for a counterexample in dimension greater than two and at \cite[Example 9.7.1]{KH95} for a counterexample in the non-orientable case. 

These are two of the reasons why in dimension bigger than two much less is known about the existence of closed magnetic geodesics. One of the most general results in higher dimension is the existence of one contractible periodic orbit on \textit{almost} every low energy level if $\sigma\neq0$ \cite{Sch06}. Existence for \textit{every} low energy can be obtained by requiring $\sigma$ to be symplectic \cite{GG09,Ush09}. Finally, multiplicity results hold if $\sigma$ is a K\"ahler form \cite{Ker99}. We also refer to the beautiful survey \cite{Abb13}, or to \cite{Con06}, for a discussion of the dynamics of exact magnetic forms in any dimension within the abstract framework of Tonelli Lagrangians.

\subsection*{Acknowledgements}
We are very grateful to our PhD advisors, A. Abbondandolo and G. P.\ Paternain respectively, for many fruitful discussions. We warmly thank the referee for carefully reading the draft and for suggesting various modifications which allowed us to improve the paper.


\section{Preliminaries}\label{sec:pre}

The contents of Sections 2--5 hold more generally for any closed connected Riemannian manifold $(M,g)$. Set  
\[\Lambda := \ H^1(\T,M)\times (0,+\infty).\]

We denote by $\widetilde {\Lambda}$ the same object on the universal cover $\widetilde M$ and by $p:\widetilde {\Lambda} \rightarrow \Lambda$ the projection map.
The space $\Lambda$ represents the set of loops with arbitrary period through the identification

\[(x,T)\in\Lambda\ \ \ \Longleftrightarrow\ \ \ \gamma:[0,T]\rightarrow M\, , \ \ \gamma(t):=x\big(t/T\big)\,.\]

For every free homotopy class of loops $\nu\in[\T,M]$, let $\Lambda_\nu$ be the connected component of $\Lambda$ given by loops in the class $\nu$ and denote by $\Lambda_0$ the connected component of contractible loops. We endow $\Lambda$ with the structure of a Hilbert manifold given by the product metric. The space $\Lambda$ carries a natural $\N$-action given by iteration
\begin{equation}
\psi:\N\times \Lambda \longrightarrow \Lambda\, , \ \ \ \ (n,(x,T))\longmapsto \psi(n,(x,T)) = \psi^n (x,T) := (x^n,nT)\,,
\label{iterationmap}
\end{equation}

\noindent where $x^n(t):=x(nt)$. In particular, for every class $\nu$, there exists a unique class $\nu^n$ such that $\psi^n(\Lambda_{\nu})\subset \Lambda_{\nu^n}$ and 
$\psi^n:\Lambda_\nu\rightarrow\Lambda_{\nu^n}$ is an embedding of Hilbert manifolds.

Let $\sigma\in \Omega^2(M)$ be a closed 2-form and let $\tilde \sigma$ be its pull-back to $\widetilde M$. We assume $\sigma$ to be \textit{weakly exact}, in the sense that $\tilde \sigma$ is exact (this is equivalent to requiring that $\sigma |_{\pi_2(M)}=0$). 
Denote by $\phi_t:TM\rightarrow TM$ the magnetic flow of the pair $(g,\sigma)$, i.e.\ the flow of the symplectic gradient of the Hamiltonian given by the kinetic energy with respect to the twisted symplectic form 
\[\omega := \ \omega_g + \pi^* \sigma\, ,\]

\noindent where $\omega_g$ is the pull-back on $TM$ of the standard symplectic form on $T^*M$ via the Riemannian metric and $\pi:TM\rightarrow M$ is the tangent bundle. We are interested in finding periodic orbits for $\phi_t$. 
We call $\gamma$ a \textit{closed magnetic geodesic} of the pair $(g,\sigma)$ if it is the projection under $\pi$ of a periodic orbit of $\phi_t$.

Now, fix a primitive $\theta$ of $\tilde \sigma$ and consider the Lagrangian 
\[L:T\widetilde M \longrightarrow \R\, ,\ \ \ \ L(q,v) := \ \frac{1}{2}\, \|v\|_q^2 + \theta_q (v)\, .\]

The Euler-Lagrange flow of $L$ is precisely the lifted flow $\tilde{\phi}_t:T\widetilde M \rightarrow T\widetilde M$ of the magnetic flow $\phi_t:TM\rightarrow TM$ (see for instance \cite{CI99}). 
Consider the action functional $\widetilde S_k:\widetilde{\Lambda} \rightarrow\R$ defined by
\begin{equation*}
\widetilde S_k(\tilde x,T)\ :=\ T\, \int_{\T} \Big [L\Big (\tilde x(t),\frac{\dot {\tilde x}(t)}{T}\Big ) + k \Big ]\, dt
\label{ak}
\end{equation*}

\noindent and define the \textit{Ma\~{n}\'e  critical value} of the pair $(g,\sigma)$ as 
\begin{equation*}
c(g,\sigma):= \ \sup \Big \{  k\in \R \ \Big |\ \exists \ \tilde \gamma \in \widetilde{\Lambda} \ \ \text{s.t.} \ \ \widetilde S_k (\tilde \gamma) < 0\, \Big \}\ \in \ \R \cup \{+\infty\}\, ,
\end{equation*}

\noindent where as usual we make the identification $\tilde \gamma = (\tilde x,T)$. The next lemma implies that $c(g,\sigma)$ is finite if and only if the lift $\tilde \sigma$ of $\sigma$ to the universal cover 
admits a bounded primitive (see \cite{BP02} for a proof).

\begin{lemma}\label{lemma2.1}
We can alternatively define $c(g,\sigma)$ as follows 
\begin{equation}
c(g,\sigma) \ =\ \inf_{u\in C^\infty(\widetilde M)} \sup_{q\in \widetilde M} \frac{1}{2}\, \| d_qu - \theta_q\|^2\, .
\label{alternativedefinition}
\end{equation}
\end{lemma}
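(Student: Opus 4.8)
The plan is to establish the two inequalities separately. Write $c_1:=\inf_{u\in C^\infty(\widetilde M)}\sup_{q\in\widetilde M}\tfrac12\|d_qu-\theta_q\|^2$. The inequality $c(g,\sigma)\le c_1$ is elementary and comes from completing the square in $\widetilde S_k$; the inequality $c(g,\sigma)\ge c_1$ is the substantial one and is proved by producing a near-optimal competitor $u$ out of the Ma\~n\'e action potential on $\widetilde M$.

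For $c(g,\sigma)\le c_1$: fix $u\in C^\infty(\widetilde M)$ with $m:=\sup_q\tfrac12\|d_qu-\theta_q\|^2<+\infty$ and any $k>m$. For $(\tilde x,T)\in\widetilde\Lambda$ one has $\widetilde S_k(\tilde x,T)=\int_{\T}\big[\tfrac{1}{2T}\|\dot{\tilde x}\|^2+\theta_{\tilde x}(\dot{\tilde x})+kT\big]\,dt$, and since $\tilde x$ is a loop $\int_{\T}d_{\tilde x}u(\dot{\tilde x})\,dt=0$, so the middle term may be replaced by $(\theta_{\tilde x}-d_{\tilde x}u)(\dot{\tilde x})$. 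Bounding this from below by $-\,\tfrac T2\|\theta_{\tilde x}-d_{\tilde x}u\|^2-\tfrac{1}{2T}\|\dot{\tilde x}\|^2$ (Young's inequality), the kinetic terms cancel and $\widetilde S_k(\tilde x,T)\ge\int_{\T}T\big(k-\tfrac12\|\theta_{\tilde x}-d_{\tilde x}u\|^2\big)\,dt\ge T(k-m)>0$. Hence no element of $\widetilde\Lambda$ has negative $k$-action, so $c(g,\sigma)\le k$; letting $k\downarrow m$ and then taking the infimum over $u$ gives $c(g,\sigma)\le c_1$ (the statement being vacuous if no such $u$ exists).

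For $c(g,\sigma)\ge c_1$: fix $k>c(g,\sigma)$, so that by definition every loop in $\widetilde\Lambda$ has non-negative $k$-action. Introduce the action potential $\Phi_k(x,y):=\inf\big\{\int_0^T[\tfrac12\|\dot\xi\|^2+\theta_\xi(\dot\xi)+k]\,dt\ :\ \xi:[0,T]\to\widetilde M,\ \xi(0)=x,\ \xi(T)=y,\ T>0\big\}$. Concatenation of curves yields the triangle inequality $\Phi_k(x,z)\le\Phi_k(x,y)+\Phi_k(y,z)$, while the hypothesis on $k$ forces $\Phi_k(p,p)\ge0$ for every $p$; since $\Phi_k(x,y)<+\infty$ trivially for all $x,y$, the triangle inequality then forces $\Phi_k$ to be finite everywhere. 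Fix $x_0\in\widetilde M$ and set $u:=\Phi_k(x_0,\cdot)$. Comparing $\Phi_k$ along short minimizing geodesics, on which $\theta$ is bounded, shows $u$ is locally Lipschitz, hence differentiable almost everywhere, and the triangle inequality yields the domination property $u(\xi(b))-u(\xi(a))\le\int_a^b[\tfrac12\|\dot\xi\|^2+\theta_\xi(\dot\xi)+k]\,dt$ for every curve $\xi$. Evaluating this on a short curve leaving a differentiability point $q$ with velocity $v$ and letting the time tend to $0$ gives $d_qu(v)-\theta_q(v)-\tfrac12\|v\|_q^2\le k$ for every $v\in T_q\widetilde M$; the supremum over $v$ of the left-hand side is the Fenchel dual of the kinetic term, namely $\tfrac12\|d_qu-\theta_q\|_q^2$, so $\tfrac12\|d_qu-\theta_q\|^2\le k$ almost everywhere. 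A standard mollification—in charts where $\theta$ is exact, smoothing the corresponding Lipschitz potential and patching via a partition of unity—then yields, for each $\varepsilon>0$, a genuinely smooth $\bar u$ with $\sup_q\tfrac12\|d_q\bar u-\theta_q\|^2\le k+\varepsilon$, whence $c_1\le k+\varepsilon$; letting $\varepsilon\downarrow0$ and $k\downarrow c(g,\sigma)$ gives $c_1\le c(g,\sigma)$.

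The main obstacle is the second inequality: it requires the whole action-potential package—finiteness, local Lipschitz regularity, the domination property, and the Legendre/Fenchel identification—carried out on the \emph{non-compact} cover $\widetilde M$, which is exactly why $\sup_{q\in\widetilde M}$ is a genuine supremum rather than a maximum; the final smoothing in the presence of the non-exact term $\theta$ is routine but must be handled with some care. All of this is classical, so in the write-up I would either reproduce the argument in this setting or simply defer to \cite{BP02}.
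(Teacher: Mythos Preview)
The paper does not give its own proof of this lemma: it simply states the result and refers the reader to \cite{BP02}. Your proposal correctly sketches the standard argument from that reference (completing the square for one inequality, the Ma\~n\'e action potential plus smoothing for the other), and you yourself note at the end that one could just defer to \cite{BP02}; so there is nothing to compare—your write-up is already strictly more detailed than what the paper does.
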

Following \cite{Mer10} we now introduce a second functional $S_k:\Lambda\rightarrow\R$ when $c(g,\sigma)<\infty$. The key observation is the following

\begin{lemma}
If $c(g,\sigma)<\infty$, then for any $f:\T^2\rightarrow M$ smooth, $f^*\sigma$ is exact.
\label{supercazzola}
\end{lemma}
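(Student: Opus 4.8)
The plan is to show that the cohomology class $[f^*\sigma] \in H^2(\T^2;\R)$ vanishes; since $H^2(\T^2;\R) \cong \R$ is detected by integration, it suffices to prove $\int_{\T^2} f^*\sigma = 0$ for every smooth $f:\T^2 \to M$. The starting point is the hypothesis $c(g,\sigma) < \infty$, which by Lemma \ref{lemma2.1} (or directly by \cite{BP02}) is equivalent to the statement that the lift $\tilde\sigma = d\theta$ to the universal cover $\widetilde M$ admits a \emph{bounded} primitive, i.e.\ there is a primitive $\theta$ of $\tilde\sigma$ with $\sup_{q \in \widetilde M}\|\theta_q\| =: C < \infty$.

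First I would reduce to the universal cover. Fix a basepoint and lift $f$: since $\T^2$ is not simply connected one cannot lift $f$ itself, but one can pass to the universal cover of the torus. Precompose $f$ with the covering $\R^2 \to \T^2$ and restrict to the fundamental square; more precisely, for each integer $n \geq 1$ consider the map $f_n : \T^2 \to M$ obtained by precomposing $f$ with the degree-$n^2$ self-covering $z \mapsto n\cdot z$ of $\T^2$ (multiplication by $n$ on each factor). Then $f_n^*\sigma = $ (the obvious pullback) satisfies $\int_{\T^2} f_n^*\sigma = n^2 \int_{\T^2} f^*\sigma$, because the covering has degree $n^2$. The key point is now to bound $\left|\int_{\T^2} f_n^*\sigma\right|$ \emph{linearly} in $n$, which forces the integral to be zero. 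To get such a bound, lift the square $[0,n]^2 \subset \R^2$ (a fundamental domain for $f_n$ viewed through $\R^2$) into $\widetilde M$: the map $F : [0,n]^2 \to M$, $F = f \circ (\text{projection})$, lifts to $\widetilde F : [0,n]^2 \to \widetilde M$ since $[0,n]^2$ is simply connected. By Stokes on $[0,n]^2$,
\[
\int_{\T^2} f_n^*\sigma \;=\; \int_{[0,n]^2} \widetilde F^*\tilde\sigma \;=\; \int_{[0,n]^2} \widetilde F^* d\theta \;=\; \int_{\partial [0,n]^2} \widetilde F^*\theta .
\]
The boundary $\partial[0,n]^2$ has length $4n$ (after reparametrization, the four sides are curves in $M$ of the form $t \mapsto f(\ldots)$ traversed at unit-ish speed, so their lifts have $\widetilde F$-derivative bounded by a constant $K$ depending only on $f$), hence
\[
\left| \int_{\T^2} f_n^*\sigma \right| \;=\; \left| \int_{\partial [0,n]^2} \widetilde F^*\theta \right| \;\leq\; C \cdot K \cdot 4n .
\]
Combining with $\int_{\T^2} f_n^*\sigma = n^2 \int_{\T^2} f^*\sigma$ gives $n^2 \left|\int_{\T^2} f^*\sigma\right| \leq 4CKn$ for all $n \geq 1$, so $\left|\int_{\T^2} f^*\sigma\right| \leq 4CK/n \to 0$, whence $\int_{\T^2} f^*\sigma = 0$. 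Since $f$ was arbitrary and $H^2(\T^2;\R)$ is one-dimensional, $[f^*\sigma] = 0$, i.e.\ $f^*\sigma$ is exact.

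The main obstacle I expect is the bookkeeping around the boundary term: one must make sure the four edges of $\partial[0,n]^2$ actually glue up into a loop (or a union of loops) in $\widetilde M$ with length growing only like $n$, and that the ``edge speed'' constant $K$ does not itself depend on $n$. This is where one uses that $f_n$ is built from a \emph{single} fixed $f$ by precomposition with a linear covering, so the derivatives of $F$ on $[0,n]^2$ are uniformly controlled by those of $f$ on $\T^2$; the periodicity of $f$ under $\Z^2$ translations means the four sides are (up to orientation and translation) just finitely many copies of two fixed loops in $M$, whose lifts have uniformly bounded derivative. Once this uniformity is in place the argument is a clean ``quadratic growth versus linear growth'' contradiction. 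This is essentially Merry's argument \cite[Lemma 2.2]{Mer10}; I would follow it, citing \cite{BP02} for the equivalence of $c(g,\sigma)<\infty$ with the existence of a bounded primitive of $\tilde\sigma$.
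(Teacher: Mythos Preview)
The paper does not actually prove this lemma: immediately after the statement it says ``We refer to \cite{Mer10} and references therein, in particular \cite{Pat06}, for a proof and a discussion of this result.'' So there is nothing to compare against beyond that citation.

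Your proposal is a correct and essentially self-contained version of exactly the argument in \cite{Mer10}: bounded primitive on $\widetilde M$ plus the $n^2$-versus-$n$ scaling trick via Stokes on $[0,n]^2$. One small clarification on the ``main obstacle'' you flag: you do \emph{not} need the four lifted edges to close up into a loop in $\widetilde M$. Stokes is being applied on the domain $[0,n]^2$ (a compact manifold with boundary), so $\int_{[0,n]^2}\widetilde F^*d\theta=\int_{\partial[0,n]^2}\widetilde F^*\theta$ holds regardless of what the image of the boundary looks like upstairs. All you need is the pointwise bound $|\widetilde F^*\theta|\leq C\cdot|d\widetilde F|=C\cdot|dF|\leq C\cdot\sup_{\T^2}|df|$, and the last quantity is manifestly independent of $n$ because $F$ is just $f$ precomposed with the standard projection $\R^2\to\T^2$. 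With that in hand your linear-in-$n$ bound and the conclusion are clean.
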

We refer to \cite{Mer10} and references therein, in particular \cite{Pat06}, for a proof and a discussion of this result.

For every $\nu\in [\T,M]$ pick a reference loop $x_\nu$ in the connected component of $H^1(\T,M)$ corresponding to $\nu$. Given any other loop $x\in H^1(\T,M)$ in the same connected component, let $C(x)$ denote a cylinder connecting $x_\nu$ to $x$ and define $S_k:\Lambda_\nu \longrightarrow \R$ by 
\begin{equation}
S_k(x,T):= \ T\, \int_0^1\Big (\frac{1}{2T^2}\, |\dot x(t)|^2+k\Big )\, dt + \int_{C(x)}\sigma\,.
\label{primadefsn}
\end{equation}

\noindent This is well defined because the value of the integral of $\sigma$ over $C(x)$ is independent of the choice of the cylinder. Indeed, if $C'(x)$ is another cylinder with the same boundary, then 
\[\T^2(x) := \ C(x) \cup \overline{C'(x)}\]

\noindent is a torus (where $\overline{C'(x)}$ denotes the cylinder $C'(x)$ taken with opposite orientation) and hence the integral of $\sigma$ over $\T^2(x)$ is zero by Lemma \ref{supercazzola}. Observe that $S_k$ depends on the choice of the reference loop. However, if we change $x_\nu$ to $x'_\nu$, the functional only changes by the addition of a constant (the integral of $\sigma$ over a cylinder connecting $x_\nu$ and $x'_\nu$) and, hence, the geometric properties of $S_k$ are unchanged.

When $\nu=0$ is the class of contractible loops, we make the natural choice of taking as reference loop $x_0$ a fixed constant loop. Then, if $\gamma=(x,T)\in\Lambda_0$, the cylinder $C(x)$ is just a capping disc for $x$. As a consequence, if $\gamma'\in\Lambda$ is such that $\gamma'(0)=\gamma(0)$, then
\begin{equation}\label{eq:conc}
S_k(\gamma\ast\gamma')=S_k(\gamma)+S_k(\gamma')\,,
\end{equation}
where $\ast$ denotes the concatenation of loops.

Moreover, the functional $S_k\big|_{\Lambda_0}$ is related with $\widetilde S_k$ by the formula 
\[S_k(p(\tilde x,T))\ =\ \widetilde S_k(\tilde x,T)\, ,\]

\noindent so that $S_k|_{\Lambda_0}$ is unbounded from below if $k<c(g,\sigma)$. By Equation \eqref{eq:conc}, we get then the following result.

\begin{lemma}\label{lem:unb}
If $k<c(g,\sigma)$, then $S_k|_{\Lambda_\nu}$ is unbounded from below for every $\nu\in[\T,M]$.   
\end{lemma}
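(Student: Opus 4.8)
The plan is to reduce the general case to the contractible case, which is already handled by the identification $S_k \circ p = \widetilde S_k$ together with the finiteness hypothesis $k < c(g,\sigma)$. More precisely, I would fix an arbitrary class $\nu \in [\T,M]$ and a reference loop $x_\nu$, and take an element $\gamma_\nu = (x_\nu, T_\nu) \in \Lambda_\nu$ for some (any) choice of period $T_\nu > 0$; its value $S_k(\gamma_\nu)$ is some finite real number. The goal is to produce a sequence $\gamma^{(n)} \in \Lambda_\nu$ with $S_k(\gamma^{(n)}) \to -\infty$.

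The key step is to concatenate $\gamma_\nu$ with contractible loops of very negative action. Since $k < c(g,\sigma)$, Lemma \ref{lem:unb}'s contractible case (i.e.\ the unboundedness of $S_k|_{\Lambda_0}$, which follows from $S_k(p(\tilde x, T)) = \widetilde S_k(\tilde x, T)$ and the definition of $c(g,\sigma)$) gives a sequence $\delta^{(n)} = (y^{(n)}, \tau^{(n)}) \in \Lambda_0$ with $S_k(\delta^{(n)}) \to -\infty$. After reparametrizing $x_\nu$ and $y^{(n)}$ so that they are based at a common point $q_0 = x_\nu(0) = y^{(n)}(0)$ — which changes the action only by a controlled amount, in fact not at all if one is careful about the capping data, but in any case by a quantity independent of $n$ — I form the concatenation $\gamma^{(n)} := \gamma_\nu \ast \delta^{(n)} \in \Lambda_\nu$ (concatenation with a contractible loop does not change the free homotopy class). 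Then I invoke the concatenation formula \eqref{eq:conc}, $S_k(\gamma_\nu \ast \delta^{(n)}) = S_k(\gamma_\nu) + S_k(\delta^{(n)})$, so that $S_k(\gamma^{(n)}) = S_k(\gamma_\nu) + S_k(\delta^{(n)}) \to -\infty$, which is exactly what we need.

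The main point requiring a little care — and the step I expect to be the mildest obstacle — is that the concatenation formula \eqref{eq:conc} as stated in the excerpt is proved using the specific choice of a \emph{constant} reference loop for the contractible class $\Lambda_0$; one must check that it still applies when the first factor $\gamma_\nu$ lies in a nontrivial class $\nu$. This is true because the cylinder $C(x_\nu \ast y)$ can be taken to be $C(x_\nu)$ (a chosen cylinder from the reference loop $x_\nu$ of the class $\nu$ to $x_\nu$ itself, hence from $x_\nu$ to the loop $x_\nu \ast y$ after gluing in a capping disc for $y$) glued with a capping disc $D(y)$ for $y$, and $\int_{D(y)} \sigma$ is precisely the topological term appearing in $S_k(\delta)$ when $\delta \in \Lambda_0$. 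The kinetic and potential ($k$) terms are manifestly additive under concatenation once the loops are parametrized on $[0,1]$ with the correct periods. Since $S_k(\gamma_\nu)$ is a fixed finite constant (independent of $n$), the divergence of $S_k(\delta^{(n)})$ forces the divergence of $S_k(\gamma^{(n)})$, completing the argument. Finally, if $\nu = 0$ there is nothing to prove, so one may assume $\nu \neq 0$ throughout, though the same argument works verbatim in either case.
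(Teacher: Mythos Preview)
Your proof is correct and is essentially the same as the paper's, which simply says the result follows from the concatenation identity \eqref{eq:conc} together with the already established unboundedness of $S_k|_{\Lambda_0}$. You have spelled out the details (base-point adjustment, why the capping data behaves additively) that the paper leaves implicit.
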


It is a straightforward computation to show that the functional $S_k$ is of class $C^2$ (actually smooth). We refer to \cite{Mer10} for the details.

The following lemma, contained in \cite[Corollary 2.3]{Mer10}, explains why we are interested in the functional $S_k$.

\begin{lemma}
A loop $\gamma\in\Lambda$ is a critical point for $S_k$ if and only if it is a magnetic geodesic with energy $k$. 
\end{lemma}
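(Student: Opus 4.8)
The plan is to compute the first variation of $S_k$ directly from the defining formula \eqref{primadefsn} and to match its vanishing locus with the two equations that characterise a closed magnetic geodesic of energy $k$. An alternative route is to use the fact, recalled after Lemma \ref{lem:unb}, that $S_k$ is locally modelled on a Tonelli free-period action functional with a locally defined primitive $\theta$ of $\sigma$, and then to invoke the classical computation for exact magnetic Lagrangians together with the identification of the Euler--Lagrange flow of $L$ with the lifted magnetic flow; but the direct computation is cleaner and self-contained, so I would present that.

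Concretely, I would fix $(x,T)\in\Lambda_\nu$ and a variation $s\mapsto(x_s,T_s)$ with $\partial_s x_s|_{s=0}=\xi$ a periodic section of $x^*TM$, letting $T_s$ vary freely. The kinetic-plus-$k$ part of \eqref{primadefsn} is the usual free-period functional: its variation in the $x$-direction is $-\tfrac1T\int_0^1 g(\nabla_t\dot x,\xi)\,dt$ after integrating by parts on the loop, and its $T$-derivative is $\int_0^1\!\big(k-\tfrac1{2T^2}|\dot x|^2\big)\,dt$. The term $\int_{C(x)}\sigma$ I would handle by a \emph{transgression} computation: with $h(s,t)=x_s(t)$, the cylinder $C(x_s)$ differs from $C(x)$ by the thin band $h([0,s]\times\T)$, so Stokes' theorem together with $d\sigma=0$ gives $\frac{d}{ds}\big|_{s=0}\int_{C(x_s)}\sigma=\int_0^1\sigma_x(\xi,\dot x)\,dt=-\int_0^1 g(Y_x\dot x,\xi)\,dt$, where $Y_q:T_qM\to T_qM$ is the Lorentz force defined by $\sigma_q(u,w)=g_q(Y_qu,w)$; this quantity is independent of the reference loop and of the chosen cylinder by Lemma \ref{supercazzola}, consistently with the discussion after \eqref{primadefsn}. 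Collecting the three contributions, $(x,T)$ is a critical point of $S_k$ precisely when $\nabla_t\dot x=-\,T\,Y_x(\dot x)$ and $\int_0^1|\dot x|^2\,dt=2kT^2$.

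The final step is to pass to the reparametrised curve $\gamma(t)=x(t/T)$, $t\in[0,T]$. Since $\dot\gamma=T^{-1}\dot x(\cdot/T)$ and $\nabla_t\dot\gamma=T^{-2}\nabla_t\dot x(\cdot/T)$, the first equation becomes the coordinate-free magnetic geodesic equation $\nabla_t\dot\gamma=-Y_\gamma(\dot\gamma)$ (the sign being a matter of our orientation conventions for $\sigma$ and $\omega$), i.e.\ $\gamma$ is the projection of a periodic orbit of $\phi_t$; conservation of energy along such $\gamma$ then forces $\tfrac12|\dot\gamma|^2$ to be constant, say $\equiv c$, and the second equation gives $cT^2=kT^2$, hence $c=k$. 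Running the computation backwards, I would check that $x(t)=\gamma(Tt)$ is a critical point of $S_k$ whenever $\gamma$ is a closed magnetic geodesic of period $T$ and energy $k$. The one genuinely non-formal point — the main obstacle — is the transgression identity for $\int_{C(x)}\sigma$, which is exactly where closedness of $\sigma$ is used; once that is in place, and granting the smoothness of $S_k$ recalled before the statement, everything else is elementary bookkeeping. This recovers \cite[Corollary 2.3]{Mer10}.
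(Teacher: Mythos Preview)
Your first-variation computation is correct, and the one genuinely non-formal step---the transgression identity for $\int_{C(x)}\sigma$---is handled properly via $d\sigma=0$ and the well-definedness discussion following \eqref{primadefsn}. The paper itself does not supply a proof of this lemma; it simply refers the reader to \cite[Corollary~2.3]{Mer10}. Your direct argument is therefore not a departure from the paper's approach but rather the content behind that citation, and it coincides with the standard computation (or, equivalently, with the local reduction to an exact Lagrangian functional via \eqref{eq:locex}--\eqref{eq:locac} that you mention as an alternative route).
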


The main result known so far about the critical points of $S_k$ in this generality is Theorem 1.1 of \cite{Mer10}, which we now state.

\vspace{2mm}

\begin{theorem}[W. Merry, 2010]\label{merteo}
Let $(M,g)$ be a closed connected Riemannian manifold and let $\sigma\in \Omega^2(M)$ be a closed weakly exact 2-form. Let $c:=c(g,\sigma)\in \R\cup \{+\infty\}$ denote the Ma\~{n}\'e critical value and $\phi_t$ the magnetic flow defined by $\sigma$. Suppose in addition that $c<\infty$. Then:

\begin{enumerate}
\item if $k>c$, for each non trivial free-homotopy class $\nu\in [\T,M]$, there is a closed orbit of $\phi_t$ with energy $k$ such that its projection to $M$ belongs to $\nu$;

\item for a.e.\ $k\in (0,c)$, there is a closed orbit of $\phi_t$ with energy $k$ such that its projection to $M$ is contractible.
\end{enumerate}

\end{theorem}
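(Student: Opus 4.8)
The plan is to produce the periodic orbit variationally from $S_k$ (resp.\ $\widetilde S_k$), splitting into the supercritical and subcritical regimes, whose variational structures are completely different; the first is handled by direct minimization, the second by a mountain-pass argument combined with Struwe's monotonicity trick.

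\medskip

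\noindent\emph{Supercritical case $k>c$ (assertion (1)).} Here I would obtain the orbit as a global minimizer of $S_k$ on the component $\Lambda_\nu$, with $\nu\ne 0$. Two things must be checked: (a) $S_k$ is bounded below and coercive on $\Lambda_\nu$, i.e.\ every minimizing sequence stays in a region $\{c_1\le T\le c_2\}$ with $\int_0^1|\dot x|^2\,dt$ bounded; (b) $S_k|_{\Lambda_\nu}$ satisfies the Palais--Smale condition. For (a), fix $\epsilon>0$ with $k-\epsilon>c$ and, using Lemma \ref{lemma2.1}, choose $u\in C^\infty(\widetilde M)$ so that the primitive $\theta':=\theta-du$ of $\tilde\sigma$ has $\|\theta'_q\|\le\sqrt{2(k-\epsilon)}$ everywhere; rewriting the cylinder term $\int_{C(x)}\sigma$ as a boundary integral of $\theta'$ on a lift and applying the elementary bound $\tfrac{1}{2T}\int_0^1|\dot x|^2\,dt+kT\ge\sqrt{2k}\,\mathrm{length}(x)$ (Cauchy--Schwarz and the arithmetic--geometric mean inequality in $T$) gives $S_k(x,T)\ge(\sqrt{2k}-\sqrt{2(k-\epsilon)})\,\mathrm{length}(x)-C_\nu$, which together with the positive lower bound for lengths in the non-trivial class $\nu$ yields the claimed coercivity and boundedness below; see \cite{BP02,Mer10} for the precise estimate. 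Claim (b) is the Palais--Smale property on non-contractible components in the supercritical range, due to \cite{CIPP00} in the exact case and extended to the weakly exact setting in \cite{Mer10}. A minimizing sequence is then a Palais--Smale sequence with bounded period, hence converges along a subsequence to a minimizer, which by \cite[Corollary 2.3]{Mer10} is a closed magnetic geodesic of energy $k$; it is non-constant and projects to a loop in $\nu$ since $\nu\ne 0$.

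\medskip

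\noindent\emph{Subcritical case (assertion (2)).} Now $S_k|_{\Lambda_0}$ is unbounded below by Lemma \ref{lem:unb}, so minimization fails and I would instead run the mountain-pass scheme of Contreras \cite{Con06}. First one verifies that $S_k|_{\Lambda_0}$ has a mountain-pass geometry: the constant loops form one valley (note $S_k(q,T)=kT\to 0$ as $T\to 0$), Lemma \ref{lem:unb} provides a second valley of loops with $S_k<0$, and a positive barrier separates them — for a short loop $x$ of length $\ell$ capped by a small disc the kinetic term is $\gtrsim\ell^2/2T$, the cylinder term is $O(\ell^2)$ and the potential term is $kT$, so optimizing over $T$ gives $S_k\gtrsim\sqrt{2k}\,\ell-C\ell^2>0$ for $\ell$ small. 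Hence the minimax value
\[ c_k\ :=\ \inf_{h\in\mathcal P}\ \max_{s\in[0,1]} S_k\big(h(s)\big)\ >\ 0 , \]
where $\mathcal P$ is the set of continuous paths in $\Lambda_0$ from a constant loop to a fixed loop of negative action, is a candidate critical value. The obstruction is that $S_k|_{\Lambda_0}$ need not satisfy Palais--Smale when $k<c$, since along a Palais--Smale sequence the period may blow up. This is resolved by Struwe's monotonicity trick \cite{Str90,Con06}: because $\partial_k S_k(x,T)=T>0$, the map $k\mapsto S_k$ is pointwise strictly increasing, so $k\mapsto c_k$ is monotone and therefore differentiable for almost every $k\in(0,c)$; at each such $k$ one extracts a Palais--Smale sequence at level $c_k$ whose periods are bounded in terms of the difference quotients of $c_\cdot$ at $k$, and Palais--Smale sequences with bounded period are precompact. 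Thus for a.e.\ $k\in(0,c)$ the value $c_k$ is attained at a critical point of $S_k|_{\Lambda_0}$, which by \cite[Corollary 2.3]{Mer10} is a closed magnetic geodesic of energy $k$ with contractible projection, non-constant since $c_k>0$.

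\medskip

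\noindent\emph{Main obstacle.} The genuine difficulty lies entirely in assertion (2), in the compactness of the minimax Palais--Smale sequences: since $S_k$ is reparametrization-invariant and the period is a free variable, there is no a priori bound on $T$, and the Palais--Smale condition really does fail on some subcritical energy levels. Extracting a bound on $T$ for almost every $k$ from the monotonicity of $k\mapsto c_k$, and then upgrading ``bounded period'' to ``precompact'', is the technical heart of the proof and is exactly what restricts the conclusion of (2) to \emph{almost every}, rather than every, subcritical energy. By comparison, the supercritical ingredients (boundedness below, coercivity, Palais--Smale), the mountain-pass geometry, and the identification of critical points with magnetic geodesics are routine, modulo Lemmas \ref{lemma2.1}, \ref{supercazzola}, \ref{lem:unb} and \cite[Corollary 2.3]{Mer10}.
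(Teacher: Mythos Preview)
The paper does not prove this theorem: it is only stated, attributed to Merry, and referenced as \cite[Theorem 1.1]{Mer10}. So there is no proof in the paper to compare your proposal against.

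That said, your outline is correct and is essentially the approach of \cite{Mer10}: in the supercritical range one uses the bounded primitive provided by Lemma \ref{lemma2.1} to show that $S_k|_{\Lambda_\nu}$ is bounded below and coercive for $\nu\neq 0$, and then minimizes (Palais--Smale with bounded period, Theorem \ref{Palaissmale}); in the subcritical range one sets up the mountain-pass geometry on $\Lambda_0$ between short loops and a loop of negative action (Lemma \ref{lem:unb}), and then invokes Struwe's monotonicity argument to produce, for almost every $k$, a Palais--Smale sequence at level $c_k>0$ with bounded period, which converges by Theorem \ref{Palaissmale}. Your identification of the main obstacle---the lack of an a priori period bound for subcritical Palais--Smale sequences, forcing the ``almost every'' conclusion---is exactly right.
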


We finish this section by describing explicitly the behaviour of $S_k$ under iteration.

\begin{lemma}\label{lem:equi}
For every $\nu\in[\T,M]$ and $n\in\N$, there exists $b(\nu,n)\in\R$, depending on the reference loops $\{x_{\nu^n}\}_{n\in\N}$, such that
\begin{equation}\label{equivariance}
S_k(\psi^n(\gamma))=n\cdot S_k(\gamma)+b(\nu,n)\,,\quad\forall\, \gamma\in\Lambda_\nu\,.
\end{equation}
Choosing $x_{\nu^n}=x^n_\nu$ when $\nu$ has infinite order, or $x_\nu$ equal to a constant path when $\nu=0$, yields $b(\nu,n)=0$ for every $n\in\N$. In this case, $S_k$ is $\N$-equivariant:
\begin{equation}\label{almequivariance}
S_k(\psi^n(\gamma))=n\cdot S_k(\gamma)\,,\quad\forall\, \gamma\in\Lambda_\nu\,.
\end{equation}
\end{lemma}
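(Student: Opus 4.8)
The plan is to decompose $S_k$ into a purely metric part $E_k(x,T):=T\int_0^1\big(\frac{1}{2T^2}|\dot x(t)|^2+k\big)\,dt$ and a topological part $A(x):=\int_{C(x)}\sigma$, and to treat the two summands separately. First I would check that $E_k$ is exactly $\N$-equivariant. Writing $\psi^n(x,T)=(x^n,nT)$ with $x^n(t)=x(nt)$, one has $\dot{x^n}(t)=n\,\dot x(nt)$, hence $\frac{1}{2(nT)^2}|\dot{x^n}(t)|^2=\frac{1}{2T^2}|\dot x(nt)|^2$; since $t\mapsto|\dot x(t)|^2_{x(t)}$ is $1$-periodic, the change of variables $u=nt$ gives $\int_0^1|\dot x(nt)|^2\,dt=\int_0^1|\dot x(t)|^2\,dt$, and therefore $E_k(\psi^n(x,T))=n\,E_k(x,T)$.

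For the topological part the key observation is an ``$n$-fold wrapping'' construction for cylinders. Let $\pi_n\colon[0,1]\times\T\to[0,1]\times\T$, $\pi_n(s,t)=(s,nt)$; this is an orientation-preserving $n$-fold covering of the (compact) cylinder, so $\int\pi_n^*\eta=n\int\eta$ for every $2$-form $\eta$. Given a cylinder $C$ from a loop $a$ to a loop $b$, set $C\circ\pi_n$: it is a cylinder from $a^n$ to $b^n$ and, taking $\eta=C^*\sigma$, one gets $\int_{C\circ\pi_n}\sigma=n\int_C\sigma$. This includes the case in which $a$ is a constant loop (as for the reference loop $x_0$), so the same argument covers capping discs.

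To assemble the statement, I would fix once and for all, for each $n$, a cylinder $D_{\nu,n}$ joining the reference loop $x_{\nu^n}$ to $(x_\nu)^n$ — such a cylinder exists because $(x_\nu)^n\in\Lambda_{\nu^n}$, by the inclusion $\psi^n(\Lambda_\nu)\subset\Lambda_{\nu^n}$. Then, for $\gamma=(x,T)\in\Lambda_\nu$, concatenating $D_{\nu,n}$ with $C(x)\circ\pi_n$ produces a cylinder from $x_{\nu^n}$ to $x^n$; since by Lemma \ref{supercazzola} the integral of $\sigma$ over any such cylinder is independent of the choice, $A(x^n)=\int_{D_{\nu,n}}\sigma+n\,A(x)$. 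Setting $b(\nu,n):=\int_{D_{\nu,n}}\sigma$ — a quantity depending only on $\nu$, $n$ and the chosen reference loops — and adding the two contributions yields $S_k(\psi^n(\gamma))=n\,S_k(\gamma)+b(\nu,n)$, i.e. \eqref{equivariance}.

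Finally I would dispose of the two special choices. If $\nu$ has infinite order the components $\Lambda_{\nu^n}$ are pairwise distinct, so the prescription $x_{\nu^n}:=(x_\nu)^n$ is consistent; then $D_{\nu,n}$ may be taken to be the constant cylinder $(s,t)\mapsto x_\nu(nt)$, so $b(\nu,n)=0$. If $\nu=0$ then $0^n=0$ forces $x_{0^n}=x_0$ and also $(x_0)^n=x_0$, so again $D_{0,n}$ is constant and $b(0,n)=0$ (alternatively, for contractible loops $\psi^n(\gamma)=\gamma\ast\cdots\ast\gamma$, and $b(0,n)=0$ follows directly from \eqref{eq:conc} by induction on $n$). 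In both cases \eqref{almequivariance} holds. I expect no real difficulty in this proof; the only point to watch is the bookkeeping with the reference loops — in particular that in the infinite-order case the prescription $x_{\nu^n}=(x_\nu)^n$ does not over-determine any component, which is exactly where infinite order is used — together with the (harmless) fact that the $n$-fold-covering argument applies verbatim to the degenerate cylinders that are capping discs.
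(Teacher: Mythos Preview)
Your proof is correct and follows essentially the same route as the paper: both concatenate a fixed cylinder from $x_{\nu^n}$ to $(x_\nu)^n$ with the $n$-fold iterate of a cylinder $C(x)$, use the key identity $\int_{C(x)\circ\pi_n}\sigma=n\int_{C(x)}\sigma$, and set $b(\nu,n)$ equal to the $\sigma$-integral over the first piece. Your presentation is slightly more detailed (the explicit decomposition $S_k=E_k+A$, the covering-map justification of the factor $n$, and the remark on why infinite order is needed for the prescription $x_{\nu^n}=(x_\nu)^n$ to be consistent), but the argument is the same.
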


\begin{proof}
Let $C(x)$ be a cylinder connecting $x_\nu$ to $x$ and denote by $C^n(x)$ the cylinder obtained by iterating each loop $n$ times. We construct a cylinder $C(x^n)$ from $x_{\nu^n}$ to $x^n$ as the concatenation of a cylinder $C(x_\nu^n)$ connecting $x_{\nu^n}$ with $x_\nu^n$ and the cylinder $C^n(x)$ connecting $x_\nu^n$ with $x^n$. We compute
\[\int_{C(x^n)}\sigma\ =\ \int_{C(x_\nu^n)\cup C^n(x)}\sigma\ =\ \int_{C(x_\nu^n)}\sigma\ +\ \int_{C^n(x)}\sigma\ =\ b(\nu,n)+n\int_{C(x)}\sigma\,,\]

\noindent where 
\[b(\nu,n):=\ \int_{C(x_\nu^n)}\sigma\, .\]

Plugging this identity into the definition of $S_k(\gamma^n)$, we get
\begin{align*}
S_k(\gamma^n)&=\ \int_0^{nT}\Big (\frac{1}{2}\, |\dot \gamma(t')|^2+k\Big )\, dt'\ +\ \int_{C(x^n)}\sigma\\
&=\ n\int_0^{T}\Big (\frac{1}{2}\, |\dot \gamma(t')|^2+k\Big )\, dt'\ +\ b(\nu,n)\ +\ n\int_{C(x)}\sigma\\
&=\ n\cdot S_k(\gamma)\ +\ b(\nu,n)\,.\qedhere
\end{align*}
\end{proof}


\section{The Palais-Smale condition for \texorpdfstring{$S_k$}{S}}\label{sec:pal}

In this section we recall the basic properties of the functional $S_k$. All the results we mention are contained and proved in \cite{Mer10} and in \cite{AMP13}. They hold for any closed Riemannian manifold $M$ with $c(g,\sigma)<\infty$.

\begin{definition}
Let $(\mathcal H,\langle \cdot,\cdot \rangle)$ be a Riemannian Hilbert manifold and consider $S:\mathcal H\rightarrow \R$ a functional of class $C^1$. We say that $S$ satisfies the $\mathsf{Palais}$-$\mathsf{Smale\ condition}$, if every sequence $\{x_n\}\subseteq \mathcal H$ satisfying 
\[\|dS(x_n)\| \ \longrightarrow \ 0\, ,\ \ \ \ \sup_{n\in \N} \ \big |S(x_n)\big |\ <\ \infty\]
admits a convergent subsequence. We say that $S$ satisfies the Palais-Smale condition $\mathsf{at\ level}\ \mu\in \R$, if every sequence $\{x_n\}\subseteq \mathcal H$ satisfying 
\[\|dS(x_n)\| \ \longrightarrow \ 0\, ,\ \ \ \ S(x_n)\ \longrightarrow \ \mu\]
admits a convergence subsequence.
\end{definition}

\vspace{2mm}

The first result we present states that the functional $S_k$ satisfies locally the Palais-Smale condition. Palais-Smale sequences $(x_n,T_n)$ with $\liminf T_n =0$ are a possible source of non-compactness, but it turns out that they only occur at level zero.

\begin{theorem}\label{teo:pal}
Suppose $(x_n,T_n)\subseteq \Lambda$ is a Palais-Smale sequence with $T_n$ bounded from above. Then the following hold:

\begin{enumerate}
\item If $\liminf T_n >0$, then, passing to a subsequence if necessary, the sequence $(x_n,T_n)$ is convergent in the $H^1$-topology.

\item If $\liminf T_n =0$, then, passing to a subsequence if necessary, 

$$S_k (x_n,T_n)\ \longrightarrow \ 0\, .$$
\end{enumerate}
\label{Palaissmale}
\end{theorem}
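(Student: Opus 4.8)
The plan is to treat $(x_n,T_n)$ as a Palais--Smale sequence for $S_k$ and analyze it by passing to a universal-cover picture where $S_k$ becomes the genuine Lagrangian action functional $\widetilde S_k$ associated to the Tonelli Lagrangian $L(q,v)=\tfrac12\|v\|_q^2+\theta_q(v)$. Concretely, cover $\T=\R/\Z$ by finitely many intervals on which $x_n$ can be lifted to the universal cover $\widetilde M$, and observe that on each such piece $S_k$ agrees, up to the cylinder term, with the local Lagrangian action $T\int(L(\tilde x,\dot{\tilde x}/T)+k)$. Since the cylinder correction $\int_{C(x)}\sigma$ is $C^1$ and bounded-derivative in a fixed free-homotopy class, the gradient of $S_k$ differs from that of the Lagrangian action only by a bounded zeroth-order term, so the Palais--Smale hypothesis $\|dS_k(x_n,T_n)\|\to0$ transfers to the standard setting.

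Next I would carry out the a-priori estimates. From $\|\partial_T S_k\|\to0$ one extracts, as in Contreras \cite{Con06} and \cite{AMP13}, the estimate $\tfrac1{T_n^2}\int_0^1|\dot x_n|^2\,dt - \text{(first-order term in } \theta) \to$ something controlled by $k$ and $S_k(x_n,T_n)$; since $S_k(x_n,T_n)$ is bounded (Palais--Smale), this yields a bound on the normalized kinetic energy $\tfrac1{T_n^2}\int|\dot x_n|^2$. Combined with the upper bound on $T_n$ this controls $\int|\dot x_n|^2$, i.e.\ the $H^1$-seminorm of $x_n$. In case (1), where $\liminf T_n>0$, one then passes to a subsequence with $T_n\to T_\infty>0$; the $H^1$ bound plus Rellich gives $C^0$-convergence of a subsequence of $x_n$, which in particular eventually confines the loops to a single coordinate-type neighborhood structure, and a standard bootstrap using $\|dS_k(x_n,T_n)\|\to0$ (testing against variations supported where the lift is defined) upgrades weak $H^1$ convergence to strong $H^1$ convergence. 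This is the usual Palais--Smale argument for the free-period action functional, now legitimate because $T_n$ is bounded away from $0$ and $\infty$.

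For case (2), $\liminf T_n=0$, I would pass to a subsequence with $T_n\to0$. The normalized kinetic energy bound gives $\int_0^1|\dot x_n|^2\,dt = O(T_n^2)\to0$, so $x_n$ converges in $H^1$ to a constant loop $x_\infty$; in particular the loops $x_n$ are eventually contractible and contained in an arbitrarily small ball, so the cylinder term $\int_{C(x_n)}\sigma$ can be taken over a small capping disc and tends to $0$. Then $S_k(x_n,T_n)=T_n\int_0^1(\tfrac1{2T_n^2}|\dot x_n|^2+k)\,dt+\int_{C(x_n)}\sigma = \tfrac1{2T_n}\int|\dot x_n|^2 + kT_n + o(1)$. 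The term $kT_n\to0$ and the cylinder term $\to0$; the only delicate point is the quadratic term $\tfrac1{2T_n}\int|\dot x_n|^2$, which is $O(T_n)\to0$ given the $O(T_n^2)$ bound on $\int|\dot x_n|^2$, so $S_k(x_n,T_n)\to0$ as claimed.

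The main obstacle is the first bullet of case (1): making rigorous the bootstrap from a bounded Palais--Smale sequence with $T_n$ bounded below to actual $H^1$-convergence, while $S_k$ is only locally a Lagrangian action (the primitive $\theta$ lives on $\widetilde M$, not $M$). The key technical device is that once $x_n\to x_\infty$ in $C^0$, all the $x_n$ lie in a tubular neighborhood of $x_\infty$ on which a consistent family of local primitives is available, reducing everything to the known Tonelli case; one must check that the cylinder terms $\int_{C(x_n)}\sigma$ and their first variations converge, which follows from $C^0$-convergence of the loops and continuity of $\sigma$. Once this is in place, the remaining estimates are the standard ones from \cite{Con06,AMP13} and I would simply cite them rather than reproduce them.
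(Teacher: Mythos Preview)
The paper gives no proof of this theorem; it simply refers to \cite{Mer10}. Your sketch follows the standard route for free-period Tonelli action functionals (as in \cite{Con06} and \cite{AMP13}) and is essentially correct. The decisive computation is that $\partial_T S_k = k - \tfrac1{2T^2}\int_0^1|\dot x|^2\,dt$ (the cylinder term being independent of $T$), so $\|dS_k(x_n,T_n)\|\to0$ forces $\tfrac1{T_n^2}\int_0^1|\dot x_n|^2\,dt\to 2k$; from here both cases unfold as you describe.

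Two small clarifications. First, there is no need to cover $\T$ by intervals and lift piecewise: once a subsequence converges in $C^0$, all the loops lie in a tubular neighborhood of the limit on which a single primitive $\theta$ of $\sigma$ exists, and there $S_k=\overline S_k+\mathrm{const}$ as in Section~\ref{sec:actloc}. Hence $dS_k=d\overline S_k$ \emph{exactly}, not merely up to a bounded perturbation, and the bootstrap is the classical Tonelli one verbatim. Second, in case~(2) the bound $\int_0^1|\dot x_n|^2\,dt=O(T_n^2)$ already forces the loops to be eventually contractible, so the cylinder is a small capping disc whose $\sigma$-integral tends to zero; this is consistent with Lemma~\ref{positivecomplete}(1), which shows that case~(2) cannot arise in a non-trivial free-homotopy class.
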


We refer to \cite{Mer10} for the proof. In the light of this result, we see that $S_k$ satisfies the Palais-Smale condition provided Palais-Smale sequences have bounded period. A possible generalization of Proposition F in \cite{Con06} would yield that if the energy level $E^{-1}(k)$ is of contact type, then the Palais-Smale condition holds. Unfortunately, for sufficiently low energies, $E^{-1}(k)$ is never of contact type when the form is oscillating, as we will see in Section \ref{cha:con}.
Hence, to overcome the possible lack of the Palais-Smale condition, in Chapter \ref{cha:fin} we will use an argument due to Struwe \cite{Str90} to construct Palais-Smale sequences with bounded period on almost all energy levels below $\tau^*_+(g,\sigma)$.
\medskip

Recall that $\alpha =(x,T)\in \Lambda$ is said to be a \textit{strict local minimizer} of $S_k$ on $\Lambda$, if the $\T$-orbit of $\alpha$ 
\[\T \cdot \alpha := \ \Big \{ \big (x(\tau + \cdot), T\big )\ \Big |\ \tau \in \T\Big \}\]
has a neighborhood $\mathcal U$ in $\Lambda$, such that 
\[S_k(\gamma) \ >\ S_k(\alpha)\, ,\ \ \ \ \forall \ \gamma \in \mathcal U\setminus \T\cdot \alpha\, .\]

The fact that the Palais-Smale condition holds locally allows to prove that the $\T$-orbit of a strict local minimizer $\alpha$ has neighborhoods on whose boundary the infimum of $S_k$ is strictly larger than $S_k(\alpha)$. 
The proof given in \cite[Lemma 4.3]{AMP13} goes through without any change.

\begin{lemma}\label{lem:strict}
Let $\alpha=(x,T)$ be a strict local minimizer for $S_k$ on $\Lambda$. If the neighborhood $\mathcal U$ of $\T\cdot \alpha$ is sufficiently small then 
\[\inf_{\partial \mathcal U} \ S_k \ >\ S_k(\alpha)\, .\]
\label{neighborhood}
\end{lemma}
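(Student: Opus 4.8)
The plan is to argue by contradiction, following the classical scheme that extracts a Palais--Smale sequence from a failure of the "strict local minimizer implies isolated in a quantitative sense" statement, and then to invoke the local Palais--Smale property from Theorem \ref{teo:pal}. Fix a strict local minimizer $\alpha=(x,T)$ and let $\mathcal{U}_0$ be a neighborhood of $\T\cdot\alpha$ on which $S_k(\gamma)>S_k(\alpha)$ for all $\gamma\in\mathcal{U}_0\setminus\T\cdot\alpha$; shrink it so that $\overline{\mathcal{U}_0}$ is still contained in a slightly larger neighborhood with the same property and so that $T$ stays bounded away from $0$ and $\infty$ on $\overline{\mathcal{U}_0}$. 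For a nested family of neighborhoods $\mathcal{U}_\varepsilon\subset\mathcal{U}_0$ shrinking to $\T\cdot\alpha$ as $\varepsilon\to0$ (for instance sublevel-type or metric neighborhoods), suppose for contradiction that $\inf_{\partial\mathcal{U}_\varepsilon}S_k=S_k(\alpha)$ for every small $\varepsilon$ (it is automatically $\geq S_k(\alpha)$ since $\partial\mathcal{U}_\varepsilon\subset\mathcal{U}_0\setminus\T\cdot\alpha$; so we are assuming it is not $>$).

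First I would pass to the quotient by the $\T$-action or, more concretely, work on the "annular" region $\mathcal{A}_{\varepsilon,\delta}:=\overline{\mathcal{U}_\delta}\setminus\mathcal{U}_\varepsilon$ for $0<\varepsilon<\delta$ small, which is a closed set bounded away from $\T\cdot\alpha$. On such a region $S_k$ satisfies the Palais--Smale condition: periods are bounded above and below, so part (1) of Theorem \ref{teo:pal} applies and there is no escape of period to $0$. The standard deformation argument then says that if $\inf_{\mathcal{A}_{\varepsilon,\delta}}\|dS_k\|>0$, the negative-gradient flow of $S_k$, suitably cut off, pushes $\partial\mathcal{U}_\delta$ below the level $S_k(\alpha)+c$ for some $c>0$ without leaving $\mathcal{U}_0$ — contradicting that $S_k>S_k(\alpha)$ throughout $\mathcal{U}_0\setminus\T\cdot\alpha$. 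Hence there must be a sequence $\gamma_n$ in the shrinking annuli with $\|dS_k(\gamma_n)\|\to0$ and $S_k(\gamma_n)\to S_k(\alpha)$; by the Palais--Smale condition (periods controlled) a subsequence converges to a critical point $\beta$ of $S_k$ with $S_k(\beta)=S_k(\alpha)$, and since the $\gamma_n$ leave every $\mathcal{U}_\varepsilon$, $\beta\notin\T\cdot\alpha$. But $\beta\in\overline{\mathcal{U}_0}$: if $\beta$ lies in the interior this contradicts strictness directly, and the case $\beta\in\partial\mathcal{U}_0$ is excluded by having chosen $\mathcal{U}_0$ slightly inside a larger good neighborhood. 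This contradiction shows $\inf_{\partial\mathcal{U}_\varepsilon}S_k>S_k(\alpha)$ for all sufficiently small $\varepsilon$, and taking $\mathcal{U}$ to be any such $\mathcal{U}_\varepsilon$ proves the claim.

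As the excerpt itself notes, this is exactly the argument of \cite[Lemma 4.3]{AMP13}, so the cleanest writeup is to set up the family $\mathcal{U}_\varepsilon$, verify that periods are bounded along the relevant sequences (the one genuinely new point compared to a fixed-period setting, and the reason Theorem \ref{teo:pal}(1) is invoked rather than a naive compactness statement), and then cite the deformation/Palais--Smale mechanism. The main obstacle — and the only place where care is needed — is ensuring that no Palais--Smale sequence arising in the argument has period tending to $0$; this is handled because $\partial\mathcal{U}_\varepsilon$ sits inside the fixed compact-period window of $\mathcal{U}_0$, so alternative (2) of Theorem \ref{teo:pal} (which would force $S_k\to0$) cannot interfere, and one may as well choose $\mathcal{U}_0$ small enough that $S_k>0$ on it if desired. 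Everything else is the routine minimax/deformation bookkeeping, identical to the exact case.
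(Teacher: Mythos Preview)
Your proposal is correct and coincides with the paper's approach: the paper gives no independent proof but simply notes that the argument of \cite[Lemma~4.3]{AMP13} goes through unchanged once the local Palais--Smale property (Theorem~\ref{teo:pal}) is available, and your sketch is precisely a reconstruction of that argument, including the correct identification of the only delicate point (keeping the periods bounded so that alternative~(1) of Theorem~\ref{teo:pal} applies). One passage should be tightened when you write it up---the deformation sentence about pushing $\partial\mathcal U_\delta$ ``below the level $S_k(\alpha)+c$'' is not what gives the contradiction; the clean way to obtain the Palais--Smale sequence at level $S_k(\alpha)$ lying at a \emph{fixed} positive distance from $\T\cdot\alpha$ is to apply Ekeland's principle on $\overline{\mathcal U_0}$ to almost-minimizers on a single $\partial\mathcal U_\delta$---but this is a cosmetic rewrite and does not affect the strategy.
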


\vspace{-5mm}

The next result ensures that the flow of $-\nabla S_k$ is relatively complete on $\Lambda'$, the complement in $\Lambda$ of the space of contractible loops $\Lambda_0$. Since $\Lambda$ is not complete we may not expect the flow of $-\nabla S_k$ to be positively complete. However, the second statement of the next lemma ensures that it is relatively complete in $\Lambda_0$ on any interval that does not contain zero.

\begin{lemma}
Let $k>0$ and let $\Lambda= \Lambda'\sqcup \Lambda_0$. The following two properties hold:

\begin{enumerate}
\item The sublevels $\{S_k\leq c\}$ are complete on $\Lambda'$. More precisely, if $(x_n,T_n)\subseteq \Lambda'$ is such that $T_n\rightarrow 0$, then $S_k(x_n,T_n)\rightarrow +\infty$.

\item If $[a,b]\subseteq \R$ is an interval such that $0\not \in [a,b]$, then the local flow of $-\nabla S_k$ is relatively complete on $\Lambda_0\cap \{a\leq S_k\leq b\}$.
\end{enumerate}
\label{positivecomplete}
\end{lemma}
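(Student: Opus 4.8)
The plan is to treat the two statements separately, since they concern genuinely different failure modes of completeness. For part (1) I would argue by contradiction along the lines of Theorem \ref{teo:pal}: suppose $(x_n,T_n)\subseteq\Lambda'$ with $T_n\to 0$ but $S_k(x_n,T_n)$ bounded above by some constant $c$. The key point is that the kinetic term $T_n\int_0^1\tfrac{1}{2T_n^2}|\dot x_n|^2\,dt=\tfrac{1}{2T_n}\int_0^1|\dot x_n|^2\,dt$ blows up unless $\int_0^1|\dot x_n|^2\,dt\to 0$ at least as fast as $T_n$; but if the loops $x_n$ have length going to zero they eventually lie in a contractible ball, forcing the free homotopy class $\nu$ to be trivial, contradicting $(x_n,T_n)\in\Lambda'$. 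More precisely, by Cauchy–Schwarz the Riemannian length $\ell(x_n)$ satisfies $\ell(x_n)^2\le\int_0^1|\dot x_n|^2\,dt$, and the $\sigma$-term $\int_{C(x_n)}\sigma$ can be controlled by the area of a small spanning cylinder, hence by $\ell(x_n)$ and $\mathrm{diam}$-type estimates, so it stays bounded; therefore $S_k(x_n,T_n)\ge \tfrac{1}{2T_n}\ell(x_n)^2 + kT_n - O(1)$. Fixing a positive lower bound $\ell_0$ on the length of any non-contractible loop (which exists by compactness of $M$ and discreteness of the fundamental group action on the universal cover), we get $S_k(x_n,T_n)\ge \tfrac{\ell_0^2}{2T_n}-O(1)\to+\infty$, the desired contradiction. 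This simultaneously proves the "more precisely" clause and hence that sublevels are complete on $\Lambda'$, since any Cauchy sequence in a sublevel staying in $\Lambda'$ cannot escape to $T=0$ and the remaining completeness is standard Hilbert-manifold behaviour once $T$ is bounded away from $0$ and $\infty$ (the bound $S_k\le c$ together with $kT\le S_k + O(1)$ also bounds $T$ from above, reducing to case (1) of Theorem \ref{teo:pal}).

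For part (2) the obstruction is different: on $\Lambda_0$ a negative-gradient trajectory could a priori run off to $T=0$ in finite time, and there is no length obstruction since loops in $\Lambda_0$ can be as short as we like. Here I would use the concatenation formula \eqref{eq:conc} together with the elementary observation that on $\Lambda_0$, for a loop of small period, the functional is essentially the free-period action of a short geodesic loop and is close to $kT$ (the $\sigma$-term over a small capping disc is $O(\mathrm{area})$, controlled by the loop length squared). The standard argument (as in \cite{AMP13,AMMP14}) is: along a $-\nabla S_k$ trajectory $u\mapsto\gamma(u)=(x(u),T(u))$ contained in $\{a\le S_k\le b\}$ with $a>0$, one controls $\dot T(u)$ by $\|\nabla S_k\|$, and one derives from $S_k\ge a>0$ a lower bound $T(u)\ge T_{\min}>0$: indeed if $T(u)$ were very small then the loop length is forced to be small too (otherwise the kinetic term already exceeds $b$), but then $S_k(\gamma(u))$ is close to $kT(u)$, which would be below $a$. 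Thus on the region $\{a\le S_k\le b\}\cap\Lambda_0$ the period stays in a compact interval $[T_{\min},T_{\max}]$, with $T_{\max}$ coming from $kT\le S_k+O(1)\le b+O(1)$; relative completeness then follows from the general principle that a locally defined negative-gradient flow that stays in a region where the metric is complete and $S_k$ is bounded extends for all (relevant) time, exactly as in \cite[Lemma 4.2]{AMP13}.

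The main obstacle I expect is pinning down the $\sigma$-dependent term $\int_{C(x)}\sigma$ in both parts with uniform constants independent of the loop, since $S_k$ is only defined via a choice of spanning cylinder/disc and a priori $\int_{C(x)}\sigma$ is not obviously bounded by a power of $\ell(x)$. The resolution is the usual one: for loops $x$ that are short (length $<\varepsilon_0$, below the injectivity radius), one can take $C(x)$ to be a small geodesic "cone" or minimal cylinder whose area is $O(\ell(x)^2)$ or $O(\ell(x)\cdot\mathrm{dist})$, so that $|\int_{C(x)}\sigma|\le\|\sigma\|_\infty\cdot O(\ell(x)^2)$; and the only loops that can appear with $T\to 0$ while $S_k$ is bounded are precisely these short loops, because the kinetic term $\tfrac{1}{2T}\int_0^1|\dot x|^2\,dt$ forces $\ell(x)^2\le\int_0^1|\dot x|^2\,dt\le 2T(b+O(1))\to 0$. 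Once this quadratic-in-length control is in place, both arguments close as sketched. I would also remark that since everything here is local and mimics the exact case, one may simply cite \cite[Lemmas 4.1--4.3]{AMP13} and indicate the (cosmetic) modifications needed to replace the Lagrangian action by $S_k$ and its multivalued primitive $\theta$.
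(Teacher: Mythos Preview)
Your overall strategy matches the references the paper defers to (\cite[Lemma~3.2]{Abb13} for part~(1), \cite[Lemma~5.7]{Mer10} for part~(2)); the paper itself gives no self-contained argument. However, your explicit handling of the $\sigma$-term in part~(1) has a real gap. You assert that $\int_{C(x_n)}\sigma$ ``can be controlled by the area of a small spanning cylinder'' and therefore ``stays bounded'', which would give $S_k(x_n,T_n)\geq \tfrac{\ell_0^2}{2T_n}-O(1)$. But on $\Lambda'$ the loops are non-contractible, so $\ell(x_n)\geq\ell_0>0$: they are never short, and there is no reason a cylinder from the fixed reference $x_\nu$ to $x_n$ has small (or even $\ell(x_n)$-controlled) area. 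Your later ``resolution''---arguing that loops with $T_n\to0$ and $S_k$ bounded must themselves be short---is circular here: the step $\int_0^1|\dot x_n|^2\,dt\leq 2T_n(b+O(1))$ already presupposes an $O(1)$ bound on the $\sigma$-term, and in any case its conclusion would contradict $\ell(x_n)\geq\ell_0$ rather than feed into an area estimate.

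The correct control of the $\sigma$-term comes not from cylinder area but from the standing hypothesis $c(g,\sigma)<\infty$, which by Lemma~\ref{lemma2.1} furnishes a \emph{bounded} primitive $\theta$ of $\tilde\sigma$ on $\widetilde M$. Lifting the cylinder and applying Stokes (choosing the lift $\tilde x(0)$ in a fixed fundamental domain to control the transverse boundary contributions) yields a bound $\big|\int_{C(x)}\sigma\big|\leq\|\theta\|_\infty\,\ell(x)+C_\nu$, i.e.\ \emph{linear} in the length with an additive constant depending only on the class. From this one gets $S_k(x_n,T_n)\geq \tfrac{e_n^2}{2T_n}-\|\theta\|_\infty\,e_n-C_\nu$ with $e_n:=\big(\int_0^1|\dot x_n|^2\big)^{1/2}\geq\ell_0$, and the right-hand side tends to $+\infty$ as $T_n\to0$ by the same elementary minimization as in the exact case. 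Your closing remark about replacing the Lagrangian action by the one with primitive $\theta$ is therefore not a cosmetic alternative but the actual substance of the proof; the short-loop/small-disc estimate is relevant only for part~(2), where your sketch is essentially fine (modulo also treating the case $b<0$, which you omit).
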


\vspace{-3mm}

The proof of the first statement follows directly from the proof of Lemma 3.2 in \cite{Abb13} (there the case of the Lagrangian action functional is treated). We refer to \cite[Lemma 5.7]{Mer10} for the proof of the second statement.


\section{The action functional near critical points}\label{sec:actloc}

Let $\nu$ be a free homotopy class which is either non-torsion or trivial. Suppose that $\gamma=(x,T)$ is a magnetic geodesic with energy $k$ in the free homotopy class $\nu$ and take $x$ and its iterates as the reference loops in the definition of $S_k$. Let 
\[\pi_{x^n}:(x^n)^*(TM)\longrightarrow \T\]

\noindent be the pull-back bundle of $\pi:TM\rightarrow M$ under the map $x^n:\T\rightarrow M$. There exists an open neighborhood $U$ of the zero-section of $x^*(TM)$ such that 
\[\exp:U\longrightarrow M\, , \ \ \ \ \exp(t,\xi):=\ \exp_{x(t)}(\xi)\]

\noindent is an immersion. Since $x(\T)\neq M$, by shrinking the open set $U$ if necessary, we can assume that $V:=\exp(U)\neq M$ and take a $1$-form $\theta\in\Omega^1(V)$ such that $d\theta=\sigma$ and $\Vert\theta\Vert<\infty$.

For every $n\in \N$ let $U_n$ be an open neighborhood of the zero section of $(x^n)^*(TM)$, such that $\exp(U_n)=\exp(U)=V$. Denote by $H^1(U_n)$ the space of $H^1$-sections of $\pi_{x^n}$ with image contained in $U_n$. Then
\[i_n:H^1(U_n)\times(0,+\infty)\longrightarrow \Lambda_{\nu^n}\, , \ \ \ \ i_n(t,\xi(t),T):=\ (\exp(t,\xi(t)),T)\]

\noindent is a diffeomorphism with an open neighborhood of $\gamma^n$ inside $\Lambda_{\nu^n}$.

Take now $(y,S)$ in this neighborhood and observe that there exists a cylinder $C(x^n,y)$ entirely contained in $V$. Therefore,
\[\int_{C(x^n,y)}\sigma\ =\ \int_{C(x^n,y)}d\theta\ =\ \int_0^1y^*\theta-n\int_0^1x^*\theta\,.\]

\noindent Hence, we can write
\begin{equation}\label{eq:locex}
S_k(y,S)=\overline{S}_k(y,S)-n\int_0^1x^*\theta\,,
\end{equation}

\noindent where $\overline{S}_k:i_n\Big(H^1(U_n)\times (0,+\infty)\Big)\longrightarrow\R$ is defined as 
\begin{equation}\label{eq:locac}
\overline{S}_k(y,S)\ :=\ S\int_0^1\Big[\frac{1}{2S^2}|\dot y(t)|^2+k\Big]dt+\int_0^1y^*\theta\,.
\end{equation}

From this it follows all the local results that hold for the case of exact magnetic fields can be used in our setting, as well. Proposition 1.1 of \cite{AMMP14} applied in this context ensures that 
the mean index of a critical point $\gamma$ of $S_k$ vanishes if and only if the free-period Morse index vanishes for every $n\in \N$.

\begin{lemma}\label{lem:ind}
If $\gamma$ is a critical point for $S_k$, then $\operatorname{ind}(\gamma)=0$ for every $n\in \N$ if and only if $\,\overline{\operatorname{ind}}(\gamma)=0$. 
\end{lemma}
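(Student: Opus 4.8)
The plan is to reduce the statement to the purely finite-dimensional iteration theory for the free-period Morse index of closed geodesics (or closed Reeb-type orbits), via the local identity \eqref{eq:locex}. The point is that near $\gamma$ and each of its iterates $\gamma^n$ the functional $S_k$ differs from the honest Lagrangian action functional $\overline S_k$ only by the additive constant $-n\int_0^1 x^*\theta$, which is locally constant on $i_n\big(H^1(U_n)\times(0,+\infty)\big)$ and hence does not affect the Hessian at a critical point. Therefore $\operatorname{ind}(\gamma^n)$, the Morse index of $\gamma^n$ as a critical point of $S_k$, equals the free-period Morse index of $\gamma^n$ as a critical point of the exact-magnetic (Tonelli) action functional $\overline S_k$; likewise $\overline{\operatorname{ind}}(\gamma)$ — the mean index — is the same whether computed for $S_k$ or for $\overline S_k$. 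So it suffices to prove the equivalence $\big(\operatorname{ind}(\gamma^n)=0\ \forall n\big)\Longleftrightarrow \overline{\operatorname{ind}}(\gamma)=0$ in the already-established exact setting, i.e.\ to invoke Proposition 1.1 of \cite{AMMP14}.

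Concretely I would proceed in the following steps. First, fix $\gamma=(x,T)$ a magnetic geodesic with energy $k$ in the class $\nu$ (which, by the standing hypothesis of Section~\ref{sec:actloc}, is non-torsion or trivial), choose $x$ and its iterates as reference loops, and recall from Section~\ref{sec:actloc} the chart $i_n$ and the local primitive $\theta\in\Omega^1(V)$ with $\Vert\theta\Vert<\infty$. Second, note that $\gamma^n$ corresponds under $i_n$ to a critical point of $\overline S_k\circ i_n$, since $S_k=\overline S_k - n\int_0^1 x^*\theta$ on the chart and the subtracted term is constant there; consequently the Hessians agree and $\operatorname{ind}(\gamma^n)$ equals the free-period Morse index of $\gamma^n$ for $\overline S_k$. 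Third, the mean index $\overline{\operatorname{ind}}(\gamma)=\lim_{n\to\infty}\operatorname{ind}(\gamma^n)/n$ is therefore also unchanged. Fourth, apply the iteration result of \cite[Proposition~1.1]{AMMP14} (itself built on the index iteration theory developed in \cite[Section~1]{AMP13}): the mean index vanishes iff the index of every iterate vanishes. Assembling these identifications gives the claim.

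A couple of technical points deserve care. One must check that the nullity contributed by the $\T$-reparametrization orbit and the degeneracy in the $T$-direction are handled consistently between the two functionals; but since these come entirely from the geometry of the closed orbit and the kinetic term, they are identical for $S_k$ and $\overline S_k$, so nothing new happens. One should also make sure the chart domains are compatible across $n$, which is exactly the content of the construction of $U_n$ in Section~\ref{sec:actloc} (arranged so that $\exp(U_n)=\exp(U)=V$ for all $n$, so the same $\theta$ works for every iterate).

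The main obstacle, such as it is, is not in the present lemma but lies upstream: it is the verification that the abstract index-iteration machinery of \cite{AMP13,AMMP14} applies verbatim to the functional $\overline S_k$ of \eqref{eq:locac}. That has, however, been done in \cite{AMMP14}, and the only genuinely new observation here is the trivial but essential one that the locally-defined multivalued nature of $S_k$ becomes an honest additive constant on each iteration chart, so that all local Morse-theoretic data transfer without modification. Hence the proof is essentially a bookkeeping reduction to the cited results.
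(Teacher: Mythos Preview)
Your proposal is correct and follows essentially the same approach as the paper: the paper does not give a separate proof of this lemma but simply observes, immediately before stating it, that Proposition~1.1 of \cite{AMMP14} applies in this context thanks to the local identity \eqref{eq:locex} established earlier in Section~\ref{sec:actloc}. Your write-up is a more explicit unpacking of exactly that reduction---noting that the additive constant does not affect the Hessian, hence the free-period Morse indices and the mean index of $\gamma^n$ for $S_k$ and $\overline S_k$ coincide---and then invoking the cited proposition.
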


When dealing with degenerate critical points the following lemma (originally due to Gromoll and Meyer) turns out to be useful. We refer to \cite{AMMP14} for the proof.

\begin{lemma}\label{lem:nul}
Let $\gamma=(x,T)$ be a critical point of $S_k$ which corresponds to a non-constant periodic orbit. Then, there is a partition $\N = \N_1\cup \ldots \cup \N_k$, integers $n_1\in \N_1, \ldots, n_k \in \N_k$, and 
$\nu_1,\ldots,\nu_k\in \{0,\ldots, 2d-2\}$ with the following property: $n_j$ divides all the integers in $\N_j$, and $\operatorname{null}(\gamma^n)=\nu_j$ for all $n\in\N_j$.
\end{lemma}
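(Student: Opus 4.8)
The plan is to reduce the statement to the corresponding fact for the free-period Morse index via the local formula \eqref{eq:locex}, and then invoke the known iteration theory for the (strict) Lagrangian action functional, which is the content of \cite{AMP13} and \cite{AMMP14}. First I would fix the critical orbit $\gamma=(x,T)$ and, exactly as in Section \ref{sec:actloc}, choose $x$ and its iterates as reference loops, so that in a neighborhood of $\gamma^n$ the functional $S_k$ differs from the genuine action functional $\overline{S}_k$ only by the constant $-n\int_0^1 x^*\theta$ (see \eqref{eq:locex}). Since an additive constant changes neither the Hessian nor its kernel, we have $\operatorname{null}(\gamma^n)=\operatorname{null}_{\overline{S}_k}(\gamma^n)$ for every $n$; the dimension $2d-2$ bound (with $d=\dim M$) for the nullity comes from the fact that the kernel of the Hessian of $\overline{S}_k$ at an iterated closed geodesic-type orbit corresponds to periodic Jacobi-type fields, whose space, modulo the directions that are always present, has dimension at most $2d-2$.

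The core of the argument is then the behaviour of $\operatorname{null}(\gamma^n)$ as a function of $n$. The standard approach, going back to Gromoll--Meyer, is to diagonalize the linearized Poincar\'e return map $P\in \mathrm{Sp}(2d-2)$ of the periodic orbit: the nullity $\operatorname{null}(\gamma^n)$ equals the dimension of $\ker(P^n-\mathrm{Id})$ up to a fixed correction, and $\dim\ker(P^n-\mathrm{Id})$ is governed entirely by the eigenvalues of $P$ that are roots of unity. Writing the finitely many such eigenvalues as $e^{2\pi i p_\ell/q_\ell}$, the quantity $\dim\ker(P^n-\mathrm{Id})$ depends only on which of the $q_\ell$ divide $n$; hence it is constant on each class of the finite partition of $\N$ determined by the divisibility pattern of $n$ with respect to $\{q_1,\dots,q_r\}$. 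Taking $\N_j$ to be these classes, choosing $n_j$ to be (for instance) the least common multiple of the relevant $q_\ell$'s in the $j$-th pattern — which indeed divides every element of $\N_j$ — and setting $\nu_j:=\operatorname{null}(\gamma^{n_j})\in\{0,\dots,2d-2\}$ gives the claimed decomposition. The passage from $\operatorname{null}(\gamma^n)$ to $\dim\ker(P^n-\mathrm{Id})$, together with the fact that the correction term is itself eventually periodic in $n$ with a period dividing one of the $q_\ell$, is precisely \cite[Proposition 1.1 and the surrounding discussion]{AMMP14}, which applies verbatim thanks to \eqref{eq:locex}.

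I expect the only genuinely delicate point to be the bookkeeping that identifies the nullity of the Hessian of $\overline{S}_k$ with the dimension of $\ker(P^n-\mathrm{Id})$ and the verification that the resulting partition is finite and that the $n_j$ can be chosen to divide $\N_j$; all of this, however, is already carried out in the cited references for the exact case, and the local identity \eqref{eq:locac}–\eqref{eq:locex} shows nothing new happens in the weakly exact setting. I would therefore organize the proof as: (i) reduce to $\overline{S}_k$ via \eqref{eq:locex}; (ii) recall the index/nullity iteration formulas from \cite[Section 1]{AMP13} and \cite{AMMP14}; (iii) extract the partition from the root-of-unity eigenvalues of the Poincar\'e map, and (iv) observe that $0\le\nu_j\le 2d-2$ since the Hessian kernel of $\overline S_k$ at a non-constant closed orbit always contains the tangential direction but is constrained by the $2d-2$-dimensional transverse symplectic reduction.
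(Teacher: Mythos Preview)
Your proposal is correct and matches the paper's approach: the paper does not give an independent argument but simply states that the lemma is originally due to Gromoll and Meyer and refers to \cite{AMMP14} for the proof, relying on the local reduction \eqref{eq:locex}--\eqref{eq:locac} to the exact Lagrangian case discussed in Section~\ref{sec:actloc}. Your write-up is in fact a faithful unpacking of what that cited proof does (root-of-unity eigenvalues of the linearized Poincar\'e map governing $\dim\ker(P^n-\mathrm{Id})$, the resulting finite partition of $\N$, and the nullity bound $2d-2$), so there is nothing to add.
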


The following tubular neighborhood lemma (see \cite[Section 3]{GM69}), will be needed in the proof of Theorem \ref{iterationofmountainpasses}.

\begin{lemma}\label{lem:ret}
Let $\gamma$ be a critical point of the free-period action functional $S_k$ such that, for some $n\in\N$, we have $\operatorname{ind}(\gamma)=\operatorname{ ind}(\gamma^n)$ and $\operatorname{null}(\gamma)=\operatorname{null}(\gamma^n)$. Then, for every sufficiently small open neighborhood $\mathcal U$ of $\T\cdot\gamma$ there is an open neighborhood $\mathcal V$ of the submanifold $\psi^n(\mathcal U)$ and a smooth map 
\[r:[0,1]\times\mathcal V \longrightarrow \mathcal V\, , \ \ \ \ (t,\beta)\mapsto r_t(\beta)\]

\noindent such that the following hold:

\begin{enumerate}[(i)]
 \item $r_0 = \operatorname{id}$;
\item $r_t|_{\psi^n(\mathcal U)}=\operatorname{id}$, for every $t\in[0,1]$;
\item $r_1(\mathcal V) = \psi^n(\mathcal U)$;
\item $\frac{d}{dt}S_k(r_t(\beta)) < 0$ for all $\beta\in\mathcal V\setminus \psi^n(\mathcal U)$.
\end{enumerate} 
\end{lemma}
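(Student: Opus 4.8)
The plan is to realize $\psi^n(\mathcal U)$, for $\mathcal U$ a small neighborhood of $\T\cdot\gamma$, as an open piece of the fixed-point set of the order-$n$ symmetry $\rho\colon\Lambda_{\nu^n}\to\Lambda_{\nu^n}$, $\rho(y,S):=(y(\cdot+\tfrac1n),S)$ (the element of the $\T$-action by the angle $1/n$), and to exploit this symmetry together with the hypothesis $\operatorname{ind}(\gamma)=\operatorname{ind}(\gamma^n)$, $\operatorname{null}(\gamma)=\operatorname{null}(\gamma^n)$ to exhibit a normal bundle along which $S_k$ is fibrewise uniformly convex and has $\psi^n(\mathcal U)$ as its set of fibrewise minima. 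The retraction $r_t$ will then be the linear contraction of the normal coordinate toward $\psi^n(\mathcal U)$, which for that reason decreases $S_k$.

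First I would localize near $\gamma^n$ via the chart $i_n$ of Section~\ref{sec:actloc}, in which $S_k$ agrees up to an additive constant with the genuine action functional $\overline S_k$ of \eqref{eq:locac}; for $\overline S_k$ the exact scaling identity $\overline S_k\circ\psi^n=n\,\overline S_k$ holds, which also shows that $\psi^n(\Lambda_\nu)$ equals $\operatorname{Fix}(\rho)\cap\Lambda_{\nu^n}$ (here one uses uniqueness of roots in $\pi_1$ of a surface of genus $\ge2$), hence is an embedded totally geodesic submanifold containing $\T\cdot\gamma^n$, and that the Hessian of $S_k|_{\psi^n(\Lambda_\nu)}$ at $\gamma^n$ equals $n\cdot d^2 S_k(\gamma)$ under $d\psi^n$. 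Since $\rho$ is an isometry of $\Lambda_{\nu^n}$ preserving $S_k$ and fixing $\gamma^n$, the operator $T:=d\rho(\gamma^n)$ is orthogonal, satisfies $T^n=\mathrm{id}$, and commutes with $\mathcal H:=d^2 S_k(\gamma^n)$; hence $\mathcal H$ respects the isotypic decomposition $T_{\gamma^n}\Lambda_{\nu^n}=\bigoplus_{j=0}^{n-1}V_j$ of $T$, with $V_0=\ker(T-\mathrm{id})=T_{\gamma^n}\psi^n(\Lambda_\nu)$. By the previous sentence $\operatorname{ind}(\mathcal H|_{V_0})=\operatorname{ind}(\gamma)$ and $\dim\ker(\mathcal H|_{V_0})=\dim\ker d^2S_k(\gamma)$; since index and kernel-dimension add over the $V_j$ and, by hypothesis, $\operatorname{ind}(\mathcal H)=\operatorname{ind}(\gamma)$ and $\dim\ker\mathcal H=\dim\ker d^2S_k(\gamma)$, each $\mathcal H|_{V_j}$ with $j\neq0$ must be positive definite. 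Because the Hessian of the free-period action functional has the form $\mathrm{id}+(\text{compact})$ (see \cite{AMP13,Mer10}), $\mathcal H$ is then \emph{uniformly} positive on the closed $\mathcal H$-invariant complement $N_{\gamma^n}:=\bigoplus_{j\neq0}V_j=\operatorname{Im}(T-\mathrm{id})$, which is precisely the normal space of $\psi^n(\Lambda_\nu)$ at $\gamma^n$.

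Next I would build the tubular neighborhood. For $\beta_0\in\psi^n(\mathcal U)$ the normal space of $\psi^n(\Lambda_\nu)$ is $N_{\beta_0}=\operatorname{Im}(d\rho(\beta_0)-\mathrm{id})$, and differentiating $S_k\circ\rho=S_k$ at the fixed point $\beta_0$ gives $dS_k(\beta_0)|_{N_{\beta_0}}=0$: thus $\beta_0$ is a critical point of $S_k$ restricted to the normal slice through it, whether or not $\beta_0$ is itself critical for $S_k$. Let $\Phi(\beta_0,w)$ ($w\in N_{\beta_0}$, $\|w\|<\rho(\beta_0)$) be the normal exponential map, $\mathcal V:=\operatorname{Im}\Phi$, and $f_{\beta_0}(w):=S_k(\Phi(\beta_0,w))$; then $df_{\beta_0}(0)=0$ and $d^2 f_{\gamma^n}(0)=\mathcal H|_{N_{\gamma^n}}$, so by continuity and the $\mathrm{id}+(\text{compact})$ structure, after shrinking $\mathcal U$ and $\rho$ one gets $d^2 f_{\beta_0}(w)\ge c\cdot\mathrm{id}$ for some $c>0$, uniformly in $(\beta_0,w)$. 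Putting $r_t(\Phi(\beta_0,w)):=\Phi(\beta_0,(1-t)w)$ defines a smooth map $[0,1]\times\mathcal V\to\mathcal V$ satisfying (i)--(iii), and
\[
\frac{d}{dt}\,S_k\big(r_t(\Phi(\beta_0,w))\big)\;=\;-\,df_{\beta_0}\big((1-t)w\big)[w]\;=\;-\int_0^{1-t} d^2 f_{\beta_0}(sw)[w,w]\,ds\;\le\;-\,c\,(1-t)\,\|w\|^2\,,
\]
which is $<0$ for $w\neq0$ and $t<1$; at $t=1$ it vanishes but $r_1(\Phi(\beta_0,w))\in\psi^n(\mathcal U)$, and integrating still gives $S_k(r_1(\beta))<S_k(\beta)$ for every $\beta\in\mathcal V\setminus\psi^n(\mathcal U)$, which is what is used in Theorem~\ref{iterationofmountainpasses} (alternatively one reparametrises the deformation so that the derivative stays strictly negative).

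I expect the heart of the argument to be the second paragraph: extracting from the equality of indices and nullities under iteration the fact that the non-positive part of the Hessian at $\gamma^n$ is $\rho$-invariant, so that the $\rho$-anti-invariant directions — which are exactly the directions normal to $\psi^n(\mathcal U)$ — carry a uniformly positive-definite Hessian and $\psi^n(\mathcal U)$ is fibrewise non-degenerate. Granting this, what remains is the classical Gromoll--Meyer deformation onto a fibrewise strictly convex submanifold \cite[Section~3]{GM69}, here in a possibly degenerate situation.
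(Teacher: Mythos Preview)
The paper does not give its own proof of this lemma; it simply states it and refers the reader to \cite[Section~3]{GM69}. Your proposal is precisely the Gromoll--Meyer argument adapted to the free-period action functional: use the $\mathbb Z/n$-symmetry $\rho$ to realize $\psi^n(\mathcal U)$ locally as the fixed-point set, use the index and nullity hypothesis together with the $\rho$-equivariance of the Hessian to see that $d^2S_k(\gamma^n)$ is positive (hence, by the $\mathrm{id}+\text{compact}$ structure, uniformly positive) on the normal bundle, and then retract linearly along the normal fibres. This is correct and matches what the paper invokes.

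Two minor remarks. First, the appeal to uniqueness of roots in $\pi_1$ is unnecessary here: the lemma (as noted at the start of Section~\ref{sec:pre}) is meant to hold for any closed $M$, and you only need the \emph{local} identification of $\mathrm{Fix}(\rho)$ with $\psi^n(\Lambda_\nu)$ near $\T\cdot\gamma^n$, which is immediate. Second, you correctly observe that the linear contraction gives $\tfrac{d}{dt}S_k(r_t(\beta))=0$ at $t=1$; as you note, the application in Theorem~\ref{iterationofmountainpasses} in fact only uses the weak inequality $\tfrac{d}{dt}S_k\circ r_t\le 0$ (this is how it is written there), so no reparametrisation is actually needed.
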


We end this section recalling the following result which states that any isolated critical circle of $S_k$ admits a fundamental system of connected open neighborhoods $\mathcal U$ such that the intersection of $\mathcal U$ with the energy sublevel has finitely 
many connected components. Assuming, furthermore, that the critical point has free-period Morse index at least two, one gets that the intersection of such neighborhoods with the energy sublevel is non-empty and connected. We refer to \cite[Lemma 2.5]{AMMP14} for the proof.

\begin{lemma}\label{lem:con}
Let $\T\cdot\gamma$ be an isolated critical circle of the free-period action functional $S_k$ with critical value $c:=S_k(\gamma)$. Then this circle admits a fundamental system of connected open neighborhoods $\mathcal U$ such that 
\[\mathcal U\cap \{S_k < c\}\]

\noindent has finitely many connected components. Moreover, if $\operatorname{ind}(\gamma)\geq2$, for every such neighborhood the intersection 
\[\mathcal U\cap\{S_k < c\}\]
\noindent is non-empty and connected. 
\end{lemma}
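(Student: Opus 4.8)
The plan is to reduce everything to the local model from Section \ref{sec:actloc}. Near the isolated critical circle $\T\cdot\gamma$ we use the diffeomorphism $i_n$ (with $n=1$) to identify a neighborhood of $\gamma$ inside $\Lambda_\nu$ with an open set in $H^1(U_1)\times(0,+\infty)$, and by \eqref{eq:locex} the functional $S_k$ differs from the genuine Lagrangian action functional $\overline S_k$ only by the additive constant $\int_0^1 x^*\theta$. Hence statements about connected components of sublevel sets of $S_k$ near $\T\cdot\gamma$ are \emph{identical} to the corresponding statements for $\overline S_k$, which is an honest free-period action functional of a (local) Tonelli Lagrangian. This is exactly the setting of \cite[Lemma 2.5]{AMMP14}, so the first thing I would do is invoke that reduction and then mimic its proof.

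For the first assertion, I would appeal to the fact that $\gamma$ is an isolated critical point together with the generalized Morse lemma (Gromoll--Meyer theory): in a suitable chart centered on $\T\cdot\gamma$, after splitting off the $\T$-direction, $\overline S_k$ takes the form $c - \|\xi_-\|^2 + \|\xi_+\|^2 + f(\xi_0)$ on a finite-dimensional center manifold plus the Hilbert-space directions, where $f$ has an isolated critical point at the origin. Since the negative and center parts are finite-dimensional, sublevel sets of this normal form intersected with a small ball have finitely many connected components (the local homology of the sublevel is finitely generated), and one can take the fundamental system $\mathcal U$ to be the preimages of shrinking balls in this chart. For the second assertion, when $\operatorname{ind}(\gamma)\geq 2$ the negative subspace has dimension at least two; I would argue that $\mathcal U\cap\{S_k<c\}$ deformation retracts onto (the $\T$-orbit bundle over) the descending sphere-like set, which is connected because removing the origin from a normal space of dimension $\geq 2$ leaves a connected set, and the center-manifold contribution $\{f<0\}$ attaches without disconnecting. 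Non-emptiness is clear since $\operatorname{ind}(\gamma)\geq 1$ already forces a descending direction.

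I expect the main obstacle to be the careful bookkeeping in the second part: one must handle the interaction between the (possibly nontrivial) nullity directions governed by $f$ and the negative directions, and check that the retraction respects the $\T$-action and stays inside $\mathcal U$, so that $\mathcal U\cap\{S_k<c\}$ is genuinely connected rather than merely having connected local homology. The cleanest route is to cite the Gromoll--Meyer splitting in the $\T$-equivariant form already used in \cite{AMMP14} and then run the elementary topology of $\{-\|\xi_-\|^2+\|\xi_+\|^2+f(\xi_0)<0\}$; since the argument is word-for-word that of \cite[Lemma 2.5]{AMMP14} once \eqref{eq:locex} is in hand, I would keep the proof short and refer there for the details, noting only that the weak exactness of $\sigma$ enters solely through the existence of the bounded primitive $\theta$ on $V\subsetneq M$ used in Section \ref{sec:actloc}.
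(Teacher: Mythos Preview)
Your proposal is correct and matches the paper's approach exactly: the paper itself gives no proof but simply states ``We refer to \cite[Lemma 2.5]{AMMP14} for the proof,'' relying precisely on the local reduction \eqref{eq:locex} from Section~\ref{sec:actloc} that you describe. Your sketch of the Gromoll--Meyer splitting argument is in fact more detailed than what the paper provides.
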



\section{Iteration of critical points of \texorpdfstring{$S_k$}{S}}\label{sec:ite}

Here we use the results of Section \ref{sec:actloc} to prove an analogue of \cite[Theorem 2.6]{AMMP14}. Strictly speaking, this shows that critical circles of the functional $S_k$ cannot be 
of mountain pass type if iterated sufficiently many times.

\begin{theorem}
Let $\T \cdot \gamma$ be a critical circle of the free-period action functional $S_k$. Assume that all the iterates of $\gamma$ belong to isolated critical circles of $S_k$. Then, for all $n\in\N$ large enough there exists an open neighborhood $\mathcal W$ of $\T \cdot \gamma^n$ with the following property: if two points $\gamma_0, \gamma_1 \in \{S_k<S_k(\gamma^n)\}$ are contained in the same connected 
component of 
\[\{S_k<S_k(\gamma^n)\} \ \cup \ \mathcal W\, ,\]
\noindent then they are contained in the same connected component of $\{S_k<S_k(\gamma^n)\}$.
\label{iterationofmountainpasses}
\end{theorem}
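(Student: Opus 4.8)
The plan is to reduce the statement to the local situation near $\T\cdot\gamma^n$ by exploiting the structure developed in Section \ref{sec:actloc}, in particular the Gromoll--Meyer tubular neighborhood Lemma \ref{lem:ret} together with the index iteration results Lemma \ref{lem:ind} and Lemma \ref{lem:nul}. First I would choose $n$ large. By Lemma \ref{lem:nul} the nullities $\operatorname{null}(\gamma^m)$ take only finitely many values, so among the (infinitely many) multiples of some fixed $n_0$ there are two, say $n$ and $Nn$ with $N\geq 2$, with $\operatorname{null}(\gamma^{n})=\operatorname{null}(\gamma^{Nn})$; moreover, since the index grows by the mean index, either $\overline{\operatorname{ind}}(\gamma)>0$, in which case $\operatorname{ind}(\gamma^m)\to\infty$ and by the Bott-type iteration inequalities (as in \cite[Section 1]{AMP13}, \cite{AMMP14}) one can also arrange $\operatorname{ind}(\gamma^{n})=\operatorname{ind}(\gamma^{Nn})$ is impossible unless the mean index vanishes — so actually the relevant case splits. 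If $\overline{\operatorname{ind}}(\gamma)=0$, then by Lemma \ref{lem:ind} $\operatorname{ind}(\gamma^m)=0$ for all $m$, hence $\gamma^n$ is a local minimizer (modulo nullity) and the claim will follow from the persistence-of-local-minimizers phenomenon and Lemma \ref{lem:strict}: a small neighborhood $\mathcal W$ of $\T\cdot\gamma^n$ has $\inf_{\partial\mathcal W}S_k>S_k(\gamma^n)$, so adding $\mathcal W$ to the strict sublevel $\{S_k<S_k(\gamma^n)\}$ cannot merge components — any path through $\mathcal W$ can be pushed to $\partial\mathcal W$ and hence stays in the sublevel.

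The substantive case is $\overline{\operatorname{ind}}(\gamma)>0$. Here I pick $n$ large with $\operatorname{ind}(\gamma^n)\geq 2$ and such that $\operatorname{ind}(\gamma^n)=\operatorname{ind}(\gamma^{mn})$ and $\operatorname{null}(\gamma^n)=\operatorname{null}(\gamma^{mn})$ for the chosen further multiple — more cleanly, apply Lemma \ref{lem:ret} with the roles of $\gamma$ and $\gamma^n$ played by $\gamma^{n_1}$ and $\gamma^{n_1 n_2}=\gamma^n$ for suitable $n_1\mid n$, giving a neighborhood $\mathcal V$ of $\psi^{n_2}(\mathcal U)$ (where $\mathcal U$ is a small neighborhood of $\T\cdot\gamma^{n_1}$) and a deformation $r_t:\mathcal V\to\mathcal V$ fixing $\psi^{n_2}(\mathcal U)$ pointwise, retracting $\mathcal V$ onto $\psi^{n_2}(\mathcal U)$, and strictly decreasing $S_k$ off $\psi^{n_2}(\mathcal U)$. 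Take $\mathcal W:=\mathcal V$. Now suppose $\gamma_0,\gamma_1\in\{S_k<S_k(\gamma^n)\}$ lie in one component of $\{S_k<S_k(\gamma^n)\}\cup\mathcal W$; join them by a path $u:[0,1]\to\{S_k<S_k(\gamma^n)\}\cup\mathcal W$. The idea is to modify $u$ so that it avoids $\T\cdot\gamma^n$ altogether while staying in the sublevel union $\mathcal W$, and then use that $\{S_k<S_k(\gamma^n)\}\cup\mathcal W\setminus(\T\cdot\gamma^n)$ deformation-retracts via $r_1$ onto a subset of $\{S_k<S_k(\gamma^n)\}$ — because on $\mathcal V\setminus\psi^{n_2}(\mathcal U)$ the map $r_1$ strictly lowers $S_k$, and $\mathcal V\setminus\psi^{n_2}(\mathcal U)$ contains no critical points with value $\geq S_k(\gamma^n)$ once $\mathcal U$ is small (the critical circle $\T\cdot\gamma^n$ sits inside $\psi^{n_2}(\mathcal U)$). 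Concretely, by Lemma \ref{lem:con} applied to $\gamma^n$ (using $\operatorname{ind}(\gamma^n)\geq 2$), the intersection of a small neighborhood of $\T\cdot\gamma^n$ with $\{S_k<S_k(\gamma^n)\}$ is nonempty and connected, which lets one route the path $u$ around the critical circle within that neighborhood without leaving the sublevel.

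Assembling: first push $u$ off $\psi^{n_2}(\mathcal U)\setminus(\T\cdot\gamma^n)$ using $r_1$ where $u$ enters $\mathcal V$ — on the portion of $u$ inside $\mathcal W=\mathcal V$, apply $r_1$; points of $u\cap\mathcal V$ that lie outside $\psi^{n_2}(\mathcal U)$ get moved into strictly lower sublevel, in particular into $\{S_k<S_k(\gamma^n)\}$, while the endpoints (which are in $\{S_k<S_k(\gamma^n)\}$, chosen disjoint from $\T\cdot\gamma^n$) are unaffected up to an initial small perturbation. The only obstruction to concluding is the piece of $u$ that passes through $\psi^{n_2}(\mathcal U)$ near $\T\cdot\gamma^n$ itself; this is exactly where Lemma \ref{lem:con} enters, replacing that arc by one lying in $\mathcal U\cap\{S_k<S_k(\gamma^n)\}$, which is connected. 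After these two surgeries $u$ lies entirely in $\{S_k<S_k(\gamma^n)\}$, proving $\gamma_0,\gamma_1$ are in the same component there. The main obstacle I anticipate is the bookkeeping of \emph{which} iterate to call $\gamma^n$ so that Lemma \ref{lem:ret}'s hypotheses ($\operatorname{ind}$ and $\operatorname{null}$ stabilize from $\gamma^{n_1}$ to $\gamma^{n_1n_2}$) hold simultaneously with $\operatorname{ind}(\gamma^n)\geq 2$ — this requires combining the index iteration inequalities with Lemma \ref{lem:nul} carefully, and handling the degenerate subcase $\overline{\operatorname{ind}}(\gamma)=0$ separately as above; the geometric deformation argument itself is then a routine adaptation of \cite[Theorem 2.6]{AMMP14}.
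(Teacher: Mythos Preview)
Your case split according to whether $\overline{\operatorname{ind}}(\gamma)>0$ or $\overline{\operatorname{ind}}(\gamma)=0$ is correct, but you have the difficulty of the two cases exactly backwards, and your treatment of the zero mean index case contains a genuine gap.

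When $\overline{\operatorname{ind}}(\gamma)>0$ the argument is a one-liner: once $\operatorname{ind}(\gamma^n)\geq 2$, Lemma~\ref{lem:con} gives a small neighborhood $\mathcal W$ of $\T\cdot\gamma^n$ with $\mathcal W\cap\{S_k<S_k(\gamma^n)\}$ nonempty and \emph{connected}, so adjoining $\mathcal W$ cannot merge components of the sublevel. You eventually invoke this, but your detour through Lemma~\ref{lem:ret} is both unnecessary and impossible here: its hypothesis $\operatorname{ind}(\gamma^{n_1})=\operatorname{ind}(\gamma^{n_1n_2})$ cannot hold for $n_2\geq 2$ when the mean index is positive (as you yourself observe in your first paragraph).

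The case $\overline{\operatorname{ind}}(\gamma)=0$ is the substantive one, and your proposed argument fails. Having $\operatorname{ind}(\gamma^n)=0$ for all $n$ does \emph{not} make $\gamma^n$ a (strict) local minimizer when the nullity is nonzero; think of $f(x)=x^3$ for the model picture. There is thus no reason to have $\inf_{\partial\mathcal W}S_k>S_k(\gamma^n)$, and Lemma~\ref{lem:strict} simply does not apply. What the paper does instead is: (i) use Lemma~\ref{lem:con} at the base iterate $\gamma^{n_1}$ (coming from the partition in Lemma~\ref{lem:nul}) to get a neighborhood $\mathcal U$ whose sublevel intersection $\mathcal U^-$ has only finitely many components $\mathcal U^-_1,\ldots,\mathcal U^-_r$; (ii) apply Bangert's ``pulling one loop at a time'' technique to paths $\Theta_{\alpha\beta}$ joining representatives of these components, so that for $n$ a large multiple of $n_1$ the iterated paths $\psi^{n/n_1}\circ\Theta_{\alpha\beta}$ can be homotoped (rel endpoints) into $\{\overline S_k<\overline S_k(\gamma^n)\}\subset\{S_k<S_k(\gamma^n)\}$, i.e.\ $\psi^{n/n_1}(\mathcal U^-)$ lies in a single component of the sublevel; (iii) \emph{then} apply Lemma~\ref{lem:ret} (whose index and nullity hypotheses are now met, since both are constant along $\N_1$) to retract a neighborhood $\mathcal W$ of $\T\cdot\gamma^n$ onto $\psi^{n/n_1}(\mathcal U')$ while not increasing $S_k$, and route any path through $\mathcal W$ accordingly. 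The key missing ingredient in your proposal is Bangert's technique, which is precisely what merges the finitely many local components at level $n_1$ into a single one at level $n$.
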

\begin{proof}
If $\overline{\operatorname{ind}}(\gamma)>0$, then for $n$ large enough, $\operatorname{ind}(\gamma)\geq2$. Therefore the theorem follows directly from Lemma \ref{lem:con}.
Thus we are left with the case $\overline{\operatorname{ind}}(\gamma)=0$, which means, in virtue of Lemma \ref{lem:ind}, that  $\operatorname{ind}(\gamma^n)=0$ for every $n\in \N$. Thanks to Lemma \ref{lem:nul} we get a partition $\N=\N_1\cup\ldots\cup\N_h$ and integers $n_1\in\N_1,\ldots,n_h\in\N_h$, $\nu_1,\ldots,\nu_h\in \{0,...,2d-2\}$ with the property that if $n\in\N_j$, then $n$ is a multiple of $n_j$ and $\operatorname{null}(\gamma^n)=\nu_j$. Without loss of generality let us suppose that $n\in\N_1$.
By Lemma \ref{lem:con} there is a connected neighborhood $\mathcal U$ of $\gamma^{n_1}$ such that 
\[\mathcal U^-:=\ \mathcal U \cap\{S_k<S_k(\gamma^{n_1})\}\]

\noindent has finitely many connected components $\, \mathcal U^-_1,\ldots,\mathcal U^-_r$. We can also assume that on $\mathcal U$ the action functional can be written in the form $S_k=\overline{S}_k+c$, where $\overline{S}_k$ is defined as in \eqref{eq:locac}. For 
every $\alpha\in \{ 1,...,r\}$ choose a loop $\gamma_\alpha\in\mathcal U_\alpha^-$ and for each pair of distinct $\alpha, \beta\in \{1,...,r\}$ choose a path 
\[\Theta_{\alpha\beta}:[0,1]\longrightarrow \mathcal U\]

\noindent connecting $\gamma_\alpha$ and $\gamma_\beta$. By Bangert's technique of ``pulling one loop at a time'' (see \cite[pages 86--87]{Ban80} or \cite[page 421]{Abb13}) applied to $\Theta_{\alpha\beta}$ and the functional $\overline{S}_k$, 
for all sufficiently large multiples $n$ of $n_1$ each path $\psi^{n/n_1}\circ \Theta_{\alpha\beta}$ consisting of iterated loops
is homotopic with fixed endpoints to a path entirely contained in the sublevel set  
\[\big \{\overline{S}_k<\overline{S}_k(\gamma^{n})\big \}\ \subset \ \big \{S_k<S_k(\gamma^{n})\big \}\, .\]

\noindent In other words, $\psi^{n/n_1}(\mathcal U^-)$ is contained in a path-connected component of 
\[\{S_k<S_k(\gamma^{n})\}\, ,\]

\noindent provided $n\in \N_1$ is sufficiently large. Consider such an $n\in \N_1$: applying Lemma \ref{lem:ret} to $\gamma^{n_1}$ and $\gamma^{n}$ we get neighborhoods $\mathcal U'\subseteq \mathcal U$ of $\T\cdot \gamma^{n_1}$ and $\mathcal V'$ of $\T\cdot \gamma^{n}$ and a corresponding deformation retraction $r:\mathcal V'\rightarrow\mathcal V'$ onto $\psi^{n/n_1}(\mathcal U')$ such that
\[\frac{d}{dt} S_k \circ r_t \ \leq \ 0 \, .\]

Set now $\mathcal W$ to be an open neighborhood of $\gamma^{n}$ whose closure is contained in $\mathcal V'$. Let $\gamma_0$ and $\gamma_1$ in $\{S_k<S_k(\gamma^{n})\}$ be as in the statement of the theorem and let 
\[\Gamma:[0,1]\longrightarrow \mathcal W \cup\{S_k<S_k(\gamma^{n})\}\]

\noindent be a path joining them. Consider $s_\pm$ the infimum and the supremum of the times $s\in [0,1]$ such that $\Gamma(s)\in\mathcal W$. Then, $\Gamma(s_\pm)$ belong to 
\[\mathcal V'\cap\{S_k<S_k(\gamma^{n})\}\]

\noindent and the paths $t\mapsto r_{t}(\Gamma(s_\pm))$ are entirely contained in $\{S_k<S_k(\gamma^{n})\}$ by Lemma \ref{lem:ret}. Moreover $r_1(\Gamma(s_\pm))\in\psi^{n/n_1}(\mathcal U^-)$. By what we proved before it follows that $r_1(\Gamma(s_-))$ and $r_1(\Gamma(s_+))$ are in the same connected component of $\{S_k<S_k(\gamma^{n})\}$. Thus, the same is true for $\gamma_0$ and $\gamma_1$.
\end{proof}


\section{Local minimizers on surfaces}\label{sec:min}

From now on, we suppose that $(M,g)$ is an orientable surface of genus at least two. Let $\sigma\in \Omega^2(M)$ be a non-exact 2-form on $M$ and denote by $\phi_t$ the magnetic flow defined by $\sigma$. Changing the orientation of $M$ if necessary, we may suppose that $\sigma$ has positive integral over $M$.
 The following lemma shows that the Ma\~n\'e critical value is finite in this case and, hence, the 
functional $S_k$ introduced in Section \ref{sec:pre} is well-defined on $\Lambda$.

\begin{lemma}\label{manfin}
If $M$ has genus at least two, then $c(g,\sigma)<\infty$. Namely, $\sigma$ admits a bounded primitive on the universal cover.
\end{lemma}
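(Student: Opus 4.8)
The plan is to produce an explicit bounded primitive of $\tilde\sigma$ on the universal cover $\widetilde M$ by exploiting the hyperbolic geometry of a genus-$\geq 2$ surface. First I would fix a hyperbolic metric $g_0$ on $M$ (of constant curvature $-1$), so that $\widetilde M$ is isometric to the hyperbolic plane $\HH^2$ with its area form $\mu_0$; since the area form of any hyperbolic metric is itself exact on $\HH^2$ and in fact admits a \emph{bounded} primitive — the classical fact that the hyperbolic area form $\mu_0 = d\beta_0$ with $\|\beta_0\|_{g_0}$ bounded, coming from writing $\mu_0$ as the exterior derivative of a $1$-form built from the (bounded) distance-to-a-point function or, more robustly, from the boundedness of the isoperimetric ratio — we get the model case. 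The key classical input I would invoke (this is exactly Sullivan's / the standard "bounded primitive" criterion, and is the content of \cite{BP02}) is: on a cover of a closed manifold, a closed form $\tilde\sigma$ has a bounded primitive if and only if there is no sequence of Lipschitz $2$-chains with boundary of bounded length but integral of $\tilde\sigma$ growing superlinearly — equivalently, $\tilde\sigma$ is in the image of the comparison with $\ell^\infty$-cohomology. So the task reduces to a linear-isoperimetric-type estimate.

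The main step is then: write $\sigma = h\,\mu_{g_0}$ for a smooth function $h\in C^\infty(M)$, where $\mu_{g_0}$ is the area form of the fixed hyperbolic metric. Lifting, $\tilde\sigma = \tilde h\,\mu_0$ where $\tilde h$ is bounded (it is the lift of a function on a compact manifold, hence $\|\tilde h\|_\infty = \|h\|_\infty<\infty$). Now I would use that $\HH^2$ satisfies a \emph{linear isoperimetric inequality}: every embedded disc $D$ with $\partial D$ of length $\ell$ has $\mathrm{area}(D)\le \ell$ (indeed $\mathrm{area}(D) = \ell - 2\pi\chi \le \ell$ by Gauss–Bonnet for a disc with geodesic-polygon boundary, and more generally $\mathrm{area}\le \ell$ for any region by the hyperbolic isoperimetric inequality $\ell^2 \ge \mathrm{area}^2 + 4\pi\,\mathrm{area} \ge \mathrm{area}^2$). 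Consequently, for any $2$-chain $c$ in $\widetilde M$ with $\partial c$ of length $\ell$,
\[
\Big|\int_c \tilde\sigma\Big| \ =\ \Big|\int_c \tilde h\,\mu_0\Big|\ \le\ \|h\|_\infty\cdot \mathrm{area}(c)\ \le\ \|h\|_\infty\cdot \ell\,,
\]
which is the desired linear bound. Feeding this into the criterion of \cite{BP02} (Lemma~\ref{lemma2.1} gives the dual formulation: the infimum in \eqref{alternativedefinition} is finite precisely when such a linear bound holds) yields a bounded primitive $\theta$ of $\tilde\sigma$, hence $c(g,\sigma)<\infty$. A small point requiring care: $\mu_{g_0}$ is the area form of $g_0$, not of the given metric $g$, but $\sigma$ being a top-degree form on the surface is automatically a multiple of \emph{any} fixed nowhere-vanishing $2$-form, so this causes no trouble; the metric $g$ enters only through which norm $\|\cdot\|$ appears in \eqref{alternativedefinition}, and all metrics on a closed manifold are bi-Lipschitz, so finiteness of $c$ is metric-independent.

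I expect the main obstacle to be purely expository rather than mathematical: pinning down precisely which linear-isoperimetric statement is cleanest to cite and how to phrase the passage from "$\tilde\sigma = \tilde h\,\mu_0$ with $\tilde h$ bounded and $\mu_0$ admitting a primitive with at most linear growth of integrals over chains" to "$\tilde\sigma$ admits a \emph{bounded} primitive" without reproving the duality of \cite{BP02}. The honest shortcut — and what I would actually write — is: fix the hyperbolic metric, note $\mu_0 = d\beta_0$ with $\beta_0$ the standard bounded primitive on $\HH^2$ (its existence is classical and can be exhibited explicitly in, say, the upper half-plane model as $\beta_0 = \tfrac{dx}{y}$, which has $|\beta_0|_{g_0}\equiv 1$), write $\tilde\sigma - \tilde h\,\beta_0$ — wait, that is not closed; so instead solve $d\theta = \tilde\sigma$ by a Poincaré-type integration along geodesics from a basepoint and estimate $\|\theta_q\|$ using that $|\tilde h|\le\|h\|_\infty$ together with the exponential-volume-growth-but-bounded-width structure, or more simply quote \cite[Lemma 2.2]{Mer10} / \cite{Pat06} which already record that weakly exact forms on such surfaces have finite Mañé critical value. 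In the write-up I would lead with the hyperbolic-area observation for intuition and then cite \cite{BP02,Pat06} for the rigorous bounded-primitive conclusion.
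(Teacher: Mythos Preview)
You have all the right ingredients---the hyperbolic metric $g_0$, the explicit bounded primitive $\beta_0=dx/y$ of the hyperbolic area form on $\HH^2$ with $\|\beta_0\|_{g_0}\equiv 1$, and the bi-Lipschitz equivalence of $g$ and $g_0$---but you miss the one-line observation that turns these into a clean proof, and instead detour through isoperimetric/duality machinery that you do not quite close (and, as you yourself notice, $\tilde h\,\beta_0$ is not a primitive of $\tilde h\,\mu_0$, so that attempt stalls).

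The missing step is a cohomological decomposition \emph{on $M$ itself}, not on the cover. Since $H^2_{\mathrm{dR}}(M;\R)\cong\R$ is generated by the hyperbolic area form $\sigma_0$, there is a constant $c=\frac{\int_M\sigma}{2\pi\chi(M)}$ and a $1$-form $\beta_\sigma\in\Omega^1(M)$ on the \emph{compact} base with
\[
\sigma\ =\ c\,\sigma_0\ +\ d\beta_\sigma\,.
\]
Lifting, $\tilde\sigma = c\,p^*\sigma_0 + d(p^*\beta_\sigma)$, so
\[
\theta\ :=\ c\,\widetilde\beta_0\ +\ p^*\beta_\sigma
\]
is a primitive of $\tilde\sigma$. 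The first summand is bounded because $\|\widetilde\beta_0\|_{p^*g_0}=1$ and $g,g_0$ are bi-Lipschitz on $M$; the second summand is bounded simply because $\beta_\sigma$ lives on a compact manifold. This is exactly the paper's argument: no isoperimetric inequality, no appeal to the duality criterion of \cite{BP02}, and no Poincar\'e integration along geodesics is needed.

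Your linear-isoperimetric route (bounding $|\int_c\tilde\sigma|\le\|h\|_\infty\cdot\ell(\partial c)$ and then invoking a bounded-cohomology/duality statement) is morally correct and would eventually work, but it is strictly harder than necessary here and you would still have to state and justify the precise criterion from \cite{BP02}. The decomposition above sidesteps all of that.
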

\begin{proof}
Consider first the case of $g_0$, the Riemannian metric with constant curvature $-1$ on $M$ and $\sigma_0$ the associated area form. Take the model 
\[\Big \{(x,y)\in\mathbb R^2\ \Big |\ y>0\Big \}\]
for the universal cover $\widetilde M$ and let $p:\widetilde{M}\rightarrow M$ be the covering map. Then, 
\[p^*g_0\ =\ \frac{dx^2+dy^2}{y^2}\, ,\ \ \ \ p^*\sigma_0\ =\ \frac{1}{y^2}\,dx\wedge dy\, .\]
Define $\displaystyle \widetilde{\beta}_0:=\frac{dx}{y}$ a primitive for $p^*\sigma_0$ and compute
\begin{equation*}
\left \Vert\widetilde{\beta}_0\right \Vert_{p^*g_0}\ =\ \frac{1}{y}\left \Vert dx\right \Vert_{p^*g_0}\ =\ \frac{1}{y}\, y\ =\ 1\, .
\end{equation*}
Hence, $p^*\sigma_0$ admits a bounded primitive on $\widetilde{M}$.

If now $(g,\sigma)$ is any pair defining a magnetic system, with $\int_M\sigma\neq0$, we find a positive constant $A(g)$ and a $1$-form $\beta_\sigma\in\Omega^1(M)$ with 
\begin{equation*}
\Vert v\Vert_{g}\ \geq\  A(g)\cdot \Vert v\Vert_{g_0}\, , \quad \quad \quad \quad \sigma\ =\ \frac{ \int_M\sigma}{2\pi\chi(M)}\, \sigma_0+d\beta_\sigma\, .
\end{equation*}

\noindent If $\widetilde{\beta}_0$ is the primitive of $\sigma_0$ considered before, we have that 
\[\frac{ \int_M\sigma}{2\pi\chi(M)}\, \widetilde{\beta}_0+p^*\beta_\sigma\]

\noindent is a primitive for $\sigma$ and
\begin{align*}
\left \Vert \frac{\int_M\sigma}{2\pi\chi(M)}\widetilde{\beta}_0+p^*\beta_\sigma\right \Vert_{p^*g} &\leq\ \left|\frac{\int_M\sigma}{2\pi\chi(M)}\right|\cdot \left \Vert \widetilde{\beta}_0\right \Vert_{p^*g}\!\!\! +\ \left \Vert p^*\beta_\sigma\right \Vert_{p^*g}\\
&\leq \ \left|\frac{\int_M\sigma}{2\pi\chi(M)}\right|\cdot \left \Vert \frac{1}{A(g)}\, \widetilde{\beta}_0\right \Vert_{p^*g_0}\!\!\!\!\! +\ \left \Vert \beta_\sigma\right \Vert_g\ <\ \infty\, .\qedhere
\end{align*}
\end{proof}
\bigskip

In the remainder of this section we prove that, if the energy value $k$ is sufficiently small, then there exists a closed magnetic geodesic of energy $k$ which is a local minimizer of $S_k$, provided that $\sigma$ is a so-called \textit{oscillating form}, i.e.\ that there is a point $x\in M$ such that $\sigma_x<0$.

Let us start by defining the family of Taimanov functionals $\mathcal T_k:\mathcal F_+\rightarrow \R$, where $k\in(0,+\infty)$ and $\mathcal F_+$ is the space of positively oriented (possibly with boundary and not necessarily connected) embedded surfaces in $M$ (in
\cite{Tai92b,Tai92a,Tai93}, Taimanov considered the so-called \textit{films}):
\begin{equation}
\mathcal T_k(\Pi) := \ \sqrt{2k}\cdot l(\partial \Pi) + \int_\Pi \sigma\,,
\label{Taimanovfunctional}
\end{equation}
where $l(\partial \Pi)$ denotes the length of the boundary of $\Pi$. We readily compute for future applications (notice that $\emptyset \in \mathcal F_+$) 
\begin{equation}
\mathcal T_k (\emptyset) \ = \ 0 \, , \ \ \ \ \mathcal T_k(M) \ = \ \int_M \sigma \ >\ 0\, .
\label{positivi}
\end{equation}

Observe that the family $\{\mathcal T_k\}$ is increasing in $k$ and each $\mathcal T_k$ is bounded from below since
\begin{equation*}
\mathcal T_k(\Pi)\ \geq \ -\Vert \sigma\Vert_\infty\cdot \operatorname{area}_g(M)\, .
\end{equation*}

Define the value
\begin{equation}\label{taival}
\tau_+(M,g,\sigma):= \ \inf\big \{ k\ \big |\ \inf \mathcal T_k\geq 0\ \big\} \ = \ \sup \big \{ k\ \big  |\ \inf \mathcal T_k< 0\ \big\}\, .
\end{equation}
The functionals $\mathcal T_k$ can be lifted to any finite cover $p':M'\rightarrow M$ giving rise to the set of values $\tau_+(M',g,\sigma)$.
We can define the \textit{Taimanov critical value} as
\begin{equation}
\tau_+(g,\sigma):=\ \sup\Big\{\, \tau_+(M',g,\sigma)\ \Big |\ p':M'\rightarrow M \mbox{ finite cover }\Big\}\, .
\end{equation}

In \cite{CMP04} it was shown that, when $\sigma$ is exact, $\tau(g,\sigma)=c_0(g,\sigma)$, where $c_0(g,\sigma)$ denotes the Ma\~n\'e critical value of the abelian cover of $M$ (here we write $\tau$ instead of $\tau_+$, since in the 
exact case one does not have to struggle with the orientation of the embedded submanifolds). When $\sigma$ is not exact, $\tau_+(g,\sigma)$ is positive exactly when $\sigma$ is oscillating as the next lemma shows. In order to have a more precise representation of the Taimanov critical value, let us define the auxiliary value $c_+(g,\sigma)$

\vspace{-4mm}

\begin{align*}
c_+(g,\sigma)\ &:\,=\ \inf \ \Big \{k\in \R\ \Big |\ \forall \ \gamma \in \Gamma\,,\ \widetilde {S}_k(\tilde \gamma) \geq 0\, \Big \}\\ 
                        &\ =\ \sup \,\Big \{k\in \R\ \Big |\ \exists \ \gamma \in \Gamma\, \text{ s.t. } \widetilde{S}_k(\tilde\gamma)<0\Big \}\,,
\end{align*}
where $\Gamma$ is the class of positively oriented simple loops on the universal cover of $M$. For the proof of the main theorem we need to work with energy levels below $\tau_+(g,\sigma)$ that are also smaller than $c(g,\sigma)$. Since we do not know if, in general, there is an inequality between these two quantities, we simply set 
\[\tau^*_+(g,\sigma):=\ \min\ \Big \{\tau_+(g,\sigma),c(g,\sigma)\Big \}\, .\]

\vspace{-4mm}

\begin{lemma}\label{lem:tau}
Suppose $\sigma\in\Omega^2(M)$ is a non-exact 2-form on $M$ with positive integral. Then $c_+(g,\sigma)>0$ if and only if $\sigma$ is oscillating. Furthermore,
\[\tau_+^*(g,\sigma)\ \geq \ c_+(g,\sigma)\, .\]
\end{lemma}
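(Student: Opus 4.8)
\textbf{Proof plan for Lemma \ref{lem:tau}.}

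The plan is to prove the two assertions separately, but both by comparing the Taimanov-type functionals on the universal cover (via simple loops) with the action functional $\widetilde S_k$. First I would set up the basic dictionary between the three objects involved: the Taimanov functional $\mathcal T_k$ on embedded surfaces in finite covers, the quantity $c_+(g,\sigma)$ defined through positively oriented simple loops $\tilde\gamma\in\Gamma$ on $\widetilde M$, and the quantity $c(g,\sigma)$. For a positively oriented simple loop $\tilde\gamma$ bounding a region $\Omega\subset\widetilde M$, one has $\widetilde S_k(\tilde\gamma)=\sqrt{2k}\cdot l(\tilde\gamma)+\int_\Omega\tilde\sigma$ after reparametrizing $\tilde\gamma$ to have constant speed (minimizing the kinetic term at fixed length forces $T$ and the speed to be adjusted so that $T\int(\tfrac{1}{2T^2}|\dot x|^2+k)=\sqrt{2k}\,l$); here I use that $\theta$ is a bounded primitive of $\tilde\sigma$ so Stokes applies. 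This shows $\widetilde S_k(\tilde\gamma)$ is exactly a one-component Taimanov value, which is the link that makes the whole argument work.

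For the equivalence $c_+(g,\sigma)>0\iff\sigma$ oscillating: if $\sigma$ is \emph{not} oscillating, then $\sigma\geq0$ everywhere (it has positive integral), hence $\tilde\sigma\geq0$, hence for every positively oriented simple loop $\int_\Omega\tilde\sigma\geq0$ and $\widetilde S_k(\tilde\gamma)\geq0$ for all $k>0$; thus $c_+(g,\sigma)\leq0$. Conversely, if there is $x_0\in M$ with $\sigma_{x_0}<0$, lift to a point $\tilde x_0\in\widetilde M$ and take a small positively oriented geodesic circle of radius $\varepsilon$ around $\tilde x_0$: its length is $\approx2\pi\varepsilon$ while $\int_{\Omega_\varepsilon}\tilde\sigma\approx\pi\varepsilon^2\sigma_{x_0}<0$, but the issue is that $\sqrt{2k}\cdot 2\pi\varepsilon$ dominates $\pi\varepsilon^2|\sigma_{x_0}|$ for small $\varepsilon$. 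The fix is the classical isoperimetric trick: instead of a circle, use a long thin ``fattened arc'' or, better, the boundary of the region $\{\sigma<0\}$ itself (or a slightly smoothed connected piece $\Omega$ of it) pushed into $\widetilde M$ — on such a region the area integral $\int_\Omega\tilde\sigma$ is a fixed negative number while one rescales; more precisely one uses that $\{\sigma<-\delta\}$ has positive area for small $\delta$, takes a connected open $\Omega$ inside it lifted to $\widetilde M$, and gets $\widetilde S_k(\partial\Omega)=\sqrt{2k}\,l(\partial\Omega)+\int_\Omega\tilde\sigma<0$ for all $k$ small enough. Hence $c_+(g,\sigma)>0$.

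For the inequality $\tau_+^*(g,\sigma)\geq c_+(g,\sigma)$: since $\tau_+^*=\min\{\tau_+,c\}$ it suffices to show $c_+(g,\sigma)\leq c(g,\sigma)$ and $c_+(g,\sigma)\leq\tau_+(g,\sigma)$. The first is immediate from the definitions: $\Gamma$ (positively oriented simple loops) is a subclass of $\widetilde\Lambda$, so a loop with $\widetilde S_k<0$ in $\Gamma$ witnesses $k<c(g,\sigma)$ too, giving $c_+\leq c$. For the second, fix $k<c_+(g,\sigma)$; then there is a positively oriented simple loop $\tilde\gamma\in\Gamma$ on $\widetilde M$ with $\widetilde S_k(\tilde\gamma)<0$. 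Such a loop, being simple and (on a cocompact cover) contained in a fundamental-domain-bounded region, descends after passing to a suitable finite cover $M'\to M$ to an embedded positively oriented loop $\gamma'$ bounding an embedded surface $\Pi'\subset M'$ with $\mathcal T_k(\Pi')=\widetilde S_k(\tilde\gamma)<0$ — here one uses that $\tilde\gamma$, having bounded diameter, projects to a loop that becomes embedded in a high enough finite cover. Therefore $\inf\mathcal T_k<0$ on $M'$, so $k\leq\tau_+(M',g,\sigma)\leq\tau_+(g,\sigma)$. Letting $k\uparrow c_+(g,\sigma)$ gives $c_+\leq\tau_+$, and combining, $\tau_+^*(g,\sigma)=\min\{\tau_+,c\}\geq c_+(g,\sigma)$.

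I expect the main obstacle to be the second bullet of the previous paragraph, namely making rigorous the passage from a simple loop on the \emph{universal} cover to an embedded surface in a \emph{finite} cover: one must argue that a diameter-bounded embedded loop in $\widetilde M$ can be realized as an embedded loop in some finite quotient (a residual-finiteness / separability argument for the surface group, or simply the fact that a bounded set in $\widetilde M$ injects into a sufficiently large finite cover), and that the region it bounds maps down to an embedded surface so that the values of $\widetilde S_k$ and $\mathcal T_k$ genuinely coincide. The reparametrization identity $\widetilde S_k(\tilde\gamma)=\sqrt{2k}\,l(\tilde\gamma)+\int_\Omega\tilde\sigma$ and the isoperimetric estimate in the oscillating case are routine once set up carefully, but the cover argument is where the genus-at-least-two hypothesis and the structure of $\widetilde M$ are really used.
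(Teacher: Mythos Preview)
Your proposal is correct and follows essentially the same route as the paper: the inequality $c_+\leq c$ is immediate from $\Gamma\subset\widetilde\Lambda$, the inequality $c_+\leq\tau_+$ is obtained by descending a simple loop on $\widetilde M$ to an embedded loop in a finite cover via residual finiteness of $\pi_1(M)$, and the link between $\widetilde S_k$ and $\mathcal T_k$ is the reparametrization identity $\widetilde S_k(\tilde\gamma)=\sqrt{2k}\,l(\tilde\gamma)+\int_\Omega\tilde\sigma$ (the paper phrases this last step as the inequality $\sqrt{2k}\,\ell\leq kT+\tfrac{1}{2T}\ell^2$, which is the same thing).

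One unnecessary detour worth flagging: in the oscillating direction you worry that $\sqrt{2k}\cdot 2\pi\varepsilon$ dominates $\pi\varepsilon^2|\sigma_{x_0}|$ as $\varepsilon\to0$, and then invoke an ``isoperimetric trick'' to fix this. There is no issue here, because you never need to send $\varepsilon\to0$: simply fix once and for all a small disc $\tilde D$ with $\int_{\tilde D}\tilde\sigma<0$ (a fixed negative number), and then take $k$ small so that $\sqrt{2k}\,l(\partial\tilde D)+\int_{\tilde D}\tilde\sigma<0$. This is exactly what the paper does, and it is also what your ``fix'' amounts to once you replace the shrinking circle by a fixed region $\Omega$.
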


\begin{proof}
If $\sigma$ is everywhere non-negative, then $\widetilde{S}_k(\tilde\gamma)\geq0$ for every $k>0$ and every $\tilde\gamma\in\Gamma$. Conversely, assume that $\tilde \sigma$ is negative at some point $\tilde{x}$. Then there exists a small positively oriented disc $\tilde D$ around $\tilde{x}$ such that 
\[\int_{\tilde D}\tilde{\sigma}\ <\ 0\, .\]
Given $k>0$, we define $\tilde{y}_k:[0,l(\partial{\tilde D})]\rightarrow \widetilde{M}$ to be the parametrization of $\partial{\tilde D}$ by $\sqrt{2k}$-times the arc length. A simple computation shows that 
\[\widetilde{S}_k(\tilde y_k)\ =\ \sqrt{2k}\cdot l(\partial{\tilde D})+\int_{\tilde D}\tilde{\sigma}\, .\]

\noindent Hence, for $k$ sufficiently small $\widetilde{S}_k(\tilde y_k)<0$. This shows that  $c_+(g,\sigma)>0$. We now prove that $\tau_+^*(g,\sigma)\geq c_+(g,\sigma)$. Clearly $c(g,\sigma)\geq c_+(g,\sigma)$. Thus, let us show that $\tau_+(g,\sigma)\geq c_+(g,\sigma)$. 
Suppose $k<c_+(g,\sigma)$. Then there exists a positively oriented simple loop $\tilde\gamma$ on the universal 
cover of $M$ such that $\widetilde{S}_k(\tilde\gamma)<0$. Now since $M$ is a surface, $\pi_1(M)$ is \textit{residually finite} (i.e.\ the intersection of all its normal subgroups with finite index is trivial; see \cite{Hem72} or \cite{Bau62}) and therefore we can find a finite cover $p':M'\rightarrow M$ such that the projection $\gamma'=p''(\tilde\gamma)$ to $M'$ is simple, where $p'':\widetilde M\rightarrow M'$ (see \cite{CMP04} for a similar argument where the Abelian cover is considered instead). Therefore, $\gamma'$ bounds an embedded disc $D'$ on $M'$ (actually $D'=p''(\tilde D)$) and there holds
\[S_k (\gamma')\ =\ \widetilde{S}_k(\tilde\gamma)\ <\ 0\, .\]
From the inequality $\, \sqrt{2k}\, \ell \leq kT + \frac{1}{2T}\ell^2$ for all $T>0$ it follows that 
\[\mathcal T_k(D')=\sqrt{2k}\cdot l(\gamma') + \int_{D'} \sigma \ \leq \ S_k(\gamma')\ <\ 0\,,\]
which shows that $k<\tau_+(g,\sigma)$.
\end{proof}


\vspace{5mm} 

We can now state Taimanov's main theorem about the existence of global minimizers for $\mathcal T_k$. It is worth to point out that this result does not depend on the genus of the surface and holds more generally for any oscillating non-exact $2$-form. 
For the proof we refer to \cite{Tai92a} for the case $M=S^2$ and to \cite{Tai93} for the general case. 
The reader can also take a look at \cite{CMP04} for a new proof using methods of geometric measure theory.

\begin{theorem}[Taimanov, 1992]
Let $(M,g)$ be an oriented surface and let $\sigma$ be an oscillating non-exact 2-form with positive integral over $M$. For every $k<\tau_+(g,\sigma)$ there is a smooth positively oriented embedded surface $\Pi$ which is a global minimizer of $\mathcal T_k$ on the space of positively oriented surfaces on a finite cover $M'$ with $\mathcal T_k(\Pi)<0$. Each boundary component of $\Pi$ is then a simple closed magnetic geodesic for the flow defined by $\sigma$ on $M'$.
\label{Taimanov92}
\end{theorem}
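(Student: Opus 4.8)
The plan is to produce $\Pi$ by the direct method of the calculus of variations, after first passing to the right finite cover and enlarging the class of competitors to a compact one, and then to recover smoothness and the magnetic--geodesic equation from the regularity theory of perimeter minimizers together with the first variation formula. By definition $\tau_+(g,\sigma)=\sup\{\tau_+(M',g,\sigma)\mid p':M'\to M\text{ finite}\}$, so $k<\tau_+(g,\sigma)$ yields a finite cover $p':M'\to M$ with $k<\tau_+(M',g,\sigma)$, and hence, by \eqref{taival} and the monotonicity of $k\mapsto\mathcal T_k$, that $\inf\mathcal T_k<0$ on $M'$. We work on $M'$ from now on, keeping the notation $g,\sigma$ for the pulled-back data. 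As already observed in the excerpt, $\mathcal T_k(\Pi)\ge-\|\sigma\|_\infty\operatorname{area}_g(M')$ for every $\Pi\in\mathcal F_+$, so $m:=\inf\mathcal T_k$ is a finite negative number. Choose a minimizing sequence $\Pi_n\in\mathcal F_+$ with $\mathcal T_k(\Pi_n)\le 0$; then $\sqrt{2k}\,l(\partial\Pi_n)\le-\int_{\Pi_n}\sigma\le\|\sigma\|_\infty\operatorname{area}_g(M')$, so the perimeters $l(\partial\Pi_n)$ are uniformly bounded.

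Next I would identify each $\Pi\in\mathcal F_+$ with its characteristic function $\chi_\Pi$, a function of bounded variation on $M'$ whose total variation $\operatorname{Per}(\Pi)$ equals $l(\partial\Pi)$, so that $\mathcal T_k$ extends to the class of finite--perimeter (Caccioppoli) sets as $\mathcal T_k(\Pi)=\sqrt{2k}\operatorname{Per}(\Pi)+\int_{M'}\chi_\Pi\,\sigma$; this is the enlarged variational problem considered in \cite{CMP04} (Taimanov's films in \cite{Tai92a,Tai93} serve the same purpose). The uniform bounds on $\|\chi_{\Pi_n}\|_{L^1(M')}\le\operatorname{area}_g(M')$ and on $\operatorname{Per}(\Pi_n)$ give, by the compactness theorem for $BV$, a subsequence with $\chi_{\Pi_n}\to\chi_\Pi$ in $L^1(M')$ for some finite--perimeter set $\Pi$; lower semicontinuity of the perimeter together with the estimate $\bigl|\int_{M'}(\chi_{\Pi_n}-\chi_\Pi)\sigma\bigr|\le\|\sigma\|_\infty\|\chi_{\Pi_n}-\chi_\Pi\|_{L^1}$ then yield $\mathcal T_k(\Pi)\le\liminf_n\mathcal T_k(\Pi_n)=m<0$. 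Hence $\Pi$ is a global minimizer of $\mathcal T_k$ in this enlarged class, and, since $\mathcal T_k(\emptyset)=0$ and $\mathcal T_k(M')=\int_{M'}\sigma>0$ by \eqref{positivi}, $\Pi$ has positive measure and is not all of $M'$.

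It remains to upgrade $\Pi$ to a smooth element of $\mathcal F_+$ and to identify its boundary. The key point is that $\Pi$ is an \emph{almost minimizer} of the perimeter: replacing $\Pi$ by $\Pi'$ inside a geodesic ball of radius $r$ changes the $\sigma$--term by at most $\|\sigma\|_\infty|\Pi\triangle\Pi'|$, and by the isoperimetric inequality on $M'$ this is bounded by $Cr\,\bigl|\operatorname{Per}(\Pi')-\operatorname{Per}(\Pi)\bigr|$, so $\operatorname{Per}(\Pi)\le\operatorname{Per}(\Pi')+Cr\,\bigl|\operatorname{Per}(\Pi')-\operatorname{Per}(\Pi)\bigr|$ for competitors differing only in small balls. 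In the two--dimensional ambient $M'$ this forces the reduced boundary $\partial^*\Pi$ to be a $C^{1,\alpha}$ embedded $1$--manifold with empty singular set, and elliptic bootstrapping on the prescribed--geodesic--curvature equation upgrades it to a smooth finite disjoint union of simple closed curves; thus $\Pi\in\mathcal F_+$ genuinely and attains $m$. Finally, a normal perturbation $\varphi N$ of $\partial\Pi$ gives, via the length first variation $-\int_{\partial\Pi}\kappa_g\varphi\,ds$ and the identity $\tfrac{d}{dt}\int_{\Pi_t}\sigma=\int_{\partial\Pi}\sigma(N,\dot\gamma)\,\varphi\,ds$ (a consequence of Stokes and $d\sigma=0$), the Euler--Lagrange equation $\sqrt{2k}\,\kappa_g=b$ along $\partial\Pi$, where $\sigma=b\cdot(\text{area form of }g)$ and the sign is fixed by the orientation. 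This is precisely the equation satisfied by a magnetic geodesic of energy $k$ once $\partial\Pi$ is parametrized with constant speed $\sqrt{2k}$; hence each boundary component of $\Pi$ is a simple closed magnetic geodesic for $\sigma$ on $M'$.

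I expect the genuine difficulty to lie entirely in the regularity step: the direct method by itself yields only a finite--perimeter (or film, or integral--current) minimizer, and the real content of \cite{Tai93} and \cite{CMP04} is exactly to prove that such a minimizer is a smooth embedded surface with smooth embedded boundary, so that the smooth first variation argument above becomes legitimate. A secondary point requiring care is the passage to the finite cover and the verification that simplicity of the boundary curves — which can fail on $M$ itself — is actually realized on $M'$, which is where residual finiteness of $\pi_1(M)$, already used in the proof of Lemma \ref{lem:tau}, re-enters.
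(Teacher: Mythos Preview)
The paper does not give its own proof of this theorem: immediately before the statement it writes ``For the proof we refer to \cite{Tai92a} for the case $M=S^2$ and to \cite{Tai93} for the general case. The reader can also take a look at \cite{CMP04} for a new proof using methods of geometric measure theory.'' So there is nothing to compare against inside the paper; the result is imported as a black box.

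Your sketch is a reasonable outline of the \cite{CMP04} route (Caccioppoli sets / integral currents rather than Taimanov's films), and you correctly identify that the real content lies in the regularity step. Two small points, though. First, your almost-minimizer estimate is misstated: the isoperimetric inequality does not give $|\Pi\triangle\Pi'|\le Cr\,|\operatorname{Per}(\Pi')-\operatorname{Per}(\Pi)|$. What one actually uses is simply $|\Pi\triangle\Pi'|\le |B_r|\le Cr^2$ whenever $\Pi\triangle\Pi'\subset B_r$, which from $\mathcal T_k(\Pi)\le\mathcal T_k(\Pi')$ yields $\operatorname{Per}(\Pi)\le\operatorname{Per}(\Pi')+\tfrac{\|\sigma\|_\infty}{\sqrt{2k}}\,Cr^2$; this is the standard $(\Lambda,r_0)$-minimality that feeds into the regularity theory. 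Second, residual finiteness of $\pi_1(M)$ does \emph{not} re-enter here: once you have an embedded surface $\Pi\subset M'$ with smooth embedded boundary, each boundary component is automatically a simple closed curve on $M'$. Residual finiteness was used in Lemma~\ref{lem:tau} only to compare $\tau_+(g,\sigma)$ with $c_+(g,\sigma)$; for the present theorem the passage to the cover comes straight from the definition $\tau_+(g,\sigma)=\sup_{M'}\tau_+(M',g,\sigma)$.
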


Now we show how, in the case $k<\tau_+(g,\sigma)$, from this result we can get the existence of a closed magnetic geodesic which is a local minimizer for $S_k$.

\begin{lemma}
Suppose $k<\tau_+(g,\sigma)$. Then, there is a closed magnetic geodesic $\gamma$ with energy $k$ which is a local minimizer of $S_k$ in $\Lambda$. Furthermore, if $\gamma$ is contractible, we may suppose that $S_k(\gamma)<0$.
\label{localminimizers}
\end{lemma}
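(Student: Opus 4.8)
The plan is to extract from Taimanov's minimizer $\Pi$ (on the finite cover $M'$) a single boundary component $\gamma'$ that already behaves like a local minimizer upstairs, then to push it down to $M$ and upgrade this to a genuine local minimizer of $S_k$ on $\Lambda$. First I would invoke Theorem \ref{Taimanov92} to obtain, for the given $k<\tau_+(g,\sigma)$, the finite cover $p':M'\to M$ and the embedded positively oriented surface $\Pi$ with $\mathcal T_k(\Pi)<0$ whose boundary components are simple closed magnetic geodesics of energy $k$. Since $\mathcal T_k(\Pi)<0$ while $\mathcal T_k(\emptyset)=0$, at least one boundary component, call it $\gamma'$, is non-constant; parametrize it with speed $\sqrt{2k}$ so that it is a critical point of the free-period functional $S_k^{M'}$ on $\Lambda(M')$ with $S_k^{M'}(\gamma')=\mathcal T_k(\gamma\text{-disc})$, using the elementary inequality $\sqrt{2k}\,\ell\le kT+\tfrac1{2T}\ell^2$ exactly as in the proof of Lemma \ref{lem:tau}.

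The next step is to argue that $\gamma'$ is a \emph{local minimizer} of $S_k^{M'}$, and then that its projection $\gamma:=p'\circ\gamma'$ is a local minimizer of $S_k$ on $\Lambda$. For the first point I would compare nearby loops $\beta$ in $\Lambda(M')$ with $\gamma'$: any $\beta$ close to $\gamma'$ in $H^1$ is homotopic to $\gamma'$ through a thin cylinder $C(\gamma',\beta)$ contained in a small neighborhood $V\not=M'$ on which $\sigma=d\theta$ with $\|\theta\|$ bounded, so $S_k^{M'}(\beta)-S_k^{M'}(\gamma')=\overline{S}_k(\beta)-\overline{S}_k(\gamma')$ with $\overline{S}_k$ the \emph{exact} action functional of \eqref{eq:locac}; hence the problem is reduced to the classical fact that a simple closed magnetic geodesic bounding a piece of a $\mathcal T_k$-minimizing surface is a local minimizer of the exact free-period action — this is the content of Taimanov's construction (and of \cite{CMP04}), and I would cite it. For the descent to $M$: the covering map $p'$ is a local isometry, so the exponential-chart identification $i_n$ of Section \ref{sec:actloc} commutes with $p'$, and a neighborhood of $\gamma$ in $\Lambda(M)$ lifts to a neighborhood of $\gamma'$ in $\Lambda(M')$ on which $S_k$ and $S_k^{M'}$ agree up to the constant $-n\int_0^1(x')^*\theta$ of \eqref{eq:locex}; strict comparison is preserved, so $\gamma$ is a local minimizer of $S_k$ in $\Lambda$.

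Finally, the clause about the contractible case: if $\gamma$ is contractible in $M$ then $C(\gamma)$ is a capping disc $D$, so by \eqref{primadefsn} and the inequality above $S_k(\gamma)\le \mathcal T_k(D)=\mathcal T_k(\Pi)<0$ once we note that the relevant disc is (a component of) $p'(\Pi)$; more carefully, $S_k(\gamma)=S_k^{M'}(\gamma')<0$ was already established, so nothing further is needed. I expect the main obstacle to be the second step — transferring ``global minimizer of $\mathcal T_k$ among surfaces'' into ``local minimizer of the free-period action $S_k$ among $H^1$-loops'': one must ensure that an arbitrary $H^1$-loop near $\gamma'$ (not just an embedded curve bounding a nearby surface) has action at least $S_k^{M'}(\gamma')$, which requires the localization \eqref{eq:locex}--\eqref{eq:locac} together with the planar/surface structure in dimension two that lets one compare via the cylinder $C(\gamma',\beta)\subset V$; this is precisely where orientability and $\dim M=2$ enter, and where I would lean on \cite{Tai93,CMP04} rather than reprove the estimate.
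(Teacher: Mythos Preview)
Your overall architecture is correct and matches the paper: invoke Taimanov to get $\Pi$ on a finite cover, show a boundary component is a local minimizer of the free-period action, project down, and then treat the contractible case separately. For the local-minimizer step the paper does not cite the result but gives a direct two-line computation: replacing the boundary component $\gamma$ by a nearby embedded curve $y$ produces a competitor $\Pi_y$, and one checks $S_k(y)-S_k(\gamma)\geq\mathcal T_k(\Pi_y)-\mathcal T_k(\Pi)\geq0$ using only the elementary inequality $\sqrt{2k}\,\ell\leq kT+\ell^2/(2T)$ and the fact that $\int_{C(y)}\sigma-\int_{C(\gamma)}\sigma=\int_{\Pi_y\setminus\Pi}\sigma$. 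This is cleaner than your plan of localizing to $\overline S_k$ and then citing the exact case, and it makes the role of the global minimality of $\Pi$ transparent.

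The genuine gap is in your treatment of the contractible clause. You assert ``$S_k(\gamma)\leq\mathcal T_k(D)=\mathcal T_k(\Pi)<0$'' and then ``$S_k^{M'}(\gamma')<0$ was already established'', but neither is justified. If $\gamma'$ is contractible then $S_k^{M'}(\gamma')=\sqrt{2k}\,l(\gamma')+\int_D\sigma=\mathcal T_k(D)$ where $D$ is the capping disc of $\gamma'$; there is no reason this equals $\mathcal T_k(\Pi)$, since $\Pi$ may have several boundary components and $D$ need not coincide with $\Pi$ (indeed $D$ could sit inside $\Pi$ or in its complement). Nothing you wrote forces $\mathcal T_k(D)<0$. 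The paper's fix is an additional combinatorial argument: among all global minimizers of $\mathcal T_k$ choose one with the \emph{fewest} boundary components, say $\gamma_1,\dots,\gamma_n$ bounding discs $D_1,\dots,D_n$; then there is some $i_*$ with either $D_{i_*}\subset\Pi$ or $D_{i_*}\cap\Pi=\gamma_{i_*}$, and in each case the identity $\mathcal T_k(\Pi)=\mathcal T_k(\Pi')+S_k(\gamma_{i_*})$ (with $\Pi'=\Pi\setminus D_{i_*}$ or $\Pi\cup\overline{D}_{i_*}$) together with minimality of $\mathcal T_k(\Pi)$ gives $S_k(\gamma_{i_*})\leq0$, and the strict inequality follows from the minimality of the number of boundary components. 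Without this extra step your proof of the negative-action clause does not go through.
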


\begin{proof}
If $k<\tau_+(g,\sigma)$, then by Theorem \ref{Taimanov92} we get the existence of a positively oriented embedded surface $\Pi$ which is a global minimizer of $\mathcal T_k$ in the space of positively oriented embedded surfaces on $M'$ and such that $\mathcal T_k(\Pi)<0$. 
By (\ref{positivi}) it follows that $\Pi$ has non-empty boundary. Each of its boundary components is then a simple magnetic geodesic for the lifted flow. Take a boundary component $\gamma$ and let $y$ be a simple closed curve which is $C^1$-close to $\gamma$. Consider 
the surface $\Pi_y$ obtained from $\Pi$ by changing the boundary component $\gamma$ with $y$. Since $\Pi$ is a global minimizer of $\mathcal T_k$ among the space of positively oriented embedded surfaces, we get
\begin{equation}
\mathcal T_k (\Pi_y)\ \geq \ \mathcal T_k(\Pi)\, .
\label{minimizer}
\end{equation}
Observe that 
\begin{align*}
\mathcal T_k(\Pi_y) - \mathcal T_k(\Pi) &=\ \sqrt{2k} \cdot l(\partial \Pi_y) + \int_{\Pi_y} \sigma -\sqrt{2k}\cdot l(\partial \Pi) - \int_\Pi \sigma \\
&=\ \sqrt{2k}\cdot \Big [l(y)-l(\gamma)\Big ] + \int_{\Pi_y\setminus \Pi} \sigma\,.
\end{align*}
Hence,
\begin{align*}
S_k(y)-S_k(\gamma) &\stackrel{\mbox{}^{(\star)}}{\geq} \ \sqrt{2k} \cdot \Big [ l(y)-l(\gamma)\Big ] + \int_{C(y)} \sigma - \int_{C(\gamma)} \sigma \\
&\stackrel{\mbox{}^{(\star\star)}}{=} \sqrt{2k} \cdot \Big [ l(y)-l(\gamma)\Big ] + \int_{\Pi_y\setminus \Pi}\!\! \sigma\ = \ \mathcal T_k(\Pi_y)-\mathcal T_k(\Pi)\, ,
\end{align*}
where in the inequality $(\star)$ we have used again the elementary estimate 
\[\frac{1}{2T} \, \ell^2 + kT \ \geq \ \sqrt{2k}\, \ell\]

\noindent and the fact that 
\[S_k(\gamma) \ = \ \sqrt{2k}\, l(\gamma) + \int_{C(\gamma)} \sigma\]

\noindent since $\gamma$ has constant speed, being a magnetic geodesic. The equality $(\star\star)$ is a consequence of the fact that the form $\sigma$ is exact on each 2-torus.

Therefore, it follows from (\ref{minimizer}) that each boundary component of $\Pi$ is a local minimizer for the functional $S_k$ in $C^1(\T,M')\times (0,+\infty)$. Since the projection map 
\[C^1(\T,M')\times (0,+\infty) \ \longrightarrow C^1(\T,M)\times (0,+\infty)\]
is open, the projection of each boundary component of $\Pi$ defines a closed magnetic geodesic on $M$ which is a local minimizer for $S_k$ on the space $C^1(\T,M)\times (0,+\infty)$. Notice that these closed magnetic geodesics might not be simple after the projection.

We observe now that (strict) local minimizers in $C^1(\T,M)\times (0,+\infty)$ are also (strict) local minimizers in $H^1(\T,M)\times (0,+\infty)=\Lambda$. This follows from \cite[Lemma 2.1]{AMP13}, since, as we have seen in Section \ref{sec:actloc}, locally $S_k$ reduces to a classical Lagrangian action functional. Hence, we conclude that $\gamma$ is a minimizer of $S_k$ on $\Lambda$.

From the discussion above we see that the lemma is proven if we can pick $\gamma$ to be non-contractible. If this is not possible, it means that the boundary components of all global minimizers of $\mathcal{T}_k$ on $M'$ are contractible curves. Pick one of these global minimizers $\Pi$ with the minimal number, say $n$, of boundary components; notice that $n\geq 1$ by (\ref{positivi}). Recall that in the contractible case we made the natural choice of taking a constant loop as the reference loop in the definition of $S_k$. Denote by $\gamma_1,\ldots,\gamma_n$ the connected components of the boundary of $\Pi$ parametrized by $\sqrt{2k}$ times the arc-length. Let $\{D_1,\ldots,D_n\}$ be the closed embedded discs bounded by the $\gamma_i$'s respectively. Clearly there exists an $i_*$ such that one of the two alternative holds:

\begin{enumerate}
\item $D_{i_*}\subseteq \Pi$ ($D_{i_*}$ is positively oriented), or
\item $D_{i_*}\cap\Pi=\gamma_{i_*}$ ($D_{i_*}$ is negatively oriented).
\end{enumerate}
In the first case,  $\Pi\setminus D_{i_*}$ is a (possibly empty) positively oriented embedded surface and
\[\mathcal T_k(\Pi)\ =\ \mathcal T_k(\Pi\setminus D_{i_*})\ +\ S_k(\gamma_{i_*})\,.\]
Since $\Pi$ is an \textit{absolute} minimizer, we get that $S_k(\gamma_{i_*})\leq0$. We claim that, actually, $S_k(\gamma_{i_*})<0$. Otherwise, assume by contradiction $S_k(\gamma_{i_*})=0$, then $\Pi\setminus D_{i_*}$ is also a global minimizer for $\mathcal T_k$  but its boundary has one connected component less than $\Pi$, which is a contradiction.

In the second case, let $\overline{D}_{i_*}$ be the disc $D_{i_*}$ taken with the opposite orientation. Then, $\Pi\cup \overline{D}_{i_*}$ is a positively oriented embedded surface and
\[\mathcal T_k(\Pi)\ =\ \mathcal T_k(\Pi\cup \overline{D}_{i_*})\ +\ S_k(\gamma_{i_*})\,.\]
As before, we conclude that $S_k(\gamma_{i_*})<0$.

In both cases, projecting to $M$ if necessary we get the existence of a local minimizer of $S_k$ with negative action and the lemma follows.
\end{proof}
\medskip

We conclude this section mentioning the fact that, since $M$ is an orientable surface, a closed curve in $M$ which is a (strict) local minimizer of $S_k$ on $\Lambda$ remains a (strict) local minimizer also when iterated.
Counterexamples to this statement for geodesic flows in dimension bigger than two or on non-orientable surfaces are described in \cite{Hed32} and \cite[Example 9.7.1]{KH95}, respectively. 

\begin{lemma}
If $\alpha$ is a (strict) local minimizer of $S_k$ on $\Lambda$, then for every $n\geq 1$ the $n$-th iterate $\alpha^n$ is also a (strict) local minimizer of $S_k$ on $\Lambda$.
\label{persistenceoflocalminimizers}
\end{lemma}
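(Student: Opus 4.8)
The plan is to reduce the statement to the classical persistence-of-minimizers result for the Lagrangian free-period action functional on orientable surfaces, using the local description of $S_k$ established in Section \ref{sec:actloc}. The key point is Equation \eqref{eq:locex}: near the critical circle $\T\cdot\alpha^n$ the functional $S_k$ agrees, up to an additive constant, with the genuine Lagrangian action functional $\overline{S}_k$ associated to a bounded primitive $\theta$ of $\sigma$ on an open set $V\subsetneq M$ (equivalently, with the Tonelli Lagrangian $L(q,v)=\tfrac12|v|^2+\theta_q(v)$). Since adding a constant does not affect the property of being a (strict) local minimizer, it suffices to prove the statement for $\overline{S}_k$, i.e.\ in the setting of an exact magnetic (Tonelli) Lagrangian on a surface, and this is exactly the content of the persistence result recorded in \cite{AMMP14} (building on \cite{AMP13} and ultimately on the surface arguments behind \cite[Example 9.7.1]{KH95}).

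First I would set up the local model carefully. Let $\alpha=(x,T)$ be a (strict) local minimizer; if $\alpha$ is non-constant, choose the open neighborhood $V=\exp(U)\subsetneq M$ of $x(\T)$ and the bounded primitive $\theta\in\Omega^1(V)$ with $d\theta=\sigma$ as in Section \ref{sec:actloc}. Because $x(\T)$ is compact and $V$ is open, for every $n$ we have $x^n(\T)=x(\T)\subset V$, so the same $V$ and $\theta$ work for all iterates $\alpha^n=(x^n,nT)$ simultaneously. Via the chart $i_n$, on an open neighborhood of $\T\cdot\alpha^n$ in $\Lambda_{\nu^n}$ the functional $S_k$ equals $\overline{S}_k-n\int_0^1 x^*\theta$, where $\overline{S}_k$ is the Lagrangian action functional \eqref{eq:locac} for $L$. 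Hence $\alpha$ is a (strict) local minimizer of $S_k$ iff it is one of $\overline{S}_k=S^L_k$, and likewise for each $\alpha^n$.

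Next I would invoke the known persistence result for the exact case. On an orientable surface the $n$-th iterate of a (strict) local minimizer of the Lagrangian free-period action functional $S^L_k$ is again a (strict) local minimizer; this is \cite[Lemma 2.1]{AMP13} together with the iteration analysis in \cite[Section 1]{AMMP14}. Concretely, one lifts to an annular neighborhood of the closed geodesic in the unit tangent bundle / to a cover where the loop becomes embedded, and uses that on an orientable surface a curve-shortening argument (replacing each point-preimage of a would-be competitor by a shorter arc) shows a competing loop of lower action near $\alpha^n$ would produce one near $\alpha$, contradicting minimality. This is precisely where orientability is essential: the cited \cite[Example 9.7.1]{KH95} shows that on a non-orientable surface a minimizing geodesic can fail to have minimizing iterates. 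Applying this to $S^L_k=\overline{S}_k$ and transporting back through $i_n$ and the constant shift gives the conclusion for $S_k$. If $\alpha$ is a constant loop the statement is immediate, since constant loops and their iterates coincide and the local structure of $S_k$ near a constant loop is that of $E+k$.

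The main obstacle is making the reduction to $\overline{S}_k$ completely rigorous, i.e.\ checking that the neighborhoods on which $S_k=\overline{S}_k+\text{const}$ holds can be chosen compatibly for all iterates and that ``local minimizer'' transfers correctly under the chart $i_n$ and between the $C^1$ and $H^1$ topologies (the latter handled by \cite[Lemma 2.1]{AMP13}, already used in Lemma \ref{localminimizers}). Once the local identification is in place, the heart of the matter is the surface-specific iteration lemma, which is quoted from \cite{AMMP14} rather than reproved here.
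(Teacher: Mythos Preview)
Your proposal is correct and follows essentially the same approach as the paper: reduce to the exact Lagrangian case via the local identification in Section~\ref{sec:actloc} (Equation~\eqref{eq:locex}) and then invoke the known persistence result from \cite{AMP13}. The only minor slip is the citation: the persistence-under-iteration statement is \cite[Lemma~3.1]{AMP13}, not Lemma~2.1 (which is the $C^1$/$H^1$ transfer you correctly use elsewhere); otherwise your write-up is actually more detailed than the paper's, which simply notes that the proof of \cite[Lemma~3.1]{AMP13} ``goes through without any change.''
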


The proof in \cite[Lemma 3.1]{AMP13} for the case of the Lagrangian action functional goes through without any change, thanks to the discussion in Section \ref{sec:actloc}.


\section{The contact property}\label{cha:con}

In the next two sections we will complete the proof of the Main Theorem. A decisive step will be to prove that some Palais-Smale sequence $(x_n,T_n)$ has an accumulation point (see Lemma \ref{lem:str}). As we have seen in Theorem \ref{teo:pal}, the crucial issue is that, in general, there might exist Palais-Smale sequences such that $T_n\rightarrow+\infty$. In the exact case, Contreras showed how this problem is related with the contact geometry of the level set \cite[Proposition F]{Con06}. Namely, he proved that every Palais-Smale sequence with energy $k$ has bounded period, provided the energy level $E^{-1}(k)$ is of contact type. Since being of contact type is an open condition, showing that $k\in(0,c(L))$ is an energy value of contact type allows to prove that there are infinitely many magnetic geodesics with \textit{every} fixed energy close to $k$.

In light of a possible generalization of Contreras' result to our setting, it is natural to try to find out which energy levels are of contact type, when the magnetic form is oscillating. However, so far only few facts are known in this case. The only positive result is Corollary 4.14 in \cite{Ben14} (see also \cite{Pat09}), which asserts that $E^{-1}(k)$ is of positive contact type for all large enough $k$. On the other hand, obstructions to the contact property are obtained by computing the action of the Liouville measure. Following \cite{Pat09}, there is an energy value $c_h(g,\sigma)$ called the \textit{helicity} such that
\begin{itemize}
	\item the inequality $0<c_h(g,\sigma)\leq c(g,\sigma)$ holds,
  \item the action of the Liouville measure on $E^{-1}(k)$ is positive for $k> c_h(g,\sigma)$ and negative for $k< c_h(g,\sigma)$.
\end{itemize}
Hence, $E^{-1}(k)$ cannot be of \textit{negative} contact type for $k\geq c_h(g,\sigma)$ and it cannot be of \textit{positive} contact type for $k\leq c_h(g,\sigma)$.

Since we are looking at the energy range $(0,\tau^*_+(g,\sigma))$, we could hope that low energy levels are of negative contact type, as happens when $\sigma$ is symplectic \cite{Pat09,Ben14}. However, the next proposition shows that, when the magnetic form is oscillating, low energy levels are never of \textit{negative} contact type.
\begin{proposition}
Let $(M,g,\sigma)$ be an oscillating magnetic system on an orientable Riemannian surface with genus at least two. For every small enough $k>0$ the energy level $E^{-1}(k)$ is not of negative contact type. As a corollary, $E^{-1}(k)$
is not of contact type. \label{nocontact}
\end{proposition}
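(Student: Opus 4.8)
The plan is to pit the negative--contact--type condition against a single, explicitly localized short periodic orbit, after reducing the problem modulo one topological constant. First I would record that, since $M$ has genus at least two, the restriction of $\omega$ to $E^{-1}(k)$ (the $\sqrt{2k}$--sphere bundle, a circle bundle $\pi\colon E^{-1}(k)\to M$) is \emph{exact}: in the Gysin sequence $\pi^*\colon H^2(M;\R)\to H^2(E^{-1}(k);\R)$ vanishes, because the Euler class $e$ of $TM$ has $\int_M e=\chi(M)\neq0$ (so it generates $H^2(M;\R)\cong\R$) while $\pi^*e=0$. Hence $E^{-1}(k)$ is of negative contact type iff $\omega|_{E^{-1}(k)}$ admits a primitive $\eta$ with $\eta(X)<0$ everywhere, $X$ being the generator of $\phi_t$. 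Then I would normalize the primitive: let $\lambda_g\in\Omega^1(TM)$ be the metric pull--back of the tautological $1$--form of $T^*M$ (so $d\lambda_g=\omega_g$ and $\lambda_g$ kills the fibre directions of $\pi$), and let $F$ be a fibre of $E^{-1}(k)$; since $(2g-2)[F]=0$ in $H_1$, writing $(2g-2)F=\partial W$ and using Stokes yields, for \emph{any} primitive $\eta$,
\[(2g-2)\int_F\eta\ =\ \int_W\omega_g+\int_{\pi_*W}\sigma\ =\ (2g-2)\int_F\lambda_g+m_0\int_M\sigma\ =\ m_0\int_M\sigma\, ,\]
where $\pi_*W=m_0[M]$ with $m_0\in\Z$ depending only on $M$. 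Thus $\int_F\eta=\kappa\cdot\int_M\sigma$ with $\kappa:=m_0/(2g-2)$ a number independent of the chosen primitive, of $k$, and of $\sigma$.

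Next I would introduce the orbit. Fix $x_0\in M$ with $\sigma_{x_0}<0$. For $k$ small the magnetic flow has near $x_0$ a closed magnetic geodesic $\gamma_k$ of energy $k$ which is an embedded contractible loop of diameter $O(\sqrt k)$ enclosing only points where $\sigma<0$, obtained by perturbing a Larmor circle of the frozen constant--field model. With $\hat\gamma_k=(\gamma_k,\dot\gamma_k)\subset E^{-1}(k)$, period $T_k$, and $w\in\{\pm1\}$ its winding around the fibre in a trivialization of $\pi$ over a disc $V\supset\gamma_k$, the traversal sense of a Larmor circle being fixed by the sign of the field, $w=\varepsilon_0\operatorname{sgn}\sigma_{x_0}$ for a universal $\varepsilon_0\in\{\pm1\}$. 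Writing $\sigma=d\beta$ on $V$ and $\eta_{\mathrm{loc}}:=\lambda_g+\pi^*\beta$, a primitive of $\omega|$ near $\hat\gamma_k$, and using $\lambda_g(\dot{\hat\gamma}_k)=|\dot\gamma_k|^2=2k$ and Stokes ($D_k$ the capping disc), I obtain
\[\int_{\hat\gamma_k}\eta_{\mathrm{loc}}\ =\ 2kT_k+\int_{D_k}\sigma\ =\ S_k(\gamma_k)\ >\ 0\, ,\]
the strict positivity because $T_k>0$ and $\int_{D_k}\sigma$ equals minus the ($\sigma$--)area of the enclosed disc, which is negative (and $\int_{D_k}\sigma>0$ likewise in the symplectic comparison below). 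Since, for an arbitrary global primitive $\eta$, the form $\eta-\eta_{\mathrm{loc}}$ is closed on $\pi^{-1}(V)\cap E^{-1}(k)\cong V\times S^1$ and $\int_F\eta_{\mathrm{loc}}=0$, I get $\int_{\hat\gamma_k}(\eta-\eta_{\mathrm{loc}})=w\int_F\eta$, hence, with the normalization,
\[\int_{\hat\gamma_k}\eta\ =\ S_k(\gamma_k)+w\kappa\int_M\sigma\, .\]

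Finally I would conclude by comparison. If $E^{-1}(k)$ were of negative contact type with primitive $\eta$, the left side of the last identity equals $\int_0^{T_k}\eta(X)\,dt<0$, so $w\kappa\int_M\sigma<-S_k(\gamma_k)<0$, and since $\int_M\sigma>0$ this forces $\varepsilon_0\operatorname{sgn}\sigma_{x_0}\cdot\operatorname{sgn}\kappa=-1$. Running the identical argument for any \emph{symplectic} magnetic form $\sigma_{\mathrm{ref}}>0$ on $M$ -- whose low energy levels \emph{are} of negative contact type by \cite{Pat09,Ben14}, and for which the same localized orbits and the same $\kappa$ apply -- gives $\varepsilon_0\cdot(+1)\cdot\operatorname{sgn}\kappa=-1$; in particular $\kappa\neq0$ and $\operatorname{sgn}\kappa=-\varepsilon_0$. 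But $\operatorname{sgn}\sigma_{x_0}=-1$ in our situation, so the relation becomes $\varepsilon_0\cdot(-1)\cdot(-\varepsilon_0)=\varepsilon_0^2=-1$, a contradiction. Hence $E^{-1}(k)$ is not of negative contact type for all small $k>0$. For the corollary, $E^{-1}(k)$ is connected, so a contact primitive has $\eta(X)$ of one sign; negative contact type has now been excluded and positive contact type is excluded for $k\le c_h(g,\sigma)$ by the helicity obstruction recalled above together with $c_h(g,\sigma)>0$, so $E^{-1}(k)$ is not of contact type for small $k$.

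The part I expect to cost real work is the existence of the localized orbit $\gamma_k$: one must know that for every (or at least almost every) small $k$ there is a short embedded contractible closed magnetic geodesic over a point of $\{\sigma<0\}$ with the forced traversal sense. This is classical -- $\gamma_k$ is a nondegenerate perturbation of the Larmor family, directly available near a nondegenerate minimum of $f$ (where $\sigma=f\,\omega_g$), and near the global minimum of $f$ via adiabatic confinement in general; failing that one invokes the existence of a contractible closed orbit on almost every low energy level \cite{Sch06} and upgrades ``almost every'' to ``every'' small $k$, since the $k$'s for which $E^{-1}(k)$ is of negative (resp.\ of) contact type form an open set, hence one that is empty near $0$. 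The only other delicate point is keeping the orientation conventions entering $\kappa$ and $w$ mutually consistent; the comparison with the symplectic case is precisely what makes the final sign deduction immune to them (one can also pin down $m_0=\pm1$, so $\kappa\neq0$, from the Poincar\'e--Hopf description of the fibre class).
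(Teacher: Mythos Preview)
Your argument is correct and arrives at the same conclusion as the paper, but by a genuinely different route. The paper proceeds more directly: it decomposes $\sigma=a_\sigma K\mu+d\beta$ with $a_\sigma=\int_M\sigma/(2\pi\chi(M))<0$, invokes an explicit formula (from \cite{Ben14}) for the action of a null-homologous invariant measure,
\[\mathcal A_k(\zeta)=\int_{E^{-1}(k)}\big[2k+\beta_x(v)+a_\sigma f(x)\big]\,d\zeta\,,\]
and then, exactly as you do, produces via Ginzburg's functional a short periodic orbit $\gamma_k$ localized near the minimum of $f$ (where $f<0$); plugging the associated measure into the formula gives $\mathcal A_k(\zeta_k)\geq 2k-\Vert\beta\Vert\sqrt{2k}+a_\sigma b>0$ for small $k$, since $a_\sigma b>0$. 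So the paper reads off the positivity of the action from an explicit identity, whereas you first isolate the universal topological constant $\kappa$ and then pin down its sign by comparison with a symplectic reference system for which negative contact type is already known. Your approach has the virtue of making transparent that the obstruction is governed by a single topological number once one knows $S_k(\gamma_k)>0$, and of avoiding the cited action formula; the paper's approach is shorter and yields a quantitative lower bound. Both need the same analytic input, namely Ginzburg's localized orbit, and both use the helicity $c_h(g,\sigma)>0$ to exclude positive contact type in the corollary.

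One small caveat: your suggested fallback to \cite{Sch06} would not work as written, since that result only produces \emph{some} contractible orbit, not one sitting over $\{\sigma<0\}$, and your sign argument needs precisely that localization to determine $w$. This does not affect the proof, because the primary line through Ginzburg's functional (which the paper also uses) already supplies the localized orbit for every small $k$.
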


\begin{proof}
Define $\overline{\sigma}:=K\mu\in\Omega^2(M)$, where $K$ is the Gaussian curvature and $\mu$ is the area form of $g$. There exists $\beta\in\Omega^1(M)$ such that $\sigma=a_\sigma\overline{\sigma}+d\beta$, where
\[
a_\sigma\ :=\ \frac{\int_M\sigma}{\int_M\overline{\sigma}}\ =\ \frac{\int_M\sigma}{2\pi\chi(M)}\ <\ 0
\]

\noindent by the Gauss-Bonnet Theorem. Following the computation in \cite[Proposition 4.5]{Ben14}, the action of a null-homologous $\phi_t$-invariant measure $\zeta$ on $E^{-1}(k)$ is given by

\[
\mathcal A_k(\zeta)\:=\ \int_{E^{-1}(k)}\Big[2k+\beta_x(v)+a_\sigma f(x)\Big]\, d\zeta\,,
\]
where $f:M\rightarrow \R$ is defined by the equation $\sigma=f\mu$. Since $\sigma$ is oscillating, $\min f<0$.

The energy level $E^{-1}(k)$ is not of negative contact type, provided there exists a null-homologous $\phi_t$-invariant measure $\zeta$ on $E^{-1}(k)$ with $\mathcal A_k(\zeta)\geq0$. 
Denote by 
\[C^f_c:=\ \big \{x\in M\ \big |\ f(x)\leq c\big \}\]

\noindent the closed sublevels of $f$ and let $b\in(\min f,0)$ be a regular value of $f$. Construct now a function $f_*:M\rightarrow(-\infty,0)$ such that $f_*$ coincides with $f$ on the sublevel $C^f_b$ and $C^{f_*}_b=C^f_b$. Since $f_*$ is a negative function, we can define the Ginzburg action functional $G^{f_*}_k:E^{-1}(k)\rightarrow \R$ for all sufficiently small $k>0$ (see \cite{Gin87} for further details). This functional has the property that its critical points are the support of those periodic orbits $\gamma$ such that
\begin{equation}\label{leng}
l(\pi(\gamma))\ \leq \ C\sqrt{2k}\,, 
\end{equation}

\noindent for some $C>0$ independent of $k$. As is proved in \cite{Ben14} we can expand this functional in the parameter $k$ around zero getting
\[
G^{f_*}_k(x,v)\ =\ 2\pi-\frac{2\pi}{f_*(x)}k+o(k)\,.
\]

For every $k$ let now  $(x_k,v_k)$ be an absolute maximizer for $G^{f_*}_k$. By the expansion above for every $\varepsilon>0$ there exists $k_\varepsilon$ such that 
\[d\left (x_k,C^{f_*}_{\min f_*}\right )\ <\ \varepsilon\, , \ \ \ \text{for} \ k<k_\varepsilon\, .\]

\noindent Call $\gamma_k$ the periodic orbit through $(x_k,v_k)$. By Inequality \eqref{leng}, there is $k'_\varepsilon\leq k_\varepsilon$ such that $\pi(\gamma_k)\subset C^{f_*}_b=C^f_b$ if $ k<k'_\varepsilon.$
If $\zeta_k$ is the invariant measure supported on $\gamma_k$, then

\[
\mathcal A_k(\zeta_k)\ = \int_{E^{-1}(k)}\Big[2k+\beta_x(v)+a_\sigma f(x)\Big]\, d\zeta_k\ \geq \ 2k-\Vert\beta\Vert\sqrt{2k}+a_\sigma\cdot b.
\]
Since $a_\sigma\cdot b>0$, $\mathcal A_k(\zeta_k)\geq0$ for $k$ sufficiently small.
\end{proof}
\bigskip

In view of the previous proposition, we will really need Struwe's argument, contained in Lemma \ref{lem:str}, to prove the existence of converging Palais-Smale sequences (even if only on \textit{almost every} energy level).


\section{The minimax values}\label{sec:minmax}

In this section we follow \cite[Section 3]{AMMP14} and define the minimax values which allow us to prove the existence of infinitely many closed magnetic geodesics on almost every energy level below $\tau_+^*(g,\sigma)$. For the reader's convenience we keep the same notation as in the main reference.

For any $k\in (0,\tau^*_+(g,\sigma))\subseteq (0,c(g,\sigma))$, let $\alpha_k\in \Lambda$ be a local minimizer of $S_k$, whose existence
is guaranteed by Lemma \ref{localminimizers}. Recall that if $\alpha_k$ is contractible we may suppose $S_k(\alpha_k)<0$.
For the rest of the paper let us fix $k_*\in (0,\tau_+^*(g,\sigma))$. Without loss of generality we may assume that $\alpha_{k_*}$ is a strict local minimizer for $S_{k_*}$, as otherwise there trivially exist infinitely many geometrically distinct closed magnetic geodesics with energy $k_*$. By Lemma \ref{persistenceoflocalminimizers} all iterates $\alpha_{k_*}^n$ are also strict local minimizers. Denote by $\nu_*$ the free homotopy class of $\alpha_{k_*}$. Thanks to Lemma \ref{lem:equi} and to the fact that on a surface of 
genus at least two a free-homotopy class is either the trivial class or has infinite order, we can take $S_k$ to be $\N$-equivariant on $\bigcup_{n\in\N}\Lambda_{\nu^n_*}$. 
\medskip

Since $k_*$ is strictly smaller than $c(g,\sigma)$, by Lemma \ref{lem:unb} the functional $S_{k_*}$ is unbounded from below on any connected component of $\Lambda$ and hence there exists an element $\mu\in\Lambda_{\nu_*}$ such that 
\[S_{k_*} (\mu) \ <\ S_{k_*}(\alpha_{k_*})\, .\]

Thanks to Lemma \ref{lem:strict} we can find a bounded open neighborhood $\mathcal U\subset\Lambda_{\nu_*}$ of $\T\cdot \alpha_{k_*}$ such that
\begin{itemize}
\item its closure intersects the critical set of $S_{k_*}$ only in $\T\cdot \alpha_{k_*}\,$, 
\item the inequality $\displaystyle\inf_{\partial \mathcal U} \ S_{k_*}  \ > \ S_{k_*}(\alpha_{k_*})\,$ holds,
\item the period $T$ is bounded away from zero for all $(x,T)\in \mathcal U\,$.
\end{itemize}

For every $k\in (0,\tau_+^*(g,\sigma))$ let $M_k$ be the closure of the set of local minimizers of $S_k$ contained in $\mathcal U$. All the elements of $M_k$ are critical points for $S_k$ and they are also strict local minimizers, provided $M_k$ is a finite union of critical circles. The proof of the following lemma is analogous to the one of \cite[Lemma 3.1]{AMMP14}.

\begin{lemma}
There exists a closed interval $J=J(k_*)\subset (0,\tau_+^*(g,\sigma))$ whose interior contains $k_*$ and which has the following properties:

\begin{enumerate}
\item For every $k\in J$ the set $M_k$ is a non-empty compact set.

\item For every $k\in J$ there holds 
\[S_k(\mu) \ <\ \min_{M_k} \ S_k\, .\]

\item For every $k\in J$ there holds 
\[\sup_{k'\in J} \ \max_{M_{k'}} \ S_k \ < \ \inf_{\partial \mathcal U} \ S_k\, ;\]

if $\alpha_{k_*}$ is contractible, there also holds $ \displaystyle \ \ \sup_{k'\in J} \ \max_{M_{k'}} \ S_k < 0\,.$
\end{enumerate}
\label{lemma6.1}
\end{lemma}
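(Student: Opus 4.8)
The strategy is a continuity/compactness argument exploiting the monotonicity of $S_k$ in $k$ together with the fact that the critical set of $S_{k_*}$ meets $\overline{\mathcal U}$ only in the strict local minimizer $\T\cdot\alpha_{k_*}$. I would first address item (1). The set $M_{k_*}$ is, by construction, nonempty (it contains $\T\cdot\alpha_{k_*}$) and contained in the bounded open set $\mathcal U$ whose closure meets the critical set of $S_{k_*}$ only in $\T\cdot\alpha_{k_*}$; since the period $T$ is bounded away from zero on $\mathcal U$, any sequence in $M_{k_*}$ is a (bounded-period) Palais–Smale sequence, so by Theorem~\ref{teo:pal}(1) it subconverges, and the limit is a critical point in $\overline{\mathcal U}$, hence in $\T\cdot\alpha_{k_*}\subset\mathcal U$. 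Thus $M_{k_*}=\T\cdot\alpha_{k_*}$ is a compact critical circle sitting in the interior of $\mathcal U$. To propagate this to nearby $k$, I would argue by contradiction: if there were $k_n\to k_*$ with $M_{k_n}$ either empty or touching $\partial\mathcal U$, then in the first case the absence of local minimizers in $\mathcal U$ for $S_{k_n}$ would contradict the fact that $\T\cdot\alpha_{k_*}$ is a \emph{strict} local minimizer (small perturbations in $k$ of $S_{k_*}$ still have a local minimizer near $\T\cdot\alpha_{k_*}$, since on $\mathcal U$ the functional is, by Section~\ref{sec:actloc}, a genuine Lagrangian action functional and local-minimizer persistence under $C^2$-small perturbations is standard); in the second case we would extract a Palais–Smale sequence for $S_{k_*}$ accumulating on $\partial\mathcal U$, again contradicting that $\overline{\mathcal U}$ contains no critical point of $S_{k_*}$ other than $\T\cdot\alpha_{k_*}$. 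This yields a closed interval $J$ around $k_*$ on which $M_k\neq\emptyset$, is compact, and stays in the interior of $\mathcal U$.

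For item (2), recall $S_{k_*}(\mu)<S_{k_*}(\alpha_{k_*})$ by choice of $\mu$, and $\min_{M_{k_*}}S_{k_*}=S_{k_*}(\alpha_{k_*})$ since $M_{k_*}=\T\cdot\alpha_{k_*}$. The map $k\mapsto S_k(\mu)$ is continuous (indeed affine, from \eqref{primadefsn}, with slope the period of $\mu$), and I claim $k\mapsto\min_{M_k}S_k$ is lower semicontinuous on $J$: if $k_n\to k$ and $\beta_n\in M_{k_n}$ realize the minimum, then $\{\beta_n\}$ has bounded period and $\|dS_{k_n}(\beta_n)\|=0$, so after passing to a subsequence $\beta_n\to\beta\in M_k$ and $S_{k_n}(\beta_n)\to S_k(\beta)\geq\min_{M_k}S_k$ — combined with $S_{k_n}\to S_k$ uniformly on the bounded set $\mathcal U$, this gives the semicontinuity. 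Hence the strict inequality at $k_*$ persists on a possibly smaller closed subinterval, which we rename $J$.

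For item (3), the relevant quantity is $(k,k')\mapsto\max_{M_{k'}}S_k$. Upper semicontinuity of $k'\mapsto M_{k'}$ (as a consequence of the compactness argument above — limits of elements of $M_{k_n}$ lie in $M_k$), joint continuity of $(k,\beta)\mapsto S_k(\beta)$, and compactness of $\overline{\mathcal U}$ show that $(k,k')\mapsto\max_{M_{k'}}S_k$ is upper semicontinuous on $J\times J$; meanwhile $k\mapsto\inf_{\partial\mathcal U}S_k$ is continuous (again uniform convergence $S_k\to S_{k_*}$ on $\overline{\mathcal U}$, which is compact). At $k'=k=k_*$ we have $\max_{M_{k_*}}S_{k_*}=S_{k_*}(\alpha_{k_*})<\inf_{\partial\mathcal U}S_{k_*}$ by the defining property of $\mathcal U$, so the inequality $\sup_{k'\in J}\max_{M_{k'}}S_k<\inf_{\partial\mathcal U}S_k$ holds after shrinking $J$ once more to a closed interval containing $k_*$ in its interior. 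If $\alpha_{k_*}$ is contractible, then $S_{k_*}(\alpha_{k_*})<0$ by Lemma~\ref{localminimizers}, and the same upper-semicontinuity argument (now comparing against the constant $0$) gives $\sup_{k'\in J}\max_{M_{k'}}S_k<0$ after a final shrinking. Intersecting the finitely many closed subintervals produced above yields the desired $J=J(k_*)$.

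\textbf{Main obstacle.} The crux is the compactness of $M_k$ for $k$ near $k_*$, i.e.\ ruling out escape of Palais–Smale sequences to the boundary $\partial\mathcal U$ or degeneration $T\to0$. The period issue is handled by the third bullet in the construction of $\mathcal U$ (periods bounded away from zero there), so one always lands in case (1) of Theorem~\ref{teo:pal}; the boundary issue is handled by the first bullet (no critical points of $S_{k_*}$ on $\partial\mathcal U$) together with uniform convergence $S_k\to S_{k_*}$ on $\overline{\mathcal U}$. The persistence of \emph{strict} local minimizers under small variation of $k$, needed for $M_k\neq\emptyset$, is where one genuinely uses the local reduction of $S_k$ to a Lagrangian action functional from Section~\ref{sec:actloc}; everything else is semicontinuity bookkeeping.
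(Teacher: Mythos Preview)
Your approach is the standard one and is essentially what the paper invokes (the proof is deferred to \cite[Lemma~3.1]{AMMP14}, which proceeds by exactly this continuity/Palais--Smale compactness argument on a bounded neighborhood of the strict minimizer). One slip worth flagging: you twice assert that $\overline{\mathcal U}$ is compact, but $\overline{\mathcal U}$ is a closed bounded set in the infinite-dimensional Hilbert manifold $\Lambda$ and is \emph{not} compact. This does not actually damage your argument: the uniform convergence $S_k\to S_{k_*}$ on $\overline{\mathcal U}$ that you need follows from the identity $S_k-S_{k_*}=(k-k_*)\,T$ and the boundedness of $T$ on $\mathcal U$ (built into the choice of $\mathcal U$), not from compactness; and the continuity of $k\mapsto\inf_{\partial\mathcal U}S_k$ follows from this uniform convergence alone. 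Similarly, for the upper-semicontinuity step you do not need $\overline{\mathcal U}$ compact---what you actually use is that any sequence $\beta_n\in M_{k'_n}$ with $k'_n\to k_*$ is a bounded-period Palais--Smale sequence for $S_{k_*}$ and hence subconverges (Theorem~\ref{teo:pal}(1)) to a critical point in $\overline{\mathcal U}$, which by the first bullet in the construction of $\mathcal U$ must lie on $\T\cdot\alpha_{k_*}$. With that correction in place your proof is fine.
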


\vspace{2mm}

For every $n\in \N$ and every $k\in J$ we define
\[\mathcal P_n (k) := \ \Big \{ u\in C^0([0,1],\Lambda) \ \Big |\ u(0)\in \psi^n(M_k)\, ,\ u(1)=\mu^n\Big \}\]

\noindent as the set of continuous paths in $\Lambda$ which join the $n$-th iterate of some element in $M_k$ with the $n$-th iterate of $\mu$. They yield the minimax functions  
\[c_n:J\longrightarrow \R\, , \ \ \ \ c_n(k) := \ \inf_{u\in \mathcal P_n(k)}\ \max_{s\in [0,1]} \ S_k(u(s))\, .\]
Applying point \textit{(2)} of the previous lemma, we have 
\begin{equation}
c_n(k) \ \geq \ \min_{\psi^n(M_k)} \ S_k \ = \ n\cdot \min_{M_k} \ S_k \ >\ n\cdot S_k(\mu)\, .
\label{6.3}
\end{equation}

As already discussed in the introduction, in order to overcome the possible lack of the Palais-Smale condition we will use the fact that each of the minimax functions $c_n$ is monotone. Strictly speaking, we use the fact that a monotone function is differentiable at almost every point to construct bounded Palais-Smale sequences at almost every level $k$ (namely at each level where all the functions $c_n$ are differentiable). Unfortunately, this monotonicity property does not follow directly from the monotonicity of the functional $S_k$, since the minimax class $\mathcal P_n(k)$ does depend on $k$. However, monotonicity can be shown exactly as done in \cite{AMMP14} for the exact case and, hence, we omit the proof.

\begin{lemma}
For every $n\in \N$, $c_n$ is monotonically increasing on $J$.
\end{lemma}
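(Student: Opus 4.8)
The plan is to establish monotonicity of each $c_n$ by showing that for $k_1\le k_2$ in $J$ one has $c_n(k_1)\le c_n(k_2)$, which splits naturally into two comparisons: first comparing $S_{k_1}$ and $S_{k_2}$ along a fixed path, and second accounting for the fact that the minimax class $\mathcal P_n(k)$ itself varies with $k$ through the endpoint condition $u(0)\in\psi^n(M_k)$. For the first comparison, observe that from the definition \eqref{primadefsn} the functional $S_k(x,T)=T\int_0^1(\tfrac{1}{2T^2}|\dot x|^2+k)\,dt+\int_{C(x)}\sigma$ depends on $k$ only through the additive term $kT$, so that $k\mapsto S_k(\gamma)$ is (affine and) non-decreasing for every fixed $\gamma=(x,T)$, since $T>0$. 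Hence along any fixed path $u$, $\max_s S_{k_1}(u(s))\le\max_s S_{k_2}(u(s))$.

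The second, genuinely delicate point is that a path $u\in\mathcal P_n(k_2)$ need not lie in $\mathcal P_n(k_1)$, because its starting point $u(0)$ lies in $\psi^n(M_{k_2})$ rather than in $\psi^n(M_{k_1})$; the remedy is to prepend a short connecting path inside the neighborhood $\psi^n(\mathcal U)$ joining a point of $\psi^n(M_{k_1})$ to $u(0)\in\psi^n(M_{k_2})$ and to control the action of $S_{k_1}$ along this prepended piece. First I would fix $k_1\le k_2$ in $J$, take an arbitrary $u\in\mathcal P_n(k_2)$, and let $\alpha_2:=u(0)\in\psi^n(M_{k_2})\subset\psi^n(\mathcal U)$. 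Since $\psi^n(\mathcal U)$ is (path-)connected — it is the image under the embedding $\psi^n$ of the connected neighborhood $\mathcal U$ of $\T\cdot\alpha_{k_*}$ — I can pick any point $\alpha_1\in\psi^n(M_{k_1})$ and a path $w$ inside $\psi^n(\mathcal U)$ from $\alpha_1$ to $\alpha_2$. The concatenation $w\ast u$ then lies in $\mathcal P_n(k_1)$, so $c_n(k_1)\le\max_s S_{k_1}((w\ast u)(s))$. On the tail $u$ this maximum is $\le\max_s S_{k_2}(u(s))$ by the first comparison; on the prepended piece $w$, which lies entirely in $\psi^n(\mathcal U)$, I bound $S_{k_1}$ from above using point \textit{(3)} of Lemma \ref{lemma6.1}, which gives $\sup_{k'\in J}\max_{M_{k'}}S_{k_1}<\inf_{\partial\mathcal U}S_{k_1}$ and hence, after applying $\psi^n$ and using $\N$-equivariance \eqref{almequivariance} (recall $S_{k_1}(\psi^n(\gamma))=n\,S_{k_1}(\gamma)$), that the action along $w$ stays below $n\cdot\inf_{\partial\mathcal U}S_{k_1}$, while on the other hand \eqref{6.3} shows any competitor already achieves value $\ge n\cdot\min_{M_{k_1}}S_{k_1}$.

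To make the bound on $w$ harmless one must ensure the prepended piece never forces the max above what the tail $u$ already contributes. The cleanest way is to note that $u\in\mathcal P_n(k_2)$ already has $\max_s S_{k_2}(u(s))\ge c_n(k_2)\ge n\cdot\min_{M_{k_2}}S_{k_2}$, and by point \textit{(3)} applied with the roles of the energies as in \cite[Lemma 3.1]{AMMP14}, the value of $S_{k_1}$ anywhere on $\psi^n(\mathcal U)$ that we traverse is bounded by $n\sup_{k'\in J}\max_{M_{k'}}S_{k_1}$, which by \textit{(3)} is strictly less than $n\inf_{\partial\mathcal U}S_{k_1}$; one then checks this quantity does not exceed $\max_s S_{k_2}(u(s))$ for the specific $u$ at hand (or one chooses $w$ inside a sublevel of $S_{k_1}$, shrinking $\mathcal U$ as in the construction preceding Lemma \ref{lemma6.1}). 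Taking the infimum over all $u\in\mathcal P_n(k_2)$ then yields $c_n(k_1)\le c_n(k_2)$. The main obstacle is precisely this bookkeeping on the prepended path — verifying that attaching $w$ inside $\psi^n(\mathcal U)$ does not raise the minimax value — and it is handled exactly by the three properties of $J$ in Lemma \ref{lemma6.1}, which is why we refer to \cite{AMMP14} for the routine details rather than reproducing them here.
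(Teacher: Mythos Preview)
Your overall strategy --- prepend a connecting path inside the neighborhood $\mathcal U$ (after iterating by $\psi^n$) and then use the pointwise monotonicity $S_{k_1}\le S_{k_2}$ on the original path $u$ --- is indeed the one used in \cite{AMMP14}, to which the paper defers. However, the step where you control $S_{k_1}$ along the prepended path $w$ is not correct as written.

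You assert that ``the value of $S_{k_1}$ anywhere on $\psi^n(\mathcal U)$ that we traverse is bounded by $n\sup_{k'\in J}\max_{M_{k'}}S_{k_1}$'' and attribute this to Lemma~\ref{lemma6.1}\,(3). But that lemma only bounds $S_{k_1}$ \emph{on the sets $M_{k'}$}, not on all of $\mathcal U$; an arbitrary connecting path $w\subset\psi^n(\mathcal U)$ may well pass through points where $S_{k_1}$ is much larger than $n\inf_{\partial\mathcal U}S_{k_1}$. The parenthetical suggestion ``one chooses $w$ inside a sublevel of $S_{k_1}$'' is the right instinct, but you give no reason why such a path exists, and this is exactly the non-routine point.

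The way this is handled in \cite{AMMP14} is to \emph{construct} $w$ via the negative gradient flow of $S_{k_1}$: write $u(0)=\psi^n(\alpha_2)$ with $\alpha_2\in M_{k_2}$, and let $t\mapsto\Phi_t(\alpha_2)$ be the flow of $-\nabla S_{k_1}$. By Lemma~\ref{lemma6.1}\,(3) one has $S_{k_1}(\alpha_2)\le\max_{M_{k_2}}S_{k_1}<\inf_{\partial\mathcal U}S_{k_1}$, so the flow line cannot reach $\partial\mathcal U$ and hence stays in $\mathcal U$; along it $S_{k_1}$ is non-increasing, so $S_{k_1}(\Phi_t(\alpha_2))\le S_{k_1}(\alpha_2)$ for all $t$. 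Since the period is bounded and bounded away from zero on $\mathcal U$, the local Palais--Smale property (Theorem~\ref{teo:pal}) forces the flow line to accumulate on a critical point of $S_{k_1}$ in $\mathcal U$, which one checks lies in $M_{k_1}$ (this is part of the content of \cite[Lemma~3.1]{AMMP14}). Taking $w:=\psi^n$ of the reversed flow line then gives the required connecting path, and the estimate is clean:
\[
\max_w S_{k_1}\ =\ n\,S_{k_1}(\alpha_2)\ \le\ n\,S_{k_2}(\alpha_2)\ =\ S_{k_2}(u(0))\ \le\ \max_s S_{k_2}(u(s))\,,
\]
so prepending $w$ never raises the maximum above what $u$ already contributes. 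This is the missing ingredient in your argument; once it is in place, taking the infimum over $u\in\mathcal P_n(k_2)$ yields $c_n(k_1)\le c_n(k_2)$ as you intended.
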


\vspace{2mm}

If the strict local minimizer $\alpha_k$ is contractible, we need to know more about the behavior of the minimax functions $c_n$. The next lemma is a simple generalization of \cite[Lemma 3.4]{AMMP14} and is based on the so-called 
``technique of pulling one loop at a time'' introduced first by Bangert \cite{Ban80}. Observe that we can run the same proof as in \cite[Lemma 2.5]{AMP13} since 
\[S_k\big|_{\Lambda_0}\circ p\ =\ \widetilde S_k\, ,\]

\noindent where $p:\widetilde {\Lambda} \rightarrow \Lambda$ denotes the projection map. Indeed, no assumption on the compactness of $M$ is required in that argument. 

\vspace{5mm}

\begin{lemma}
Let $K_0,K_1 \subseteq \Lambda_0$ be compact sets and let 
\[\mathcal R_n := \ \Big \{u\in C^0([0,1],\Lambda_0)\ \Big |\ u(0)\in \psi^n (K_0)\, ,\ u(1)\in \psi^n(K_1)\Big \}\, .\]
Fix a number $k$ and set 
\[c_n:= \ \inf_{u\in \mathcal R_n} \max_{s \in [0,1]}\ S_k(u(s))\, .\]
Then, there exists a number $A\in \R$ such that 
\[c_n \ \leq \ n\cdot \max_{K_0\cup K_1} \ S_k \ + \ A \, ,\quad\forall \ n\in \N\, .\]
\label{Bangert80}
\end{lemma}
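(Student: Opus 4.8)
The plan is to reduce the statement to the classical Bangert estimate for the free-period action functional $\widetilde S_k$ on the universal cover, using the identity $S_k|_{\Lambda_0}\circ p=\widetilde S_k$. First I would lift the compact sets: since $K_0,K_1\subseteq\Lambda_0$ consist of contractible loops, each element $(x,T)\in K_i$ admits a capping disc and hence lifts to $\widetilde\Lambda$; by compactness one can choose continuous families of lifts and obtain compact sets $\widetilde K_0,\widetilde K_1\subseteq\widetilde\Lambda$ with $p(\widetilde K_i)=K_i$. A path $u\in\mathcal R_n$ connecting $\psi^n(K_0)$ to $\psi^n(K_1)$ inside $\Lambda_0$ lifts to a path $\tilde u$ in $\widetilde\Lambda$ connecting $\psi^n(\widetilde K_0)$ to (a translate of) $\psi^n(\widetilde K_1)$, and because $S_k|_{\Lambda_0}\circ p=\widetilde S_k$ we have $\max_s S_k(u(s))=\max_s\widetilde S_k(\tilde u(s))$. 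Conversely, projecting paths in $\widetilde\Lambda$ down to $\Lambda_0$ gives elements of $\mathcal R_n$ with the same action along the path. Hence $c_n=\inf\{\max_s\widetilde S_k(\tilde u(s))\}$ over the corresponding minimax class on $\widetilde M$.

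Next I would invoke Bangert's ``pulling one loop at a time'' technique (as in \cite[pages 86--87]{Ban80}, and exactly as carried out in \cite[Lemma 2.5]{AMP13} and \cite[Lemma 3.4]{AMMP14}) applied to $\widetilde S_k$. Given a path $\tilde u$ realizing the minimax value up to $\varepsilon$ for the $n=1$ problem connecting $\widetilde K_0$ to $\widetilde K_1$, one builds an admissible path for the iterated problem $\psi^n(\widetilde K_0)\to\psi^n(\widetilde K_1)$ as follows: first run the $n$-fold iterate of a fixed short path joining the two compacta (this contributes at most $n\cdot\max_{K_0\cup K_1}S_k$ plus a bounded overhead), then use the iteration and concatenation formula for $\widetilde S_k$ — recall $\widetilde S_k(\psi^n(\gamma))=n\cdot\widetilde S_k(\gamma)$ on contractible loops by Lemma \ref{lem:equi}, and $S_k$ is additive under concatenation by \eqref{eq:conc} — to pull the homotopy across one loop at a time while controlling the action increment by a constant independent of $n$. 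The crucial point, which is the content of Bangert's argument, is that inserting and deforming the "extra" loop costs only an amount bounded independently of $n$, because on $\widetilde M$ the functional $\widetilde S_k$ has a bounded primitive $\theta$ of $\tilde\sigma$ available (here $c(g,\sigma)<\infty$ is used), so the action of any short loop is uniformly bounded.

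Assembling these pieces, for every $\varepsilon>0$ one produces $u\in\mathcal R_n$ with $\max_s S_k(u(s))\le n\cdot\max_{K_0\cup K_1}S_k+A+\varepsilon$, where $A$ collects the bounded overhead from the fixed connecting path and from the loop-insertion steps; taking the infimum over $u$ and then letting $\varepsilon\to 0$ yields $c_n\le n\cdot\max_{K_0\cup K_1}S_k+A$. The main obstacle I anticipate is purely bookkeeping: verifying that the constant $A$ genuinely does not depend on $n$, which requires checking that each elementary "pull one loop" move changes the action by an amount bounded by a quantity depending only on $(g,\sigma)$, $k$, and the fixed path between $K_0$ and $K_1$ — this is exactly where one uses that $\theta$ is a bounded primitive on $\widetilde M$ and that the period $T$ can be taken bounded away from zero and above along the constructed paths. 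Since all the ingredients are present in the cited references (the compactness of $M$ played no role there, as noted in the excerpt), I would state that the proof is identical to \cite[Lemma 3.4]{AMMP14} and \cite[Lemma 2.5]{AMP13} and omit the detailed estimates.
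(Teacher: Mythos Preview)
Your proposal is correct and follows exactly the approach the paper takes: the paper does not give a self-contained proof either, but simply observes that the identity $S_k|_{\Lambda_0}\circ p=\widetilde S_k$ reduces the statement to the Lagrangian action functional $\widetilde S_k$ on $\widetilde M$, and that the argument of \cite[Lemma 2.5]{AMP13} (equivalently \cite[Lemma 3.4]{AMMP14}) then applies verbatim since compactness of the base plays no role. Your informal sketch of Bangert's ``pulling one loop at a time'' is slightly loose in places (e.g.\ the description of the initial contribution), but since you ultimately defer to the same references, this matches the paper's treatment.
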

We apply now this lemma to 
$\hat k =\max J$, $K_0=M_{\hat k}$ and $K_1=\{\mu\}$ and use the fact that $S_{\hat k}(\mu)$ and $\max_{M_{\hat k}} S_{\hat k}$ are negative (see Lemma \ref{lemma6.1}) to obtain
\[c_n(\hat k) \ \longrightarrow \ -\infty \ \ \ \ \text{for} \ \ n\longrightarrow +\infty\, .\]

\noindent In particular, since each $c_n$ is monotonically increasing in $k$, we get 
\begin{equation}
\lim_{n\rightarrow +\infty} \ c_n \ = \ -\infty \ \ \ \ \ \text{uniformly on} \ J\, .
\label{uniformity}
\end{equation}

In the non-contractible case, a simple topological argument will play the same role as \eqref{uniformity} in the proof of the Main Theorem (see the last section for further details).

\section{Proof of the main Theorem}\label{cha:fin}

Here we complete the proof of the Main Theorem following Section 3.3 in \cite{AMMP14}. By the discussion in the previous sections this will be an easy consequence of Theorem \ref{teoremafinale} below. 

We saw in Theorem \ref{Palaissmale} that Palais-Smale sequences for the functional $S_k$ at level $c$ in the same free homotopy class and with period bounded from above have a limit point, provided:

\begin{enumerate}
\item  They are not contractible, or
\item  They are contractible and $c\neq 0$.
\end{enumerate}

The next lemma shows that sequences satisfying such bound exist on almost every energy level close to $k_*$. The original argument is due to Struwe \cite{Str90} (see also \cite[Proposition 7.1]{Con06}). 
The formulation we give here is a generalization of Lemma 3.5 in \cite{AMMP14} to the weakly exact case and the proof contained therein goes through word by word. Let $c_n:J\rightarrow \R$ be the 
sequence of minimax functions defined in the previous section. In case the strict local minimizer $\alpha_{k_*}$ is contractible, we know by (\ref{uniformity}) that there exists $n_0\in \N$ such that all $c_n$'s are negative for $n\geq n_0$. 

\begin{lemma}\label{lem:str}
Let $k$ be an interior point of $J$ at which $c_n$ is differentiable and such that the set $M_k$ is a finite union of critical circles. If $\alpha_{k_*}$ is contractible suppose in addition that $n\geq n_0$. Then, for every open neighborhood $\mathcal V$ of 
\begin{equation*}
 \operatorname{crit} S_{k}\ \cap\  \big \{S_{k}=c_n(k)\big \}\, ,
\end{equation*}

\noindent there exists an element $v$ of $\mathcal P_{n}(k)$ such that $S_{k}(v(0))<c_n(k)$ and
\begin{equation*}
v([0,1])\ \subset \ \big \{S_{k}< c_n(k)\big \}\cup \mathcal V.
\end{equation*}

\noindent In particular $c_n(k)$ is a critical value of $S_{k}\big|_{\Lambda_{\nu_*^n}}$.
\label{lemma7.1}
\end{lemma}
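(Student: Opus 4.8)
The plan is to run Struwe's monotonicity argument. Since $c_n$ is monotonically increasing on $J$ (previous lemma) and $k$ is an interior point at which $c_n$ is differentiable, we may fix a sequence $k_j \nearrow k$ in the interior of $J$ with
\[
0 \ \le\ \frac{c_n(k) - c_n(k_j)}{k - k_j}\ \le\ c_n'(k) + 1
\]
for all $j$ large. First I would use the definition of the minimax value $c_n(k_j)$ to pick, for each $j$, a path $u_j \in \mathcal P_n(k_j)$ with $\max_{s} S_{k_j}(u_j(s)) \le c_n(k_j) + (k - k_j)$. Because $M_{k_j}$ and $M_k$ are both contained in the fixed bounded neighborhood $\mathcal U$ (and for $k$ we are assuming $M_k$ is a finite union of critical circles, hence compact) one can, after a small modification near $s=0$ using the gradient flow of $S_k$ and Lemma \ref{lem:strict}, arrange that $u_j \in \mathcal P_n(k)$ as well, at the cost of controlled changes to the action; here the key point is that $S_k$ and $S_{k_j}$ differ by the pointwise-controlled quantity $(k-k_j)\, T \int_0^1 dt$, so along $u_j$ one has
\[
S_k(u_j(s))\ =\ S_{k_j}(u_j(s)) + (k - k_j)\, T_j(s)\,,
\]
where $T_j(s)$ is the period coordinate of $u_j(s)$.

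The crux of the argument is to show that along the "almost-maximizing" portion of $u_j$ the period $T_j(s)$ stays bounded, uniformly in $j$. This is where the differentiability of $c_n$ at $k$ enters: if $S_{k_j}(u_j(s))$ is within $O(k-k_j)$ of $c_n(k_j)$, then combining the displayed identity with the difference quotient bound forces $(k - k_j)\, T_j(s) \le C (k - k_j)$, i.e.\ $T_j(s) \le C$ for a constant $C$ independent of $j$ on the set where $u_j(s)$ is close to being a maximizer. Away from that set one has $S_k(u_j(s)) \le c_n(k) - \delta$ for some $\delta > 0$, so those points already lie in $\{S_k < c_n(k)\}$. A standard deformation/minimax argument (deform $u_j$ by the negative gradient flow of $S_k$, which is relatively complete on the relevant region by Lemma \ref{positivecomplete}, using that $c_n(k) \ne 0$ in the contractible case since $n \ge n_0$) then produces, for $j$ large, a Palais–Smale sequence for $S_k$ at level $c_n(k)$ with periods bounded by $C$; the hypothesis that $M_k$ is a finite union of critical circles guarantees the two valleys persist for $S_k$ so that $c_n(k)$ is genuinely a minimax value of $S_k|_{\Lambda_{\nu_*^n}}$ and the deformation cannot be pushed below $c_n(k)$ entirely.

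By Theorem \ref{teo:pal} such a period-bounded Palais–Smale sequence subconverges (the level is nonzero, or the class is noncontractible), so $\operatorname{crit} S_k \cap \{S_k = c_n(k)\}$ is a nonempty compact set, and in particular $c_n(k)$ is a critical value of $S_k|_{\Lambda_{\nu_*^n}}$. To get the stated refinement — a path $v \in \mathcal P_n(k)$ with $S_k(v(0)) < c_n(k)$ and $v([0,1]) \subset \{S_k < c_n(k)\} \cup \mathcal V$ for any prescribed neighborhood $\mathcal V$ of that critical set — I would apply the usual deformation lemma relative to $\mathcal V$: outside $\mathcal V$ the gradient of $S_k$ is bounded below near level $c_n(k)$, so the negative gradient flow pushes a near-optimal path from $\mathcal P_n(k)$ either below level $c_n(k)$ or into $\mathcal V$, while the endpoint $u(0) \in \psi^n(M_k)$ can be moved strictly below $c_n(k)$ first because $\psi^n(M_k)$ consists of strict local minimizers and $c_n(k) > n \min_{M_k} S_k$ by \eqref{6.3}. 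I expect the main obstacle to be the uniform period bound in the second paragraph: one must carefully track how the $k$-derivative of the functional interacts with the minimax construction to convert differentiability of $c_n$ into an $L^\infty$ bound on the period along the relevant paths, which is precisely the content of Struwe's trick and is where the restriction to almost every $k$ originates.
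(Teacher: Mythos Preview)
Your strategy is exactly the Struwe monotonicity argument that the paper intends: it does not write out a proof but simply states that the argument of \cite[Lemma~3.5]{AMMP14} ``goes through word by word'' in the weakly exact setting, and your sketch reproduces that argument (near-optimal paths, the identity $S_k-S_{k'}=(k-k')T$, period bound on the almost-maximizing set, gradient-flow deformation using Lemma~\ref{positivecomplete} and Theorem~\ref{teo:pal}, then the relative deformation into $\{S_k<c_n(k)\}\cup\mathcal V$).

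There is, however, one concrete slip: the sequence should be taken with $k_j\searrow k$, not $k_j\nearrow k$. With $k_j<k$ and $u_j$ chosen near-optimal for $S_{k_j}$ as you do, the identity
\[
(k-k_j)\,T_j(s)\ =\ S_k(u_j(s))-S_{k_j}(u_j(s))
\]
gives an upper bound on $T_j(s)$ only if you also have an \emph{upper} bound on $S_k(u_j(s))$, and you do not: after arranging $u_j\in\mathcal P_n(k)$ you only know $\max_s S_k(u_j(s))\ge c_n(k)$. Taking instead $k_j\searrow k$ and $u_j\in\mathcal P_n(k_j)$ with $\max_s S_{k_j}(u_j(s))\le c_n(k_j)+(k_j-k)$, one has $S_k\le S_{k_j}$ and hence, on the set where $S_k(u_j(s))\ge c_n(k)-(k_j-k)$,
\[
(k_j-k)\,T_j(s)\ \le\ \big[c_n(k_j)+(k_j-k)\big]-\big[c_n(k)-(k_j-k)\big]\ \le\ \big(c_n'(k)+3\big)(k_j-k),
\]
which is the desired uniform period bound on exactly the right set. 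With this correction the rest of your outline is fine. One minor remark: you do not need to ``move'' $v(0)$ below level $c_n(k)$; since every path in $\mathcal P_n(k)$ must cross $\partial\big(\psi^n(\mathcal U)\big)$, Lemma~\ref{lemma6.1}(3) gives $c_n(k)\ge n\inf_{\partial\mathcal U}S_k>n\max_{M_k}S_k$, so any $v(0)\in\psi^n(M_k)$ already satisfies $S_k(v(0))<c_n(k)$.
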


\vspace{3mm}

\begin{theorem}
If the local minimizer $\alpha_{k_*}$ is strict, then the energy value $k_*$ has a neighborhood $J\subseteq (0,\tau_+^*(g,\sigma))$ such that for almost every $k\in J$ the energy level $E^{-1}(k)$ has infinitely many periodic orbits freely homotopic to some iteration of $\alpha_{k_*}$.
\label{teoremafinale}
\end{theorem}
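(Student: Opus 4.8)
### Proof proposal

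The plan is to follow the standard Struwe-monotonicity machinery, combining the local minimizers $\alpha_{k_*}^n$ with the sequence of minimax values $c_n(k)$ to produce, on almost every $k$ near $k_*$, infinitely many distinct periodic orbits.

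First I would fix the interval $J = J(k_*)$ provided by Lemma~\ref{lemma6.1} and consider the set $\mathcal{N}\subseteq J$ of energy levels $k$ at which \emph{all} the functions $c_n$ are simultaneously differentiable. Since each $c_n$ is monotone by the previous lemma, each is differentiable outside a set of measure zero, so $J\setminus\mathcal{N}$ is a countable union of null sets, hence null; thus $\mathcal{N}$ has full measure in $J$. I would also discard from $\mathcal{N}$ the (at most countable, hence null) set of $k$ for which $M_k$ fails to be a finite union of critical circles --- for such $k$ the conclusion of the theorem is immediate since then $M_k$ already contains infinitely many geometrically distinct closed magnetic geodesics with energy $k$. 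So from now on fix $k\in\mathcal{N}$ with $M_k$ a finite union of critical circles, and suppose for contradiction that $E^{-1}(k)$ has only finitely many periodic orbits freely homotopic to an iterate of $\alpha_{k_*}$; in particular, for every $n$ the set $\operatorname{crit} S_k\cap\Lambda_{\nu_*^n}$ is a finite union of isolated critical circles.

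Next I would show that, for each $n$ (with $n\geq n_0$ in the contractible case), $c_n(k)$ is a critical value of $S_k|_{\Lambda_{\nu_*^n}}$ and that the corresponding critical circle is of mountain-pass type. The value $c_n(k)$ is indeed critical by Lemma~\ref{lemma7.1}: differentiability of $c_n$ at $k$ lets us run Struwe's argument to build, for any neighborhood $\mathcal V$ of the critical set at level $c_n(k)$, a path $v\in\mathcal P_n(k)$ with $v(0)\in\psi^n(M_k)$ satisfying $S_k(v(0))<c_n(k)$ and $v([0,1])\subseteq\{S_k<c_n(k)\}\cup\mathcal V$. Because $\psi^n(M_k)$ consists of strict local minimizers (Lemma~\ref{persistenceoflocalminimizers}, using Lemma~\ref{lemma6.1}(3) and the $\N$-equivariance of $S_k$ on $\bigcup_n\Lambda_{\nu_*^n}$) and because $\mu^n=v(1)$ lies in a deeper sublevel by \eqref{6.3}, $c_n(k)$ is strictly above $S_k(\psi^n(M_k))$; the path $v$ together with Lemma~\ref{lem:strict} then exhibits $c_n(k)$ as a genuine mountain-pass value separating the ``valley'' near a point of $\psi^n(M_k)$ from $\mu^n$. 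Concretely: two points of $\{S_k<c_n(k)\}$ --- one near $\psi^n(M_k)$, one equal to $\mu^n$ --- lie in the same component of $\{S_k<c_n(k)\}\cup\mathcal{W}$ for $\mathcal W$ a small neighborhood of the critical set at that level, but if the critical set were \emph{not} of mountain-pass type (e.g.\ a single point of index $\leq 1$) one could push $\mathcal W$ into $\{S_k<c_n(k)\}$ and contradict that the two points are separated in $\{S_k<c_n(k)\}$ (which they are, by minimality of $c_n(k)$).

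Now comes the crux, which is where Theorem~\ref{iterationofmountainpasses} enters and which I expect to be the main obstacle to make fully rigorous: I must produce \emph{infinitely many geometrically distinct} orbits, not just one orbit per $n$, since a priori all the $c_n(k)$ could be realized by iterates of a single periodic orbit $\gamma$. Suppose that were the case --- that only finitely many geometrically distinct periodic orbits $\gamma_1,\dots,\gamma_m$ in $\bigcup_n\Lambda_{\nu_*^n}$ carry all the values $c_n(k)$. By Theorem~\ref{iterationofmountainpasses}, for each $\gamma_i$ and all sufficiently large $n$ the iterate $\gamma_i^n$ is \emph{not} of mountain-pass type: there is a neighborhood $\mathcal W$ of $\T\cdot\gamma_i^n$ such that connectedness in $\{S_k<S_k(\gamma_i^n)\}\cup\mathcal W$ already implies connectedness in $\{S_k<S_k(\gamma_i^n)\}$. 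Hence for all large $n$, no iterate $\gamma_i^n$ can realize the mountain-pass value $c_n(k)$. This forces $c_n(k)$ to be realized by an orbit of \emph{primitive} length bounded away from a fixed finite list --- but the orbits $\gamma_i$ and their iterates were assumed to exhaust the critical set, a contradiction. (In the contractible case one uses in addition \eqref{uniformity}: $c_n(k)\to-\infty$, so the $c_n(k)$ take infinitely many distinct values and in particular cannot all be accounted for by finitely many orbits and their iterates, whose action values under $\N$-equivariance would all be nonnegative multiples of the finitely many values $S_k(\gamma_i)$ --- hence bounded below along any fixed orbit. In the non-contractible case one argues instead that the free homotopy classes $\nu_*^n$ are pairwise distinct, so distinct $n$'s already give geometrically distinct orbits, after ruling out the finitely many $n$ for which $c_n(k)$ could come from a lower iterate via Theorem~\ref{iterationofmountainpasses}.) Either way we contradict the assumption that $E^{-1}(k)$ has only finitely many periodic orbits freely homotopic to an iterate of $\alpha_{k_*}$. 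This completes the proof for every $k\in\mathcal{N}$, i.e.\ for almost every $k\in J$.

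The delicate points I would need to verify carefully are: (i) that the Palais-Smale sequences delivered by Lemma~\ref{lem:str} actually have period bounded from above, so that Theorem~\ref{teo:pal} applies and $c_n(k)$ is genuinely critical --- this is exactly why differentiability of $c_n$ at $k$ is invoked, via Struwe's estimate on $\partial_k c_n$; (ii) that in the contractible subcase the level $c_n(k)$ is nonzero (guaranteed by $c_n(k)<0$ for $n\geq n_0$ from \eqref{uniformity}), so that case~(2) of the Palais-Smale dichotomy is available; and (iii) the bookkeeping in the last paragraph showing that finitely many orbits plus Theorem~\ref{iterationofmountainpasses} cannot account for all the $c_n(k)$. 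Point (iii) is the real heart of the multiplicity statement and I expect it to require the most care, exactly as in \cite[Section~3.3]{AMMP14}.
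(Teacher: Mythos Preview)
Your outline matches the paper's argument: restrict to the full-measure set where every $c_n$ is differentiable, assume there are only finitely many orbits $\gamma_1,\dots,\gamma_l$, obtain from Theorem~\ref{iterationofmountainpasses} a threshold $n_*$, pick a suitable $n$, and use Lemma~\ref{lem:str} to find a path $v\in\mathcal P_n(k)$ inside $\{S_k<c_n(k)\}\cup\mathcal V$, which the non-mountain-pass property of the neighborhoods $\mathcal W_{j,m}$ then pushes entirely into $\{S_k<c_n(k)\}$, contradicting the minimax definition of $c_n(k)$.

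Your point~(iii), however, is where the proposal actually goes wrong rather than just being sketchy. In the contractible case you claim the actions of the iterates are ``bounded below along any fixed orbit''; by $\N$-equivariance $S_k(\gamma_j^m)=m\,S_k(\gamma_j)$, which tends to $-\infty$ whenever $S_k(\gamma_j)<0$, so this is simply false. The paper's remedy is direct: set $a:=\min_j S_k(\gamma_j^{n_*-1})$ and use \eqref{uniformity} to pick $n\geq n_0$ with $c_n(k)<a$; then any $\gamma_j^m$ at level $c_n(k)$ automatically satisfies $m\geq n_*$, so every critical circle at that level is covered by Theorem~\ref{iterationofmountainpasses}. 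In the non-contractible case, ``distinct $n$'s already give geometrically distinct orbits'' is not the relevant statement---iterates of a single $\gamma_j$ land in different components $\Lambda_{\nu_*^n}$ but are \emph{not} geometrically distinct. The paper instead uses that $\nu_*$ has infinite order to choose one $n$ so that $\nu_*^n$ avoids each of the finitely many classes $[\gamma_j^m]$ with $1\leq m\leq n_*-1$; then every critical point of $S_k|_{\Lambda_{\nu_*^n}}$ is an iterate $\gamma_j^m$ with $m\geq n_*$, and the neighborhoods $\mathcal W_{j_i,m_i}$ from Theorem~\ref{iterationofmountainpasses} yield the contradiction exactly as you describe.

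A minor remark: you need not (and cannot easily) argue that the set of $k$ with $M_k$ infinite is countable; since the conclusion is immediate for those $k$, no measure estimate is required there.
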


\begin{proof}
Let $J=J(k_*)$ and  $c_n:J\rightarrow \R$ be as above. Since the countably many functions $c_n$ are monotone, it follows from the Lebesgue Theorem that the set of points $J'$ at which all the $c_n$ are differentiable has full measure in $J$. We shall prove that for every $k\in J'$ the energy level $E^{-1}(k)$ has infinitely many geometrically distinct periodic orbits.

Thus, pick $k\in J'$. If $M_k$ consists of infinitely many critical circles, then there is nothing to prove. Therefore, we may suppose that $M_k$ consists of only finitely many critical circles, which are local minimizers. Assume by contradiction that $E^{-1}(k)$ has only finitely many periodic orbits homotopic to some iteration of $\alpha_{k_*}$. Then, the critical set of $S_k$ restricted to $\bigcup_{n\in\N}\Lambda_{\nu^n_*}$ consists of finitely many critical circles 
\[\T \cdot \gamma_1\, , \ \T\cdot \gamma_2\, , \ldots \, , \ \T\cdot \gamma_l\, ,\]

\noindent together with their iterates $\T\cdot \gamma_j^n$, for $1\leq j\leq l$ and for every $n\in \N$. By Theorem \ref{iterationofmountainpasses} we can find a natural number $n_*$ such that the following is true: for every $n\geq n_*$ and 
for every $1\leq j \leq l$ there exists a neighborhood $\mathcal W_{j,n}$ of $\T\cdot \gamma_j^n$ such that any two points in $\{S_k<S_k(\gamma_j^n)\}$ which can be connected within 
\[\big \{S_k < S_k(\gamma_j^n) \big \} \cup \mathcal W_{j,n}\]

\noindent can be also connected in $\{S_k<S_k(\gamma_j^n)\}$. Moreover, the sets $\mathcal W_{j,n}$ can be chosen to be so small that their closures are pairwise disjoint. If $\alpha_{k_*}$ is not contractible we can find $n\in\N$ such that $\alpha_{k_*}^n$ is not freely homotopic to $\gamma_j^m$, for every $1\leq j\leq l$ and $1\leq m\leq n_*-1$. If $\alpha_{k_*}$ is contractible, set 
\[a:= \ \min_{1\leq j \leq l} \ S_k(\gamma_j^{n_*-1}) \, .\]

\noindent By \eqref{uniformity} we can find a natural number $n\geq n_0$ such that $c_n(k)<a$. In any case, Lemma \ref{lemma7.1} implies that 
the value $c_n(k)$ is a critical value of $S_k$ restricted to $\Lambda_{\nu^n_*}$. Thanks to our finiteness assumption 
\[\operatorname{crit}\, S_k\big|_{\Lambda_{\nu^n_*}} \cap \big \{S_k = c_n(k)\big \} \ = \ \T\cdot \gamma_{j_1}^{m_1} \cup \ldots \cup \T\cdot \gamma_{j_r}^{m_r}\]

\noindent for some non-empty set $\{j_1,...,j_r\}\subseteq \{1,...,l\}$ and some positive integers $m_1,...,m_r$. By our choice of $n$, all the $m_i$'s are at least $n_*$. Now we apply Lemma \ref{lemma7.1} with 
\[\mathcal V := \ \mathcal W_{j_1,m_1} \cup \ldots \cup \mathcal W_{j_r,m_r}\]

\noindent and we obtain a path $v\in \mathcal P_n(k)$ with image contained in 
\[\big \{S_k<c_n(k)\big \} \cup \mathcal V\]

\noindent and such that $S_k(v(0))<c_n(k)$. Since also $S_k(v(1))=S_k(\mu^n)<c_n(k)$, and since the sets $\mathcal W_{j_i,m_i}$ have pairwise disjoint closures, the path $v$ is the concatenation of finitely many paths, each of which has 
end-points in $\{S_k<c_n(k)\}$ and is contained in 
\[\big \{S_k<c_n(k)\big \} \cup \mathcal W_{j_i,m_i}\, ,\]
for some $i\in \{1,...,r\}$. By the property of the sets $\mathcal W_{j_i,m_i}$ stated above, the end-points of each of these paths can be joined by paths $w$ in $\{S_k<c_n(k)\}$ and hence by concatenating the $w$'s we get a path in $\{S_k<c_n(k)\}$ which joins $u(0)$ to $u(1)$. Since such a path belongs to $\mathcal P_n(k)$, this contradicts the definition of $c_n(k)$.
\end{proof}

\bibliographystyle{amsalpha}
\bibliography{imbiblio_2}

\end{document}